\newtheorem{thm}{Theorem}[section]
\newtheorem{propo}[thm]{Proposition}
\newtheorem{lemme}[thm]{Lemma}
\theoremstyle{definition}
\newtheorem{de}{Definition}[section]
\theoremstyle{remark}
\DeclareMathOperator{\supp}{supp}
\DeclareMathOperator{\im}{Im}
\DeclareMathOperator{\id}{id}
\DeclareMathOperator{\Lip}{Lip}
\DeclareMathOperator{\Hom}{Hom}
\DeclareMathOperator{\Lin}{\mathcal{L}}
\DeclareMathOperator{\Vertlift}{Vert}
\newcommand{\Cc}{C^1_c(N,\R)}
\newcommand{\Cd}{C^1(N,\R)}
\newcommand{\Lipk}{\mathrm{Lip}(N,\R^k)}
\newcommand{\Lr}{L^p(M)}
\newcommand{\Wp}{\dot{W}^{1,p}(M,N)}
\newcommand{\dW}{\delta_{1,p}}
\newcommand{\dTN}{d}
\newcommand{\dM}{d_M}
\newcommand{\dE}{d_E}
\newcommand{\gF}{g^*_M\otimes g_N}
\newcommand{\gFk}{g^*_M\otimes g_k}
\providecommand{\psh}[2]{\ensuremath{\langle #1,#2\rangle}}
\providecommand{\abs}[1]{\lvert#1\rvert}
\newcommand{\bigabs}[1]{\bigl\lvert #1 \bigr\rvert}
\newcommand{\Bigabs}[1]{\Bigl\lvert #1 \Bigr\rvert}
\providecommand{\norm}[1]{\left\lVert#1\right\rVert}
\newcommand{\R}{\mathbb{R}}
\newcommand{\N}{\mathbb{N}}
\title{Intrinsic colocal weak derivatives and Sobolev spaces between manifolds}
\author{Alexandra Convent}
\address{
  Institut de Recherche en Math\'ematique et Physique\\
  Universit{\'e} catholique de Louvain\\
  Chemin du Cyclotron 2 bte L7.01.01, 1348 Louvain-la-Neuve, Belgium}
\email{Alexandra.Convent@uclouvain.be}
\author{Jean Van Schaftingen } %\footnote{}
\address{
  Institut de Recherche en Math\'ematique et Physique\\
  Universit{\'e} catholique de Louvain\\
  Chemin du Cyclotron 2 bte L7.01.01, 1348 Louvain-la-Neuve, Belgium}
\email{Jean.VanSchaftingen@uclouvain.be}
\subjclass[2010]{58D15 (46E35, 46T10, 53C25, 58E20)}
\keywords{Sobolev space; weak derivative; approximate differentiability; truncation; weak convergence; strong convergence; metric space; complete distance; Riemannian manifold; natural metric on the tangent bundle; Sasaki metric; Cheeger--Gromoll metric; concordant distance; concordant metric.}
\begin{document}

%ABSTRACT
\begin{abstract}
We define the notion of colocally weakly differentiable maps from a manifold \(M\) to a manifold \(N\). If \(p \ge 1\) and \(M\) and \(N\) are endowed with a Riemannian metric, this allows us to define intrinsically the homogeneous Sobolev space \(\dot{W}^{1, p} (M, N)\). This new definition is equivalent with the definition by embedding in the Euclidean space and with the definition of Sobolev maps into a metric space. The colocal weak derivative is an approximate derivative. The colocal weak differentiability is stable under the suitable weak convergence. The Sobolev spaces can be endowed with various intrinsinc distances that induce the same topology and for which the space is complete.
\end{abstract}

\maketitle

\tableofcontents

%%%%% INTRO -------------------------------------------------------------
\section*{Introduction}
 
Sobolev spaces between manifolds are a natural tool to study variational problems for maps between manifolds, arising in geometry  \citelist{\cite{eels_lemaire}\cite{urakawa}\cite{linwang}} or in nonlinear physical models \citelist{\cite{ball}\cite{hardt}\cite{breziscoronlieb}}.
A Sobolev space of maps between the manifolds \(M\) and \(N\) can be defined by \citelist{\cite{hang}\cite{bethuel}\cite{dacorognafonsecamalytrivisa}\cite{breziscoronlieb}\cite{giaquintamodicasoucek}\cite{giaquintamodicasoucek2}\cite{giaquintamodicasoucek1989}\cite{giaquintamodicasoucek1990}\cite{hardt}\cite{mucci}\cite{ball}}
\begin{equation}
\label{def_first}
 \dot{W}^{1,p}(M,N) = \{ u : M \to N : 
 \text{\(\iota \circ u \in W^{1, 1}_{\mathrm{loc}} (M,\R^\nu)\) and \(\abs{D(\iota \circ u)}\in L^p (M)\)}\}.
\end{equation}
where \(\iota : N \to \R^\nu\) is an isometric embedding of the target manifold \(N\) in the Euclidean space \(\R^\nu\).
This definition is always possible, since every Riemannian manifold is isometrically embedded in a Euclidean space \citelist{\cite{nash1954}*{theorem 2}\cite{nash1956}*{theorem 3}}.

Since the embedding \(\iota : N \to \R^\nu\) is not unique, this definition could in principle depend on the choice of the embedding \(\iota\). 
In the particular case where \(\iota_1 : N \to \R^{\nu_1}\) and \(\iota_2 : N \to \R^{\nu_2}\) are embeddings and \(\iota_2 \circ \iota_1^{-1}\) is a bi-Lipschitz map for the induced Euclidean distances --- which is always the case if \(N\) is compact --- it can be seen directly that \(\dot{W}^{1, p} (M, N)\) and its topology are the same.

Such difficulties can be avoided by defining Sobolev spaces into \(N\) by using only the metric structure of \(N\), either by composition with Lipschitz maps \citelist{\cite{ambro}\cite{resh_97}} or by oscillations on balls \citelist{\cite{korevaar}\cite{jost}}. 
These definitions are equivalent to each other \citelist{\cite{jost}\cite{chiron}} and equivalent with the definition by isometric embedding \eqref{def_first} \cite{hajlasz_tyson}*{theorem 2.17}.
They are all intrinsic but they do not have any notion of weak derivative; they only provide a notion of Dirichlet integrand \(\abs{Du}\) which can differ from the one given by isometric embedding and depends on the integrability exponent \(p\) \cite{chiron}.
If \(N\) has a Riemannian structure then one can construct a posteriori an approximate derivative almost everywhere \cite{focardi_spadaro}; in contrast with the classical theory of Sobolev spaces between Euclidean spaces the derivative is a fine property of a function that plays no role in the definition of the Sobolev maps.
Several distances have been proposed for spaces of Sobolev maps between metric spaces,
but the spaces are not complete for any of these distances \cite{chiron}.

\medbreak

The goal of this work is to propose a robust intrinsic definition of Sobolev maps between manifolds in which the weak derivative plays a central role and to endow and well-behaved intrinsic metrics on the space of Sobolev maps.
We shall proceed in three steps: first we shall define a notion of differentiability and derivative, then we shall study the integrability of the derivative and finally we shall endow these spaces with convergence and metrics.
Each of these steps will require additional structure on the manifolds: at the beginning we shall simply use the differentiable structure of the manifolds, then a Riemannian metric on the manifolds and finally a Riemannian metric on their tangent bundles.
Defining the derivative before the space gives immediately the independence of the derivative from the Riemannian metric or the integrability exponent \(p\).
The primary role of the derivative in our approach will be quite handy to define complete intrinsic metrics. 

In the first step we define \emph{colocally weakly differentiable maps} as maps for which \(f \circ u\) is weakly differentiable when \(f \in \Cc\) (definition~\ref{cwdiff}).
The \emph{colocal weak derivative} is defined as the unique morphism of bundles \(Du\) such that the chain rule \(D(f\circ u) = Df \circ Du\) holds (definition~\ref{cwder}):
\[
   \xymatrix{
    TM \ar[r]^{D u} \ar[rd]_{D(f \circ u)} 
    & TN \ar[d]^{Df} \\
     & \R.
  }
\]
The colocal weak derivative has the usual nonlinear properties of a weak derivative; the definition extends previous definitions of the derivatives by truncation \cite{bbggpv}.
The colocal weak derivative is an approximate derivative (proposition~\ref{approx_der}). This follows from the Euclidean counterpart. We recover thus without fine properties of differentiable functions nor any Riemannian structure the derivative of Focardi and Spadaro \cite{focardi_spadaro}.

In the second step, we define when \(M\) and \(N\) are Riemannian manifolds, for every \(p \in [1,\infty]\), the \emph{homogeneous Sobolev space} (definition~\ref{hom_sobolev})
\begin{multline*}
  \dot{W}^{1,p}(M,N) = \{ u \colon M \to N \colon \text{\(u\) is colocally weakly differentiable}\\\text{ and }\abs{Du}_{\gF} \in L^p(M)\},
\end{multline*}
where the Euclidean norm \(\abs{\cdot}_{\gF}\) is induced by the Riemannian metrics on \(M\) and \(N\).
This definition is equivalent with  \eqref{def_first} when \(N\) is isometrically embedded in \(\R^\nu\) (proposition~\ref{equi_isometric}) --- \eqref{def_first} is thus a posteriori an intrinsic definition --- and with the definition of Sobolev spaces into metric spaces (proposition~\ref{thm_equi}). 
Given a colocally weakly differentiable map \(u\colon M \to N\), we characterize the quantity \(\abs{Du}_{\gF}\) as the smallest measurable function \(w \colon M \to \R\) such that for every \(f\in C^1_c(N, \R^{\min(\mathrm{dim}(M), \mathrm{dim}(N))})\),
\begin{equation}\label{NB_metric}
 \abs{D(f\circ u)} \le \abs{f}_{\Lip} \, w \qquad \text{ almost everywhere in } M.
\end{equation}
This allows to define a robust Dirichlet integrand; previous definitions with scalar test function \(f\in C^1_c(N,\R)\) were quite unstable \citelist{\cite{ambro}\cite{resh_97}}. 
Furthermore the inequality \eqref{NB_metric} might provide a robust definition of the Dirichlet integrand for Sobolev maps into metric spaces.

In a third step, we study sequences of colocally weakly differentiable maps (section~\ref{sectconvCWDM}) for which we prove a closure and a compactness property.

In the last step, we first study weakly converging sequences in the Sobolev space. Next, if \(d\) is a distance on the bundle of morphisms from \(TM\) to \(TN\) that satisfies some growth assumptions --- in particular \(d\) could be the distance induced by the Sasaki \cite{sasaki} and Cheeger-Gromoll \cite{cheeger_gromoll} metrics, or by an embedding in a Euclidean space --- we study the distance defined for all \(u,v\in W^{1,p}(M,N)\) as 
\[ \delta_{1,p}(u,v) = \left(\int_M d(Du,Dv)^p \right)^\frac{1}{p}. 
\]
If the Riemannian manifold \(N\) is complete, the Sobolev space is complete under all those distances, but those distances are not equivalent in general (proposition~\ref{not_equivalent}); these distances give the same convergent sequences than embedding in Euclidean spaces and than the noncomplete distances proposed on metric spaces \cite{chiron}.
In particular, we observe that the convergence induced by the natural distance arising from definition~\eqref{def_first} does not depend on the embedding.

%%% WEAK DERIVATIVE ------------------------------------------------
\numberwithin{equation}{section}

\section{Colocally weakly differentiable maps and colocal weak derivative}

\subsection{Weak differentiability on a differentiable manifold}
We assume that \(M\) and \(N\) are differentiable manifolds of dimensions \(m\) and \(n\) which are Hausdorff and have a countable basis \citelist{\cite{docarmo}*{\S 0.5}\cite{hirsch}*{\S 1.5}}.

We recall various definitions of local measure-theoretical notions on a manifold. 
A set \(E \subset M\) is \emph{negligible} if for every \(x \in M\) there exists a local chart \(\psi : V \subseteq M  \to \R^m\) -- that is \(\psi : V \subseteq M \to \psi (V) \subseteq \R^m\) is a diffeomorphism -- such that \(x \in V\) and the set \(\psi (E \cap V) \subset \R^m\) is negligible.
A map \(u : M \to N\) is \emph{measurable} if for every \(x \in M\) there exists a local chart \(\psi : V \subseteq M  \to \R^m\) such that \(x \in V\) and the map \(u \circ \psi^{-1}\) is measurable \citelist{\cite{hirsch}*{\S 3.1}\cite{derham}*{\S 3}}.
A function \(u : M \to \R\) is \emph{locally integrable} if for every \(x \in M\) there exists a local chart \(\psi : V \subseteq M \to \R^m\) such that \(x \in M\) and \(u \circ \psi^{-1}\) is integrable on \(\psi (V)\) \cite{hormander}*{\S 6.3}.
Similarly, a locally integrable map \(u : M \to \R\) is \emph{weakly differentiable} if for every \(x \in M\) there exists a local chart \(\psi : V \subseteq M  \to \R^m\) such that \(x \in V\) and the map \(u \circ \psi^{-1}\) is weakly differentiable. 
All these notions are independent on any particular metric or measure on the manifold \(M\).

A Radon measure \(\mu\) on \(M\) is \emph{absolutely continuous} if for every \(x \in M\) there exists a local chart \(\psi : V \subseteq M  \to \R^m\) such that \(x \in V\) and the image measure \(\psi_* (\mu)\) defined by \(\psi_* (\mu) (A) = \mu (\psi^{-1} (A))\) is absolutely continuous with respect to the Lebesgue measure. 
The measure \(\mu\) is \emph{positive} if for every \(x \in M\) there exists a local chart \(\psi : V \subseteq M  \to \R^m\) such that \(x \in V\) and the Lebesgue measure on \(\psi (V)\) is absolutely continuous with respect to \(\psi_* (\mu) (A) = \mu (\psi^{-1} (A))\) \cite{katokhasselblatt}*{definition 5.1.1}. Every absolutely continuous measure on \(M\) is absolutely continuous with respect to every positive measure.
A set \(E \subset M\) is negligible if and only if \(\mu (E) = 0\) for every positive absolutely continuous Radon measure \(\mu\). A measurable function \(u : M \to \R\) is locally integrable if and only if there exists a positive absolutely continuous Radon measure \(\mu\) such that \(\int_{M} \abs{u} \, d \mu < \infty\).

\medbreak

We first define the notion of colocally weakly differentiable map.

%DEF WEAKLY DIFF
\begin{de}\label{cwdiff}
A map \(u \colon M \to N\) is \emph{colocally weakly differentiable} if \(u\) is measurable and for every \( f \in \Cc\), \(f \circ u \) is weakly differentiable.  
\end{de}

When \(N=\R\), the space of colocally weakly differentiable functions is the space of Sobolev functions by truncations \(\mathcal{T}^{1, 1}_{\mathrm{loc}} (M)\) \cite{bbggpv}.
If \(u : \R^m \to \R^n\) is weakly differentiable, then \(u\) is colocally weakly differentiable. The converse is false: for example, the function \(u\colon \R^m \to \R\) defined for every \(x \in \R^m \setminus \{0\}\) by \(u(x) = \abs{x}^{-\alpha}\) is not weakly differentiable for any \(\alpha > m - 1\), but is colocally weakly differentiable for every \(\alpha \in \R\). 

In order to define the colocal weak derivative, we denote by \((TM, \pi_M, M)\) the \emph{tangent bundle} over \(M\), that is,
\[
  TM= \bigcup_{x\in M} \{x\} \times T_x M ,
\]
\(\pi_M \colon TM \to M\) is the natural projection and for every \(x\in M\), the fiber \(\pi_M^{-1} (\{x\})\) is isomorphic to \(\R^m\);
a map \( \upsilon \colon TM  \to TN \) is a \emph{bundle morphism} that covers \(u \colon M \to N\) if 
\[
 \xymatrix{
    TM \ar[r]^\upsilon \ar[d]_{\pi_M} & TN \ar[d]^{\pi_N} \\
    M \ar[r]_u & N
  }
\]
commutes, that is, \(\pi_N \circ \upsilon = u \circ \pi_M\), and for every \(x \in M\), \(\upsilon(x) \colon \pi^{-1}_M(\{x\}) \to \pi^{-1}_N(\{u(x)\})\) is linear.
The space of all bundle morphisms  is denoted by \(\Hom(TM,TN)\).
In particular, if \(u\colon M \to N\) is a differentiable map, then \(Du \colon TM \to TN\) is a bundle morphism that covers \(u\). 
By a direct covering argument, if \(u : M \to \R\) is weakly differentiable, then there exists a bundle morphism \(Du : TM \to \R\) such that for every local chart \(\psi : V \subseteq M \to \R^m\), 
\[
  D (u \circ \psi^{-1}) = Du \circ D \psi^{-1}
\]
almost everywhere on \(\psi (V)\).
We have now all the ingredients to define the colocal weak derivative.

%DEF weakly differentiable 
\begin{de}\label{cwder}
Let \(u\colon M \to N\) be a colocally weakly differentiable map. A map \(Du\colon TM \to TN\) is a \emph{colocal weak derivative} of \(u\) if \(Du\) is a measurable bundle morphism that covers \(u\) 
%that is, \(Du \colon M \to \Lin(TM,TN)\) is measurable, 
and for every \(f\in \Cc\),  
\[D(f\circ u ) = Df \circ Du \] 
almost everywhere in \(M\).
\end{de}
Consequently, if \(Du\) is a colocal weak derivative of \(u\), for almost every \(x\in M\), \(Du(x) \in \Lin(T_x M, T_{u(x)}N)\) and for each \(e \in T_x M\), \(D(f\circ u)(x)[e]= Df(u(x))[Du(x)[e]]\).

We first observe that this notion extends the notion of classical differentiability:

\begin{propo}[Equivalence of classical and colocal weak derivatives]
Let \(u \in C(M,N)\). Then \(u\) has a continuous colocal weak derivative if and only if  \(u\in C^1(M,N)\).
Moreover, the colocal weak derivative and the classical derivative coincide almost everywhere.
\end{propo}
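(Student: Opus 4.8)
The plan is to work locally in charts on $M$ and coordinate neighborhoods on $N$, reducing the statement to the Euclidean case, where the corresponding fact is standard: a continuous map with a continuous weak derivative is $C^1$. The implication from $C^1(M,N)$ to the existence of a continuous colocal weak derivative is immediate, since for $u \in C^1(M,N)$ the classical differential $Du \colon TM \to TN$ is a continuous bundle morphism covering $u$, and the classical chain rule gives $D(f \circ u) = Df \circ Du$ for every $f \in \Cc$; by uniqueness of the colocal weak derivative (which follows from Definition~\ref{cwder} applied to enough test functions, e.g.\ localized coordinate functions on $N$) this $Du$ is \emph{the} colocal weak derivative, and it coincides with the classical derivative everywhere, in particular almost everywhere.

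For the converse, suppose $u \in C(M,N)$ has a continuous colocal weak derivative $Du$. Fix $x_0 \in M$; by continuity of $u$ choose a chart $\varphi \colon W \subseteq N \to \R^n$ around $u(x_0)$ and a chart $\psi \colon V \subseteq M \to \R^m$ around $x_0$ with $u(V) \subseteq W$. Write $\varphi = (\varphi^1,\dots,\varphi^n)$ and pick cutoff functions $\chi_j \in \Cc$ that agree with $\varphi^j$ on a neighborhood of $\overline{u(V)}$ (shrinking $V$ if necessary so that $\overline{u(V)}$ is compact inside $W$). Then each $\chi_j \circ u$ is weakly differentiable on $M$, hence $(\chi_j \circ u) \circ \psi^{-1}$ is weakly differentiable on $\psi(V) \subseteq \R^m$, and its weak derivative equals $D(\chi_j \circ u) \circ D\psi^{-1} = D\chi_j \circ Du \circ D\psi^{-1}$ by Definition~\ref{cwder}, which is continuous because $Du$, $D\chi_j$ and $D\psi^{-1}$ are. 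Therefore each component of the Euclidean map $\varphi \circ u \circ \psi^{-1}$ is a continuous function on an open subset of $\R^m$ with continuous weak derivative, hence is $C^1$ there. Consequently $\varphi \circ u \circ \psi^{-1} \in C^1(\psi(V), \R^n)$, which means $u$ is $C^1$ in a neighborhood of $x_0$; as $x_0$ was arbitrary, $u \in C^1(M,N)$. Finally, for $u \in C^1(M,N)$ the classical derivative is a continuous bundle morphism satisfying the chain rule, so by uniqueness it is the colocal weak derivative, giving the last assertion.

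The only mildly delicate point is the reduction to coordinates: one must ensure the cutoff test functions $\chi_j$ are genuinely in $\Cc = C^1_c(N,\R)$ and coincide with the coordinate functions $\varphi^j$ on the relevant compact set, which is a routine application of a partition of unity / Urysohn-type construction on $N$ once $V$ is chosen small enough that $\overline{u(V)}$ is a compact subset of the coordinate domain $W$. Everything else is the transfer through charts of the classical scalar statement ``continuous $+$ continuous weak derivative $\Rightarrow$ $C^1$'' and the uniqueness of the colocal weak derivative, neither of which presents a real obstacle.
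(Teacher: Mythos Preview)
Your proof is correct and follows essentially the same approach as the paper: both reduce to the Du Bois-Reymond lemma (a continuous function with a continuous weak derivative is \(C^1\)) applied to \(f\circ u\) in local charts on \(M\). The paper's version is terser---it simply says that \(f\circ u \in C^1(M,\R)\) for every \(f\in \Cc\) and hence \(u\) is \(C^1\)---while you spell out explicitly the choice of cutoff coordinate functions \(\chi_j\) that make this conclusion go through; your more detailed account is a faithful expansion of the same argument.
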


\begin{proof}
Since for every \(f\in \Cc\), \(f\circ u\) is weakly differentiable, we can apply the equivalence of classical and weak derivatives (Du Bois-Reymond lemma) \citelist{\cite{willem_en}*{theorem 6.1.4}\cite{lieb_loss}*{theorem 6.10}} and local charts on \(M\) to obtain that \(f\circ u \in C^1(M,\R)\). Since \(f\) is arbitrary, the map \(u\) is continuously differentiable. 
\end{proof}

\begin{de}
A bundle morphism \( \upsilon : TM \to TN\) that covers \(u : M \to N\) is \emph{bilocally integrable} on \(A \subseteq M\) if there exist local charts \(\psi : V \subseteq M \to \R^m\), \(\varphi : U \subseteq N \to \R^n\) such that if \(L \subset V\) and \(K \subset U\) are compact, then the function
\(D\varphi \circ \upsilon_{\vert V} \circ D(\psi^{-1})\) is integrable on \(\psi( A \cap L \cap u^{-1} (K))\).
\end{de}

If \(\mu\) is a positive absolutely continuous measure on \(M\), the morphism \(\upsilon\) is bilocally integrable if and only if there exists a continuous norm \(\abs{\cdot}\) on \(T^*M \otimes TN\) such that \(\int_{M} \abs{\upsilon}\,d\mu < \infty\).

The main result of the current section is that colocally weakly differentiable maps have a colocal weak derivative.

%THM WEAK DIFF
\begin{propo}[Existence and uniqueness of the colocal weak derivative]\label{thm_weak}
If the map \(u\colon M \to N\) is colocally weakly differentiable, then \(u\) has a unique colocal weak derivative \(Du \colon TM \to TN\). Moreover, the bundle morphism \(Du\) is bilocally integrable and for every \(f\in C^1(N,\R)\) for which \(f\circ u : M \to \R\) is weakly differentiable,
\begin{equation*}
 D(f\circ u ) = D f \circ D u.
\end{equation*}
\end{propo}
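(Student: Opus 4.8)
The strategy is to transport everything to the Euclidean theory of scalar weakly differentiable functions by local charts. The only substantial facts I borrow from that theory --- both standard in the truncation approach \cite{bbggpv} --- are: (i) if $g,h\in W^{1,1}_{\mathrm{loc}}(\Omega)$ with $\Omega\subseteq\R^m$ open agree almost everywhere on a measurable set $A\subseteq\Omega$, then $Dg=Dh$ almost everywhere on $A$; and (ii) if $F\in C^1(\R^n,\R)$ has bounded derivative and $g\in W^{1,1}_{\mathrm{loc}}(\Omega,\R^n)$, then $F\circ g\in W^{1,1}_{\mathrm{loc}}(\Omega)$ with $D(F\circ g)=DF(g)\,Dg$ almost everywhere.

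I would prove \emph{uniqueness} first, since the same computation also glues the local constructions in the existence part. Fix a chart $\varphi=(\varphi^1,\dots,\varphi^n)\colon U\subseteq N\to\R^n$, a point $y\in U$ and a cut-off $\chi\in C^1_c(U,\R)$ with $\chi=1$ on an open neighbourhood $U'$ of $y$ with $\overline{U'}\subset U$; then $\chi\varphi^i\in\Cc$, so any colocal weak derivative satisfies $D((\chi\varphi^i)\circ u)=D\varphi^i(u)\circ Du$ almost everywhere on $M$, which on $u^{-1}(U')$ reads $D(\varphi^i\circ u)=D\varphi^i(u)\circ Du$ (after a chart on $M$). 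Because $D\varphi(z)\colon T_zN\to\R^n$ is a linear isomorphism for $z\in U$, these $n$ identities pin down $Du(x)$ for almost every $x\in u^{-1}(U')$; covering $N$ by countably many such $U'$ gives uniqueness on all of $M$. The same argument applied with coordinate functions of a chart supported in an overlap shows that any two morphisms satisfying the chain rule on a common set coincide there.

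For \emph{existence}, after composing with a chart on $M$ I may assume $M$ is an open set $\Omega\subseteq\R^m$; for $g$ defined on a subset of $\Omega$, write $\overline g$ for its extension by zero to $\Omega$. Fix a chart $\varphi\colon U\subseteq N\to\R^n$, set $v:=\varphi\circ u$ on the measurable set $A:=u^{-1}(U)$, exhaust $\varphi(U)$ by compact sets $K_j\subset\operatorname{int}K_{j+1}$, and choose $f^i_j\in C^1_c(\varphi(U),\R)$ equal to the $i$-th coordinate on $K_j$ together with $\chi_j\in C^1_c(U,\R)$ equal to $1$ on $\varphi^{-1}(K_j)$. Applying the hypothesis to $(f^i_j\circ\varphi)\chi_j\in\Cc$ shows $\overline{f^i_j\circ v}\in W^{1,1}_{\mathrm{loc}}(\Omega)$; on the increasing sets $A_j:=v^{-1}(K_j)$ it equals the $i$-th component $v^i$ of $v$, and for $j\le j'$ the functions for $j$ and $j'$ agree on $A_j$, so by (i) their weak derivatives glue to a measurable $w=(w^1,\dots,w^n)\colon A\to\R^{n\times m}$. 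Comparing on each $A_j$ the functions $\overline{f\circ v}$ and $f\circ(\overline{f^1_j\circ v},\dots,\overline{f^n_j\circ v})$ (extending $f$ by zero to $\R^n$), fact (ii) together with (i) gives $D\overline{f\circ v}=Df(v)\,w$ almost everywhere on $A$ for every $f\in C^1_c(\varphi(U),\R)$. Transporting back through $\varphi$, $Du:=(D\varphi(u))^{-1}\circ w$ is a measurable bundle morphism on $u^{-1}(U)$ with $D(f\circ u)=Df\circ Du$ almost everywhere for every $f\in\Cc$ supported in $U$, and it is bilocally integrable because on $u^{-1}(K)$ with $K\subset U$ compact, $w$ coincides with the weak derivative of a single fixed $W^{1,1}_{\mathrm{loc}}$ function. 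By the uniqueness computation these local morphisms agree on overlaps of the countably many such sets covering $M$, hence glue to a global measurable bundle morphism $Du$.

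It remains to upgrade the chain rule. Given $f\in C^1(N,\R)$ with $f\circ u$ weakly differentiable --- in particular any $f\in\Cc$, which removes the support restriction --- fix $y\in N$, a chart $U\ni y$ and $\chi\in C^1_c(U,\R)$ with $\chi=1$ on $U'\ni y$. Then $\chi f\in\Cc$ is supported in $U$, so $D((\chi f)\circ u)=D(\chi f)\circ Du$ almost everywhere; since $(\chi f)\circ u=f\circ u$ on $u^{-1}(U')$ and $D(\chi f)(z)=Df(z)$ for $z\in U'$, fact (i) gives $D(f\circ u)=Df\circ Du$ almost everywhere on $u^{-1}(U')$, and covering $N$ finishes the proof. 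The main obstacle throughout is that $u$ is merely measurable, so $u^{-1}(U)$ need not be open and one cannot simply restrict to an open subset; this is precisely what forces the cut-offs $\chi_j$, the exhaustion $K_j$, and the repeated use of (i) --- the agreement of weak derivatives on measurable sets where the functions agree.
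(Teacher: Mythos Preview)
Your proof is correct. Both arguments follow the same skeleton --- construct \(Du\) chart by chart on \(N\), show the chain rule locally, and glue by uniqueness --- but the paper packages the cut-off work into a single device, the \emph{extended local chart} (lemma~\ref{lemme_diffeo}): a globally defined \(\varphi\in C^1_c(N,\R^n)\) that is a genuine chart on a neighbourhood \(U\), together with a \(C^1\) left inverse \(\varphi^*\colon\R^n\to N\) that is eventually constant. This buys two simplifications over your route. First, since each component of \(\varphi\) already lies in \(C^1_c(N,\R)\), the map \(\varphi\circ u\) is weakly differentiable on all of \(M\) at once, so there is no need for your exhaustion \(K_j\), the auxiliary \(f^i_j\), or the gluing of \(w\) via fact~(i). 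Second, the chain rule for a general \(f\in C^1(N,\R)\) with \(f\circ u\) weakly differentiable comes in one stroke from the identity \(f\circ u = (f\circ\varphi^*)\circ(\varphi\circ u)\) on \(u^{-1}(U)\) and the Euclidean chain rule applied to \(f\circ\varphi^*\) (which is \(C^1\) with bounded derivative because \(\varphi^*\) is eventually constant), rather than from a separate localisation with \(\chi\). Your approach is more elementary in that it avoids constructing \(\varphi^*\), at the cost of the compact exhaustion and repeated appeals to fact~(i); the paper's is shorter and yields a tool (the extended chart) that is reused throughout the paper. One minor remark: your cut-off \(\chi_j\) is in fact redundant, since \(f^i_j\circ\varphi\) already extends by zero to a function in \(C^1_c(N,\R)\).
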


This result was already known when \(N = \R\) \cite{bbggpv}*{lemma 2.1}; as remarked there, the colocal weak derivative need not be locally integrable.
The important geometric tool for proving proposition~\ref{thm_weak} is the existence of extended local charts. This construction is reminiscent of the patch mappings to the sphere \(\varphi \in C^1 (N, \mathbb{S}^n)\) \cite{nash1956}*{p. 60}.

%LEMME U, varphi, varpi^*
\begin{lemme}[Extended local charts]\label{lemme_diffeo}
For every \(y \in N\), there exist an open subset \(U\subseteq N\) such that \(y\in U\), and maps \(\varphi \in C^1(N,\R^n)\) and \(\varphi^* \in C^1(\R^n,N)\) such that 
\begin{enumerate}[(i)]
  \item the set \(\supp \varphi\) is compact,
  \item the set \(\overline{\{ x \in \R^n \colon \varphi^*(x) \neq \varphi^*(\varphi(y))\}}\) is compact,
  \item the map \(\varphi_{\vert U}\) is a diffeomorphism onto its image \(\varphi(U)\),
  \item \(\varphi^* \circ \varphi= \id\) in \(U\).
\end{enumerate}
\end{lemme}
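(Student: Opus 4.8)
The plan is to build both maps from a single coordinate chart around \(y\) together with one smooth cut-off function. Fix a local chart \(\psi \colon W \subseteq N \to \R^n\) with \(y \in W\) and, after composing with a translation, \(\psi(y) = 0\). Since \(\psi(W)\) is open and contains \(0\), choose radii \(0 < r_1 < \rho < r_2\) with \(\overline{B(0,r_2)} \subseteq \psi(W)\), and pick \(\lambda \in C^\infty_c(\R^n)\) with \(0 \le \lambda \le 1\), \(\lambda = 1\) on \(\overline{B(0,r_1)}\) and \(\supp \lambda \subseteq \overline{B(0,\rho)}\). I would then set
\[
\varphi(z) =
\begin{cases}
\lambda(\psi(z))\,\psi(z) & \text{if } z \in W,\\[1mm]
0 & \text{if } z \in N \setminus \psi^{-1}(\overline{B(0,\rho)}),
\end{cases}
\]
which is unambiguous and of class \(C^1\): the two formulas agree (both vanish) on the overlap because \(\lambda \circ \psi\) vanishes outside \(\psi^{-1}(\supp \lambda)\), the two open sets cover \(N\) since \(\psi^{-1}(\overline{B(0,\rho)}) \subseteq W\), and each piece is \(C^1\); moreover \(\supp \varphi \subseteq \psi^{-1}(\overline{B(0,\rho)})\), which is compact, giving (i). For the inverse map, set \(g(x) = \lambda(x)\,x\) and \(\varphi^* = \psi^{-1}\circ g\). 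Here \(g \in C^\infty(\R^n,\R^n)\), \(g \equiv 0\) outside \(\overline{B(0,\rho)}\), and \(\abs{g(x)} \le \abs{x}\), so \(g(x) \in \overline{B(0,\rho)} \subseteq \psi(W)\) for every \(x\); hence \(\varphi^* \in C^1(\R^n,N)\) is well defined.

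It then remains to verify the four properties. I would take \(U = \psi^{-1}(B(0,r_1))\), an open neighbourhood of \(y\) on which \(\lambda \circ \psi \equiv 1\), so that \(\varphi = \psi\) on \(U\); thus \(\varphi_{\vert U}\) is a diffeomorphism onto \(\varphi(U) = B(0,r_1)\), which is (iii). Since \(\lambda(0) = 1\) one gets \(\varphi(y) = \psi(y) = 0\) and \(\varphi^*(\varphi(y)) = \psi^{-1}(g(0)) = \psi^{-1}(0) = y\); as \(\varphi^*(x) = \psi^{-1}(0) = y\) whenever \(x \notin \overline{B(0,\rho)}\), the set \(\{x : \varphi^*(x) \neq \varphi^*(\varphi(y))\}\) is contained in \(\overline{B(0,\rho)}\), so its closure is compact, which is (ii). Finally, for \(z \in U\) one has \(\varphi(z) = \psi(z) \in B(0,r_1)\), hence \(g(\varphi(z)) = \lambda(\psi(z))\,\psi(z) = \psi(z)\) and \(\varphi^*(\varphi(z)) = \psi^{-1}(\psi(z)) = z\), which is (iv).

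The routine-but-delicate point, and the one I would be most careful about, is the \(C^1\) gluing: one must check that the piecewise definition of \(\varphi\) produces a genuinely \(C^1\) map on all of \(N\) (not merely a continuous one), which is exactly where the nesting \(r_1 < \rho < r_2\), the choice \(\supp \lambda \subseteq \overline{B(0,\rho)}\), and \(\overline{B(0,r_2)} \subseteq \psi(W)\) are used; symmetrically, one must check that \(g\) never leaves \(\overline{B(0,\rho)}\), so that \(\psi^{-1}\circ g\) is defined and \(C^1\) on all of \(\R^n\). No genuine geometric difficulty arises: everything is local around \(y\), and the construction is a quantitative version of ``trivialize in a chart and damp the transition down to a single point'', in the spirit of the patch maps of Nash cited above.
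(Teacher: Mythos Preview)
Your proof is correct and follows essentially the same construction as the paper's: both take a local chart \(\psi\) centered at \(y\), multiply by a radial cut-off \(\theta\) (your \(\lambda\)) to define \(\varphi(z)=\theta(\psi(z))\psi(z)\) and \(\varphi^*(x)=\psi^{-1}(\theta(x)x)\), and set \(U=\psi^{-1}(B_r)\). The only cosmetic difference is that you introduce three nested radii \(r_1<\rho<r_2\) where the paper uses two (\(r\) and \(2r\)); your write-up is somewhat more explicit about why the gluing yields a \(C^1\) map and why \(g\) stays in the chart domain, but the underlying argument is identical.
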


\begin{proof}
By definition of differentiable manifold, there exists a local chart \(\psi : V \subseteq N \to \psi(V) \subseteq \R^n\) around \(y \in N\). 
Without loss of generality, we assume that \(\psi(y) = 0\). 
Since the set \(\psi (V) \subseteq \R^n\) is open, there exists \(r>0\) such that \(B_{2 r} \subseteq \psi(V)\). We choose a map \(\theta \in C^1_c(\R^n,\R)\) such that \(0\le \theta \le 1\) on \(\R^n\), \(\theta = 1\) on \(B_r\) and \(\supp(\theta) \subset B_{2 r}\). 
We take the set \(U = (\psi_{\arrowvert V})^{-1}(B_r)\) and the maps \(\varphi \colon N \to \R^n\) defined for every \(z \in N\) by
\begin{equation*}
\varphi(z)=
\begin{cases}
\theta(\psi(z)) \psi(z) & \text{ if } z \in (\psi_{\arrowvert V})^{-1}(B_{2 r}), \\
0 & \text{ otherwise }\\
\end{cases}
\end{equation*}
and \(\varphi^*\colon \R^n \to N\) defined for every \(x\in \R^n\) by \(\varphi^*(x) = (\psi_{\arrowvert V})^{-1}(\theta(x) \,x).\)
\end{proof}

We begin by proving a local counterpart of proposition~\ref{thm_weak}. 

%LEMME FORMULA LOCALLY
\begin{lemme}\label{lemme_formula}
If \(u \colon M \to N\) is a colocally weakly differentiable map and \(y\in N\), then there exist an open subset \(U \subseteq N\) such that \(y\in U\) 
and a unique measurable bundle morphism  \(D_U u : \pi_M^{-1} (u^{-1} (U)) \to TN\) such that for every \(f\in C^1(N,\R)\) for which \(f\circ u : M \to \R\) is weakly differentiable,
\begin{equation*}\label{form_L}
 D(f\circ u ) = D f \circ D_U u \qquad \text{ almost everywhere on } u^{-1}(U).
\end{equation*}
Moreover, \(D_U u\) is bilocally integrable on \(u^{-1} (U)\).
\end{lemme}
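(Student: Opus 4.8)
The plan is to build $D_U u$ explicitly from an extended local chart around $y$ and to reduce the required chain rule to the Euclidean chain rule for a Lipschitz $C^1$ function composed with a weakly differentiable map. First I would apply Lemma~\ref{lemme_diffeo} to obtain an open set $U \ni y$, a map $\varphi = (\varphi_1, \dots, \varphi_n) \in C^1(N,\R^n)$ with $\supp \varphi$ compact, and $\varphi^* \in C^1(\R^n, N)$ with $\varphi^* \circ \varphi = \id$ on $U$, with $\varphi_{\vert U}$ a diffeomorphism onto $\varphi(U)$, and with $\varphi^*$ constant, equal to $y$, outside a compact set. Each $\varphi_j \in \Cc$, so by colocal weak differentiability each $\varphi_j \circ u$ is weakly differentiable, hence so is $\varphi \circ u \colon M \to \R^n$, with weak derivative $D(\varphi\circ u) = (D(\varphi_1\circ u), \dots, D(\varphi_n\circ u))$. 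Since $\varphi_{\vert U}$ is a diffeomorphism, $D\varphi(u(x)) \colon T_{u(x)}N \to \R^n$ is an isomorphism whenever $x \in u^{-1}(U)$, and differentiating $\varphi^*\circ\varphi = \id$ at points of $U$ gives $D\varphi^*(\varphi(\cdot)) = D\varphi(\cdot)^{-1}$ there. I would then set, for $x\in u^{-1}(U)$,
\[
 D_U u(x) := D\varphi^*(\varphi(u(x))) \circ D(\varphi\circ u)(x) = D\varphi(u(x))^{-1} \circ D(\varphi\circ u)(x).
\]
By property~(iv), $\varphi^*(\varphi(u(x))) = u(x)$, so $D_U u(x)\in\Lin(T_xM, T_{u(x)}N)$, and $D_U u$ is a bundle morphism covering $u$ on $\pi_M^{-1}(u^{-1}(U))$; it is measurable because $u$ is measurable, $D\varphi^*$ is continuous and $D(\varphi\circ u)$ is measurable.

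For the chain rule, let $f \in C^1(N,\R)$ be such that $f\circ u$ is weakly differentiable, and set $h := f\circ\varphi^* \in C^1(\R^n,\R)$. Since $\varphi^*$ is constant equal to $y$ outside a compact set by property~(ii) (together with (iv)), the function $h - f(y)$ has compact support, so $h$ is bounded and Lipschitz on $\R^n$. Applying the Euclidean chain rule for a bounded Lipschitz $C^1$ function composed with a $W^{1,1}_{\mathrm{loc}}$ map in local charts on $M$, the map $h\circ(\varphi\circ u)$ is weakly differentiable on $M$, with $D(h\circ(\varphi\circ u)) = Dh(\varphi\circ u)\circ D(\varphi\circ u)$ almost everywhere. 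On $u^{-1}(U)$ one has $f\circ u = h\circ(\varphi\circ u)$ because $\varphi^*\circ\varphi = \id$ there, and since both $f\circ u$ and $h\circ(\varphi\circ u)$ are weakly differentiable, their weak derivatives coincide almost everywhere on the measurable set $u^{-1}(U)$ by the locality of the weak derivative (the weak derivative of a $W^{1,1}_{\mathrm{loc}}$ function vanishes almost everywhere on a set where the function is constant, applied to the difference in charts). Finally the classical chain rule for $h = f\circ\varphi^*$ gives $Dh(\varphi(u(x))) = Df(u(x))\circ D\varphi^*(\varphi(u(x)))$, whence $D(f\circ u)(x) = Df(u(x))\circ D\varphi^*(\varphi(u(x)))\circ D(\varphi\circ u)(x) = Df(u(x))\circ D_U u(x)$ for almost every $x\in u^{-1}(U)$, as claimed.

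For uniqueness, any measurable bundle morphism with the stated property must in particular satisfy $D(\varphi_j\circ u) = D\varphi_j\circ D_U u$ almost everywhere on $u^{-1}(U)$ for each $j$, that is $D(\varphi\circ u) = D\varphi(u(\cdot))\circ D_U u$; since $D\varphi(u(x))$ is invertible for $x\in u^{-1}(U)$, this determines $D_U u$ almost everywhere. For bilocal integrability on $u^{-1}(U)$ I would take $\varphi_{\vert U}$ itself as the chart on $N$ and any local chart $\psi$ on $M$: then $D\varphi\circ D_U u = D(\varphi\circ u)$ on $u^{-1}(U)$, whose components $D(\varphi_j\circ u)$ are locally integrable when read in the chart $\psi$ since $\varphi_j\circ u$ is weakly differentiable, so $D\varphi\circ (D_U u)_{\vert V}\circ D\psi^{-1}$ is integrable on $\psi(L)$, hence on $\psi(u^{-1}(U)\cap L\cap u^{-1}(K))$, for all compact $L$ and $K$.

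The step I expect to be the main obstacle is the chain rule itself, namely passing from the hypothesis that $f\circ u$ is merely weakly differentiable to the identity $D(f\circ u) = Df\circ D_U u$ on $u^{-1}(U)$, and doing so for $f$ that need not be compactly supported. The device that makes this work is precomposition with the retraction $\varphi^*$ furnished by Lemma~\ref{lemme_diffeo}: it is precisely the fact that $\varphi^*$ is eventually constant that forces $f\circ\varphi^*$ to be Lipschitz, which brings the problem into the reach of the Euclidean chain rule, after which the locality of weak derivatives transfers the identity from $M$ to the sublevel-type set $u^{-1}(U)$.
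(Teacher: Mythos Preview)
Your proof is correct and follows essentially the same route as the paper's: build $D_U u$ from the extended local chart of Lemma~\ref{lemme_diffeo} by inverting $D\varphi$, reduce the chain rule for an arbitrary $f\in C^1(N,\R)$ to the Euclidean chain rule via the compactly-varying retraction $\varphi^*$ (so that $f\circ\varphi^*$ is Lipschitz), and then use locality of the weak derivative on $u^{-1}(U)$. Your write-up adds the uniqueness and bilocal integrability arguments that the paper leaves implicit.
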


\begin{proof}
Let \(U \subseteq N\), \(\varphi \in C^1(N,\R^n)\) and \(\varphi^* \in C^1(\R^n,N)\) be the extended local charts given by lemma~\ref{lemme_diffeo}.
Since \(u\) is colocally weakly differentiable, the map \(\varphi \circ u : M \to \R^n\) is weakly differentiable. 
Since for every \(x\in u^{-1}(U)\), the linear map between tangent spaces \(D\varphi(u(x)) \colon T_{u(x)}N \to \R^n\) is invertible, the map \(D_U u\) is uniquely defined for almost every \(x\in u^{-1}(U)\) by
\[ 
  D_U u(x)= (D\varphi(u(x))^{-1} \circ (D(\varphi \circ u )(x)). 
\]

If \(f\in \Cd\) and \(f\circ u : M \to \R\) is weakly differentiable, since \(\varphi^* (\R^n)\) is compact and \(f\circ \varphi^* \in C^1 (\R^n,\R)\), the chain rule for weakly differentiable functions implies (see for example \citelist{\cite{willem_en}*{theorem 6.1.13}\cite{evans_gariepy}*{theorem 4.2.4 (ii)}}) that \(f \circ \varphi^* \circ \varphi \circ u\) is weakly differentiable and 
\[
D(f \circ \varphi^* \circ \varphi \circ u) = D(f\circ \varphi^*) \circ D(\varphi \circ u) = D(f\circ \varphi^*)\circ D\varphi \circ D_U u
\]
Since \(f \circ u = f \circ \varphi^* \circ \varphi \circ u\) on \(u^{-1}(U)\) and \(D(f\circ \varphi^*)\circ D\varphi = Df\) on \(U\), we have \citelist{\cite{willem_en}*{corollary 6.1.14}\cite{evans_gariepy}*{theorem 4.2.4 (iv)}\cite{lieb_loss}*{theorem 6.19}} almost everywhere on \(u^{-1}(U)\),
\begin{equation*}
\begin{split}
D(f\circ u) = D(f\circ \varphi^*)\circ D\varphi \circ D_U u = Df \circ D_U u. \qedhere
\end{split}
\end{equation*}
\end{proof}

\begin{proof}[Proof of proposition~\ref{thm_weak}]
Let \((U_i)_{i\in I}\) be an open cover of \(N\) by sets given by lemma~\ref{lemme_formula}. 
Since the manifold \(N\) has a countable basis, we can assume that \(I\) is at most countable \cite{munkres}*{theorem 3.30}.
%compatibility
Let \(D_{U_i} u\) and \(D_{U_i \cap U_j} u\) be the derivatives given by lemma~\ref{lemme_formula}.
By uniqueness, \(D_{U_i\cap U_j} u = D_{U_i} u = D_{U_j} u\) almost everywhere on \(u^{-1}(U_i \cap U_j)\).
Since \(\bigcup_{i\in I} \, u^{-1}(U_i) = M\) and \(I\) is countable, the bundle morphism \(Du : TM \to TN\) can be defined uniquely almost everywhere by
\[ 
  Du = D_{U_i} u \, \text{ almost everywhere on } u^{-1}(U_i).\qedhere
\]
\end{proof} 

\subsection{Properties of the colocal weak derivative}
The colocal weak derivative retains some properties of weak derivatives.

\begin{propo}[Chain rule]
\label{propoChainColocal}
If the map \(u\colon M \to N\) is colocally weakly differentiable and \(f\in C^1 (N, \Tilde{N})\) such that \(f\circ u\) is colocally weakly differentiable, then
\begin{equation*}
 D(f\circ u ) = D f \circ Du \text{ almost everywhere in } M.
\end{equation*}
\end{propo}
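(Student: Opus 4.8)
The plan is to test the claimed identity against scalar functions and to reduce to the chain rule already available from Proposition~\ref{thm_weak}. Fix $g \in C^1_c(\Tilde{N},\R)$ and put $h = g \circ f \in C^1(N,\R)$. Since $f\circ u$ is colocally weakly differentiable and $g$ has compact support, $h\circ u = g\circ(f\circ u)$ is weakly differentiable; hence Proposition~\ref{thm_weak} applies to $u$ with the test function $h$ and gives
\[
  D\bigl(g\circ(f\circ u)\bigr) = D(g\circ f)\circ Du \qquad\text{almost everywhere in } M.
\]
On the other hand, since $g \in C^1_c(\Tilde{N},\R)$ and $f\circ u$ is colocally weakly differentiable with colocal weak derivative $D(f\circ u)$, Definition~\ref{cwder} gives $D(g\circ(f\circ u)) = Dg\circ D(f\circ u)$ almost everywhere in $M$, while the classical chain rule gives $D(g\circ f) = Dg\circ Df$ everywhere on $N$. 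Combining these three facts,
\[
  Dg\circ D(f\circ u) = Dg\circ Df\circ Du \qquad\text{almost everywhere in } M,
\]
for every $g \in C^1_c(\Tilde{N},\R)$ (all linear maps being evaluated at the appropriate points $x$, $u(x)$, $f(u(x))$).

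It remains to cancel the factor $Dg$, which is done by a localization on $\Tilde{N}$. Given $y\in\Tilde{N}$, Lemma~\ref{lemme_diffeo} (applied with $\Tilde{N}$ in the role of $N$) provides a map $\varphi\in C^1(\Tilde{N},\R^{\Tilde{n}})$ with compact support, where $\Tilde{n}=\dim\Tilde{N}$, together with an open set $\Tilde{U}\ni y$ on which $\varphi$ restricts to a diffeomorphism onto its image, so that $D\varphi(z)$ is invertible for every $z\in\Tilde{U}$. Applying the previous identity to each component $\varphi_1,\dots,\varphi_{\Tilde{n}}\in C^1_c(\Tilde{N},\R)$ of $\varphi$ and reassembling, we obtain that
\[
  D\varphi\bigl(f(u(x))\bigr)\circ D(f\circ u)(x) = D\varphi\bigl(f(u(x))\bigr)\circ Df(u(x))\circ Du(x)
\]
for almost every $x\in u^{-1}(f^{-1}(\Tilde{U}))$; since $D\varphi(f(u(x)))$ is invertible there, $D(f\circ u) = Df\circ Du$ almost everywhere on $u^{-1}(f^{-1}(\Tilde{U}))$. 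Covering $\Tilde{N}$ by countably many such open sets $\Tilde{U}_i$ (possible since $\Tilde{N}$ has a countable basis) and observing that $\bigcup_i u^{-1}(f^{-1}(\Tilde{U}_i)) = M$ yields $D(f\circ u) = Df\circ Du$ almost everywhere in $M$.

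I expect the only genuine subtlety to be the very first reduction: the composition $g\circ f$ need not be compactly supported on $N$, so Definition~\ref{cwder} cannot be invoked for $u$ directly, and one must route through the extended chain rule of Proposition~\ref{thm_weak}, whose hypothesis that $h\circ u$ be weakly differentiable is exactly guaranteed by colocal weak differentiability of $f\circ u$. Everything else is routine: the cancellation of $Dg$ is the standard observation that finitely many scalar test functions detect all directions in $T_y\Tilde{N}$, made effective by the extended local charts of Lemma~\ref{lemme_diffeo}; and one should note in passing that $Df\circ Du$ is a measurable bundle morphism covering $f\circ u$ (since $Du$ is one by Proposition~\ref{thm_weak} and $Df$ is continuous), so that the almost-everywhere identity compares objects of the same nature.
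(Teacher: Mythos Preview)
Your proof is correct and follows the same route as the paper: test against \(g\in C^1_c(\Tilde{N},\R)\), observe that \(g\circ f\in C^1(N,\R)\) and that \((g\circ f)\circ u\) is weakly differentiable because \(f\circ u\) is colocally weakly differentiable, then invoke proposition~\ref{thm_weak} and the classical chain rule to get \(Dg\circ D(f\circ u)=Dg\circ Df\circ Du\). The only difference is cosmetic: the paper stops at this identity, since it already says that \(Df\circ Du\) satisfies the defining property of a colocal weak derivative of \(f\circ u\), and uniqueness (proposition~\ref{thm_weak}) finishes; you instead redo that uniqueness argument explicitly via the extended local charts and a countable cover, which is fine but unnecessary.
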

\begin{proof}
For every \(\varphi \in C^1_c (\Tilde{N}, \R)\), \(\varphi \circ f \in C^1 (N, \R)\) and \(\varphi \circ f\) is weakly differentiable. Therefore, by proposition~\ref{thm_weak} and the classical chain rule,
\[
 D \varphi \circ D (f \circ u) = D (\varphi \circ f \circ u) = D \bigl((\varphi \circ f) \circ u\bigr) =  D(\varphi \circ f) \circ D u = D \varphi \circ (D f \circ D u).\qedhere
\]

\end{proof}

%EMBEGGING WEAKLY DIFFERENTIABLE
\begin{propo}
\label{propoEmbeddingWeakDerivative}
Let \( \iota \colon N \to \Tilde{N}\) be an embedding and let \(u\colon M \to N\) be a map. 
If \(\iota \circ u\) is colocally weakly differentiable, then \(u\) is colocally weakly differentiable and \(Du\) is the unique bundle morphism such that 
\[
  D (\iota \circ u) = D \iota \circ D u
\]
almost everywhere on \(M\).\\
If moreover \(\iota(N)\) is closed, then the converse holds. 
\end{propo}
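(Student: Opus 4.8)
The plan is to reduce both implications to the chain rule for colocally weakly differentiable maps (Proposition~\ref{propoChainColocal}) and to the elementary fact that a compactly supported \(C^1\) test function on \(N\) can be pushed through the embedding \(\iota\); the only real work is this extension step, and the hypothesis that \(\iota(N)\) is closed will be needed precisely to run the extension in the opposite direction.

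For the forward implication, assume \(\iota\circ u\) is colocally weakly differentiable. I would first note that \(u=\iota^{-1}\circ(\iota\circ u)\) is measurable, since \(\iota\) is an embedding and hence \(\iota^{-1}\colon\iota(N)\to N\) is continuous. Then, given \(f\in C^1_c(N,\R)\) with \(K=\supp f\), the set \(\iota(K)\) is compact in \(\Tilde{N}\), and the key claim is that there is \(g\in C^1_c(\Tilde{N},\R)\) with \(g\circ\iota=f\) on \(N\): around each point of \(\iota(K)\) one uses a submanifold chart of \(\Tilde{N}\) in which \(\iota(N)\) is a coordinate slice, composes \(f\circ\iota^{-1}\) with the slice projection to extend it over that chart, patches these local extensions over a finite subcover of \(\iota(K)\) with a partition of unity (adding the zero function on \(\Tilde{N}\setminus\iota(K)\)), and multiplies by a cut-off equal to \(1\) near \(\iota(K)\); since \(f\) vanishes outside \(K\) one checks \(g\circ\iota=f\). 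With such a \(g\), \(f\circ u=g\circ(\iota\circ u)\) is weakly differentiable, so \(u\) is colocally weakly differentiable; Proposition~\ref{propoChainColocal} applied to \(\iota\in C^1(N,\Tilde{N})\) then gives \(D(\iota\circ u)=D\iota\circ Du\) almost everywhere. Uniqueness I would get from \(\iota\) being an immersion: \(D\iota(y)\) is injective for every \(y\), so if \(\upsilon\) is a bundle morphism covering \(u\) with \(D(\iota\circ u)=D\iota\circ\upsilon\), then \(D\iota(u(x))\circ\upsilon(x)=D\iota(u(x))\circ Du(x)\) for almost every \(x\), whence \(\upsilon(x)=Du(x)\).

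For the converse, I would assume \(\iota(N)\) closed in \(\Tilde{N}\) and \(u\) colocally weakly differentiable, fix \(g\in C^1_c(\Tilde{N},\R)\), and set \(f:=g\circ\iota\in C^1(N,\R)\). The only point is that \(f\) is now compactly supported: \(\supp g\cap\iota(N)\) is closed in the compact set \(\supp g\), hence compact, and \(\iota^{-1}\) carries it homeomorphically to a compact subset of \(N\) containing \(\supp f\). Then colocal weak differentiability of \(u\) makes \(g\circ(\iota\circ u)=f\circ u\) weakly differentiable, and since \(g\) is arbitrary, \(\iota\circ u\) is colocally weakly differentiable (with the derivative formula again from Proposition~\ref{propoChainColocal}).

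I expect the extension of \(f\) in the forward implication to be the only non-routine ingredient; what makes it go through without any assumption on \(\iota(N)\) is that \(\supp f\), and therefore the part of \(\iota(N)\) that matters, is compact in \(\Tilde{N}\), so only local submanifold charts near \(\iota(\supp f)\) are needed. The converse cannot avoid a hypothesis here: for a general \(g\in C^1_c(\Tilde{N},\R)\) the pullback \(g\circ\iota\) need not have compact support on \(N\), and closedness of \(\iota(N)\) is exactly what restores this and allows one to invoke the definition of colocal weak differentiability of \(u\).
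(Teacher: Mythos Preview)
Your proposal is correct and the converse direction matches the paper's argument essentially verbatim. The forward direction is where your route differs: you extend \(f\in C^1_c(N,\R)\) to \(\Tilde N\) by hand, via finitely many submanifold charts near \(\iota(\supp f)\) and a partition of unity, whereas the paper invokes the tubular neighborhood theorem once to obtain a global \(C^1\) retraction \(r=\iota\circ\pi_N\circ\Tilde\iota^{-1}\) of a neighborhood of \(\iota(N)\) onto \(\iota(N)\), and then sets \(\Tilde f=(f\circ\pi_N\circ\Tilde\iota^{-1})\,\eta\) for a single cut-off \(\eta\) supported in that neighborhood. Both constructions yield a \(\Tilde f\in C^1_c(\Tilde N,\R)\) with \(\Tilde f\circ\iota=f\), so the analytic payoff is identical; the paper's version is shorter and avoids any patching, at the cost of quoting a slightly heavier differential-topology result, while yours is more self-contained. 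Your explicit treatment of the chain-rule identity via Proposition~\ref{propoChainColocal} and of uniqueness via injectivity of \(D\iota\) is a welcome addition---the paper leaves these points implicit.
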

\begin{proof}
Since \(\iota\) is an embedding, \(\iota (N)\) has a tubular neighborhood in \(N\): there exists a vector bundle \((E, \pi_N, N)\) and an embedding \(\Tilde{\iota} : E \to \Tilde{N}\) such that \(\Tilde{\iota} \vert_{N} = \iota\) and \(\Tilde{\iota} (E)\) is open in \(\Tilde{N}\) \cite{hirsch}*{theorem 4.5.2}. In particular, \(\iota \circ \pi_N \circ \Tilde{\iota}^{-1}\) is a retraction of the tubular neighborhood \(\Tilde{\iota} (E)\) on \(\iota (N)\).
Let \(f\in C^1_c(N,\R)\). We choose \(\eta \in C^1_c (\Tilde{N}, \R)\) such that \(\eta = 1\) on \(\iota (\supp f)\) and \(\supp \eta \subset \Tilde{\iota} (E)\) and define \(\Tilde{f} = (f \circ \pi_N \circ \Tilde{\iota}^{-1})\eta  : \Tilde{N} \to \R\). Since \(\supp \eta \subset \Tilde{\iota}(E)\), the function \(\Tilde{f}\) is well-defined, \(\Tilde{f} \in C^1_c (\Tilde{N}, \R)\) and \(\Tilde{f} \circ \iota = f\) on \(N\). In particular, \(f \circ u =  \Tilde{f} \circ \iota \circ u\) is weakly differentiable.

Conversely, if \(\iota (N)\) is closed, then for every \(\varphi \in C^1_c (\Tilde{N}, \R)\), \(\varphi \circ \iota \in C^1_c (N, \R)\) and \(\iota \circ u\) is thus colocally weakly differentiable.
\end{proof}

By the Whitney embedding theorem \cite{whitney}, there always exists an embedding \(\iota : N \to \R^\nu = \R^{2n + 1}\) such that \(\iota (N)\) is closed \citelist{\cite{adachi}*{theorem 2.6}\cite{derham}*{theorem 5}}. Proposition~\ref{propoEmbeddingWeakDerivative} gives thus an equivalent definition of colocal weak differentiability; the drawback of this alternate approach to colocal weak differentiability is its dependence on the Whitney embedding theorem for differentiable manifolds.

Since differentiable manifold do not have in general any algebraic structure and since the colocally weakly differentiable functions between Euclidean spaces do not form a vector space \cite{bbggpv}, the colocal weak derivative does not have any algebraic properties of sum or product. There is however still a property of the cartesian product of maps.

%CLASSICAL PROPERTIES%PRODUCT
\begin{propo}[Product of manifolds]
Let \(N_1, N_2\) be two differentiable manifolds. 
If \(u_1 \colon M \to N_1\) and  \(u_2 \colon M \to N_2\) are colocally weakly differentiable, then the map \(u= (u_1,u_2) \colon M \to N_1\times N_2 \) is colocally weakly differentiable and 
\[ Du=(Du_1,Du_2) \qquad \text{ almost everywhere in } M.\]
\end{propo}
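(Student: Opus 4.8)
The plan is to exhibit, by hand, a bundle morphism satisfying the defining property of the colocal weak derivative of \(u = (u_1, u_2)\) and then to appeal to uniqueness. Identifying \(T_{(y_1, y_2)}(N_1 \times N_2) \simeq T_{y_1} N_1 \oplus T_{y_2} N_2\), let \(\upsilon \colon TM \to T(N_1 \times N_2)\) be given by \(\upsilon(x)[e] = (Du_1(x)[e], Du_2(x)[e])\). By Proposition~\ref{thm_weak} each \(Du_i\) is a measurable bundle morphism covering \(u_i\), so \(\upsilon\) is a measurable bundle morphism covering \(u\). By the uniqueness part of Proposition~\ref{thm_weak} and Definition~\ref{cwder}, it is then enough to show that for every \(f \in C^1_c(N_1 \times N_2, \R)\) the function \(f \circ u\) is weakly differentiable with \(D(f \circ u) = Df \circ \upsilon\) almost everywhere in \(M\); this yields simultaneously that \(u\) is colocally weakly differentiable and that \(Du = \upsilon = (Du_1, Du_2)\) almost everywhere. (As \(f\) is continuous with compact support and \(u\) is measurable, \(f \circ u\) is bounded and measurable, hence locally integrable, so only the identity for the derivative is at stake.)

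I would first handle test functions of product type, \(f = g \otimes h\), that is, \(f(y_1, y_2) = g(y_1) h(y_2)\) with \(g \in C^1_c(N_1, \R)\) and \(h \in C^1_c(N_2, \R)\), so that \(f \circ u = (g \circ u_1)(h \circ u_2)\). Since \(u_1\) and \(u_2\) are colocally weakly differentiable, \(g \circ u_1\) and \(h \circ u_2\) are weakly differentiable and bounded; hence in every local chart on \(M\) the product rule for bounded \(W^{1,1}_{\mathrm{loc}}\) functions applies and, together with Proposition~\ref{thm_weak}, gives almost everywhere in \(M\)
\begin{multline*}
 D(f \circ u) = (h \circ u_2)\, D(g \circ u_1) + (g \circ u_1)\, D(h \circ u_2) \\
 = (h \circ u_2)\,(Dg \circ Du_1) + (g \circ u_1)\,(Dh \circ Du_2) = Df \circ \upsilon ,
\end{multline*}
the last equality because \(Df(y_1, y_2)[\xi_1, \xi_2] = h(y_2)\, Dg(y_1)[\xi_1] + g(y_1)\, Dh(y_2)[\xi_2]\). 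By linearity the same conclusion holds for every finite sum \(\sum_\ell g_\ell \otimes h_\ell\) of such products.

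For a general \(f \in C^1_c(N_1 \times N_2, \R)\) the plan is to approximate \(f\), in the \(C^1\) norm and with supports inside a fixed compact product set \(K = K_1 \times K_2\), by finite sums of products. A partition of unity on \(N_1 \times N_2\) reduces this to \(f\) supported in a product chart \(V_1 \times V_2\); reading \(f\) in the charts, multiplying by cutoffs \(\chi_1 \otimes \chi_2\) equal to \(1\) near the support, and approximating the remaining factor on a box by polynomials in \((y_1, y_2)\) (for instance Bernstein polynomials, which converge in the \(C^1\) norm), one writes \(f\) as a \(C^1\)-limit of finite sums \(\sum_\ell g_\ell \otimes h_\ell\) with \(g_\ell \in C^1_c(N_1, \R)\), \(h_\ell \in C^1_c(N_2, \R)\), all supported in \(K\). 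Let \(f_k\) be such approximants, so that \(\norm{f_k - f}_\infty + \norm{Df_k - Df}_\infty \to 0\) and \(\supp f_k \subseteq K\). Then \(f_k \circ u \to f \circ u\) uniformly, hence in \(L^1_{\mathrm{loc}}(M)\), while by the previous step
\begin{multline*}
 \bigabs{D(f_k \circ u) - Df \circ \upsilon} = \bigabs{Df_k \circ \upsilon - Df \circ \upsilon} \\
 \le C\, \norm{Df_k - Df}_\infty \bigl(\abs{Du_1}\,\mathbf{1}_{u_1^{-1}(K_1)} + \abs{Du_2}\,\mathbf{1}_{u_2^{-1}(K_2)}\bigr) ,
\end{multline*}
which converges to \(0\) in \(L^1_{\mathrm{loc}}(M)\) because \(\abs{Du_i}\,\mathbf{1}_{u_i^{-1}(K_i)}\) is locally integrable on \(M\) --- this is essentially the content of the bilocal integrability of \(Du_i\) in Proposition~\ref{thm_weak}, and can also be read off from the colocal weak differentiability of \(u_i\) via the extended local charts of Lemma~\ref{lemme_diffeo}. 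Passing to the limit in the definition of the weak derivative, read in local charts on \(M\), shows that \(f \circ u\) is weakly differentiable with \(D(f \circ u) = Df \circ \upsilon\), which completes the proof.

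The two points requiring care are: the approximation of an arbitrary \(C^1_c\) test function on the product manifold by finite sums of products \(g_\ell \otimes h_\ell\), handled by localisation to product charts and polynomial approximation; and the verification that the candidate derivative \(Df \circ \upsilon\) is locally integrable, so that \(f \circ u\) is honestly weakly differentiable. The latter is where the bilocal integrability of the \(Du_i\) enters and is essentially the only ingredient beyond the product rule and elementary limits. Alternatively, Proposition~\ref{propoEmbeddingWeakDerivative} and the Whitney embedding theorem reduce the statement to Euclidean targets, where the approximation by sums of products is transparent, without changing the structure of the argument.
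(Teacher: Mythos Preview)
The paper states this proposition without proof, so there is no argument in the paper to compare against. Your proof is correct: the product rule disposes of tensor-product test functions \(g\otimes h\), the \(C^1\)-approximation of a general \(f\in C^1_c(N_1\times N_2,\R)\) by finite sums of such products (via localisation to a product chart, cutoffs, and polynomial approximation) is standard, and the passage to the limit is justified by the bilocal integrability of \(Du_1\) and \(Du_2\) from Proposition~\ref{thm_weak}. The points you flag as delicate are exactly the right ones.

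That said, the paper's own machinery suggests a shorter route that bypasses approximation entirely. Given \((y_1,y_2)\in N_1\times N_2\), take extended local charts \((U_i,\varphi_i,\varphi_i^*)\) on \(N_i\) around \(y_i\) from Lemma~\ref{lemme_diffeo}. The product data \(U=U_1\times U_2\), \(\varphi=(\varphi_1,\varphi_2)\), \(\varphi^*=(\varphi_1^*,\varphi_2^*)\) satisfy conditions (ii)--(iv) of that lemma on \(N_1\times N_2\); condition (i) fails if the \(N_i\) are noncompact, but that condition is used in the proof of Lemma~\ref{lemme_formula} only to ensure that \(\varphi\circ u\) is weakly differentiable, and here this holds directly because \(\varphi\circ u=(\varphi_1\circ u_1,\varphi_2\circ u_2)\) with each \(\varphi_i\in C^1_c(N_i,\R^{n_i})\). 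The argument of Lemma~\ref{lemme_formula} then runs verbatim: for every \(f\in C^1_c(N_1\times N_2,\R)\) one has \(f\circ u=(f\circ\varphi^*)\circ(\varphi\circ u)\) on \(u^{-1}(U)\), and the chain rule for weakly differentiable maps gives weak differentiability of \(f\circ u\) together with
\[
  Du=\bigl(D\varphi(u)\bigr)^{-1}\circ D(\varphi\circ u)=(Du_1,Du_2)
\]
on \(u^{-1}(U)\); a countable cover as in Proposition~\ref{thm_weak} finishes. Your argument stays at the level of Definition~\ref{cwdiff} and is self-contained, while the chart route reuses the internal mechanism of the existence proof and is noticeably quicker.
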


The uniqueness property can be refined for maps that coincide on a set of positive measure:

%EQUALITY OF DERIVATIVE 
\begin{propo}[Equality of derivatives]\label{propo_equ}
If the maps \(u, v \colon M \to N\) are colocally weakly differentiable, then \(Du = Dv\) almost everywhere on the set \(\{x \in M : u (x) = v (x)\}\). 
\end{propo}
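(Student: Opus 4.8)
The strategy is to reduce the statement to its Euclidean counterpart by composing with the extended local charts of Lemma~\ref{lemme_diffeo}. First I would record the classical fact that if \(\Omega \subseteq \R^m\) is open, \(g, h \in W^{1,1}_{\mathrm{loc}}(\Omega)\) and \(A \subseteq \Omega\) is measurable with \(g = h\) almost everywhere on \(A\), then \(Dg = Dh\) almost everywhere on \(A\); equivalently, applied to \(g - h\), the weak derivative of a \(W^{1,1}_{\mathrm{loc}}\) function vanishes almost everywhere on the set where the function itself vanishes. This is standard and follows, for instance, from the chain-rule formulas for \(h^+\) and \(h^-\) together with \(h = h^+ - h^-\), or from approximate differentiability at density points (see \cite{evans_gariepy}).

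Next, exactly as in the proof of Proposition~\ref{thm_weak}, I would fix a countable family \((U_i, \varphi_i, \varphi_i^*)_{i \in I}\) of extended local charts given by Lemma~\ref{lemme_diffeo} with \(\bigcup_{i \in I} U_i = N\). Each \(\varphi_i \in C^1(N,\R^n)\) has compact support, so each component of \(\varphi_i\) belongs to \(\Cc\), and \(D\varphi_i(y)\) is invertible for every \(y \in U_i\). Set \(E := \{x \in M : u(x) = v(x)\}\); since \((u,v) : M \to N \times N\) is measurable and, \(N\) being Hausdorff, the diagonal is closed, the set \(E\) is measurable. On \(E \cap u^{-1}(U_i)\) one has \(v(x) = u(x) \in U_i\), hence \(E \cap u^{-1}(U_i) = E \cap u^{-1}(U_i) \cap v^{-1}(U_i)\) and \(\varphi_i \circ u = \varphi_i \circ v\) there. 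Since \(u\) and \(v\) are colocally weakly differentiable, the maps \(\varphi_i \circ u\) and \(\varphi_i \circ v\) are weakly differentiable; reading them in an arbitrary local chart \(\psi : V \subseteq M \to \R^m\) on \(M\) and applying the Euclidean fact componentwise gives \(D(\varphi_i \circ u) = D(\varphi_i \circ v)\) almost everywhere on \(E \cap u^{-1}(U_i)\).

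Finally I would use the chain rule of Proposition~\ref{thm_weak} applied to each component of \(\varphi_i\): almost everywhere on \(M\), \(D(\varphi_i \circ u) = D\varphi_i \circ Du\), so by invertibility of \(D\varphi_i(u(x))\) on \(u^{-1}(U_i)\) one gets \(Du(x) = D\varphi_i(u(x))^{-1} \circ D(\varphi_i \circ u)(x)\) there, and likewise \(Dv(x) = D\varphi_i(v(x))^{-1} \circ D(\varphi_i \circ v)(x)\) almost everywhere on \(v^{-1}(U_i)\). On \(E \cap u^{-1}(U_i)\) the identities \(u = v\) and \(D(\varphi_i \circ u) = D(\varphi_i \circ v)\) then force \(Du = Dv\) almost everywhere, and since \(E = \bigcup_{i \in I}(E \cap u^{-1}(U_i))\) with \(I\) countable, we conclude \(Du = Dv\) almost everywhere on \(E\). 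All the geometric bookkeeping is already available from Proposition~\ref{thm_weak}, so the only genuine input is the Euclidean vanishing lemma above, which I expect to be the sole (mild) obstacle.
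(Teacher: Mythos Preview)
Your proposal is correct and follows essentially the same route as the paper: reduce to the Euclidean coincidence-set fact via the extended local charts of Lemma~\ref{lemme_diffeo}, use the invertibility of \(D\varphi_i\) on \(U_i\) to recover \(Du = Dv\), and patch together with a countable cover. The paper's proof is slightly terser and also carries a local chart \(\psi : V \to \R^m\) on \(M\) explicitly (hence a double countable cover of \(M\) and \(N\)), but the argument is the same.
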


\begin{proof}
Let \(A=\{x \in M : u (x) = v (x)\}\). Let \(U\subseteq N\) and \(\varphi \in C^1(N,\R^n)\) be the extended local chart given by lemma~\ref{lemme_diffeo} and let \(\psi : V \subseteq M \to \R^m\) be a local chart. 
Since \(\varphi \circ u \circ \psi^{-1}\) and \(\varphi\circ v \circ \psi^{-1}\) are weakly differentiable and \(\varphi \circ u \circ \psi^{-1}= \varphi \circ v \circ \psi^{-1}\) on \(\psi(A \cap V)\), 
\(D(\varphi \circ u \circ \psi^{-1}) = D(\varphi \circ v \circ \psi^{-1})\) almost everywhere on \(\psi (A \cap V)\). 
By definition of the colocal weak derivative, \(D\varphi \circ Du \circ D \psi^{-1}= D\varphi\circ Dv \circ D \psi^{-1}\) almost everywhere on \(\psi (A \cap V)\). 
Since \(D\varphi\) is invertible on \(u^{-1}(U)\),
\(Du = Dv\) almost everywhere on \(A \cap V\cap u^{-1}(U)\cap v^{-1}(U)\). Taking open countable covers \((U_i)_{i\in I}\) of \(N\) and \((V_j)_{j \in J}\) of \(M\), we conclude that \(Du = Dv\) almost everywhere on \(A\).
\end{proof}

%APPROXIMATE DIFFERENTIABLE
\subsection{Approximate differentiability}
We study the relationship between the colocal derivative and the approximate derivative.
For a map between differentiable manifolds, we generalize the classical definition of approximate derivative of maps between vector spaces \citelist{\cite{evans_gariepy}*{definition 6.1}\cite{federer}*{3.1.2}}.

\begin{de}
Let \(u \colon M \to N\) and \(x \in M\). 
The linear map \(\xi \colon T_x M \to T_{u (x)} N\) is an \emph{approximate derivative} of \(u\) at \(x\) if there exist local charts \(\psi: V \to \psi (V) \subseteq \R^m\)  with \(x \in V\) and \(V \subseteq M\) open, and \(\varphi: U \to \varphi (U) \subseteq \R^n\) with \(u (x) \in U\) and \(U \subseteq N\) open such that for every \(\varepsilon > 0\),
\begin{multline*}
  \lim_{\rho \to 0} \rho^{-m}\mathcal{L}^m \bigl(\psi(\{y \in V: u (y) \in U\\ 
  \abs{\varphi (u(y)) - \varphi(u(x)) - (D \varphi (u(x)) \circ \xi \circ \bigl(D\psi (x)\bigr)^{-1})[\psi (y) - \psi (x)] }\\
  > \varepsilon \abs{\psi (y) - \psi (x)}  \}) \cap B_\rho (\psi (x))\bigr)
  = 0.
\end{multline*}
\end{de}

The approximate derivative is unique and it is sufficient to establish its existence for one pair of diffeomorphisms.

If \(M\) and \(N\) are endowed with Riemannian metrics \(g_M\) and \( g_N\) respectively, it is natural to take for \(\varphi\) and \(\psi\) the exponential coordinates and to use the Riemannian distances \(d_N\) and \(d_M\) and measure \(\mu_M\); the approximate differentiability can be observed to be equivalent with requiring for every \(\varepsilon > 0\)
\[
  \lim_{\rho \to 0} \rho^{-m} \mu_M \bigl(\{y \in B^M_\rho (x): d_N (u (x),\exp_{u (x)} ( \xi (\exp^{-1}_{x} (y))) > \varepsilon d_M (x, y) \}\bigr) = 0;
\]
we recover thus in this particular case the definition of Focardi and Spadaro for maps from the Euclidean space to a Riemannian manifold \cite{focardi_spadaro}*{definition 0.3}.

%COLOCALLY -> APPROXIMATE 
\begin{propo}[Approximate differentiability of colocally weakly differentiable maps]\label{approx_der}
If \(u \colon M \to N\) is colocally weakly differentiable, then for almost every \(x \in M\), the colocal weak derivative \(D u (x)\) is the approximate derivative of \(u\) at \(x\).
\end{propo}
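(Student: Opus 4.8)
The plan is to reduce everything to the Euclidean/vector-valued case, where the relevant statement — that a weakly differentiable map $\R^m \to \R^n$ is approximately differentiable almost everywhere with approximate derivative equal to the weak derivative — is classical (see \cite{evans_gariepy}*{theorem 6.2} for $BV$ or \cite{federer}*{3.1.4}). Concretely, fix $y \in N$ and take the extended local chart $U \subseteq N$, $\varphi \in C^1(N,\R^n)$, $\varphi^* \in C^1(\R^n, N)$ from lemma~\ref{lemme_diffeo}, together with a local chart $\psi : V \subseteq M \to \R^m$. Since $u$ is colocally weakly differentiable, $\varphi \circ u \circ \psi^{-1} : \psi(V) \to \R^n$ is weakly differentiable, hence by the classical result it is approximately differentiable at almost every point of $\psi(V \cap u^{-1}(U))$, with approximate derivative equal to its weak derivative $D(\varphi \circ u)\circ D\psi^{-1}$, which by definition of the colocal weak derivative equals $D\varphi \circ Du \circ D\psi^{-1}$ almost everywhere on $u^{-1}(U)$.

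The second step is to transport this back to the manifold. On $U$ the map $\varphi_{\vert U}$ is a $C^1$ diffeomorphism onto $\varphi(U)$ with $C^1$ inverse (indeed $\varphi^* \circ \varphi = \id$ on $U$), so composition with $\varphi_{\vert U}^{-1}$ is bi-Lipschitz on compact subsets. Approximate differentiability is preserved under composition with a $C^1$ map that is a local diffeomorphism near the relevant point: if $\varphi \circ u \circ \psi^{-1}$ is approximately differentiable at $\psi(x)$ with approximate derivative $A$, then $u \circ \psi^{-1} = \varphi_{\vert U}^{-1} \circ (\varphi \circ u \circ \psi^{-1})$ is approximately differentiable at $\psi(x)$ with approximate derivative $D\varphi(u(x))^{-1} \circ A$; this is a routine estimate using that $\varphi_{\vert U}^{-1}$ is differentiable at $\varphi(u(x))$ and that the bad set for $u \circ \psi^{-1}$ is, up to a set of density zero, contained in the bad set for $\varphi \circ u \circ \psi^{-1}$ (one splits according to whether $u(y) \in U$, the complement having density $0$ at $x$ for a.e.\ $x \in u^{-1}(U)$ since $\psi(u^{-1}(U) \cap V)$ has full density at its own points a.e.). Unwinding the definitions, the resulting approximate derivative of $u$ at $x$, in the charts $\psi$ and $\varphi$, is exactly $Du(x) \in \Lin(T_x M, T_{u(x)}N)$. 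Since the definition of approximate derivative is chart-independent (it suffices to check it for one pair of charts, as remarked after the definition), this shows $Du(x)$ is the approximate derivative of $u$ at $x$.

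The final step is a covering argument identical to the one in the proof of proposition~\ref{thm_weak}: choose a countable open cover $(U_i)_{i \in I}$ of $N$ by extended-local-chart neighborhoods and a countable cover $(V_j)_{j \in J}$ of $M$ by chart domains; for each pair $(i,j)$ the above gives that $Du(x)$ is the approximate derivative of $u$ at almost every $x \in V_j \cap u^{-1}(U_i)$, and taking the countable union over $(i,j)$ — whose complement in $M$ is negligible — yields the claim for almost every $x \in M$.

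I expect the main obstacle to be the clean statement and verification of the chain-rule step for approximate differentiability (step two): one must check that composing on the outside with the $C^1$ local diffeomorphism $\varphi_{\vert U}^{-1}$ genuinely preserves approximate differentiability and transforms the approximate derivative by the classical chain rule, and in particular control the interaction between the set $\{u(y) \notin U\}$ and the density-zero estimates, so that the two ``bad sets'' match up to negligible density. This is not deep but requires care with the quantifiers in the definition of approximate derivative; everything else is bookkeeping with charts and a standard countable covering.
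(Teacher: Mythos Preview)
Your proposal is correct and follows the same strategy as the paper: reduce via an extended local chart \(\varphi\) and a chart \(\psi\) to the Euclidean statement that \(\varphi \circ u \circ \psi^{-1}\) is approximately differentiable almost everywhere with derivative \(D\varphi \circ Du \circ D\psi^{-1}\), handle the set \(\{y : u(y) \notin U\}\) by a Lebesgue density argument, and finish by a countable covering.

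The one difference is in how you phrase the second step. You frame it as composing on the outside with \(\varphi_{\vert U}^{-1}\) and invoking a chain rule for approximate differentiability, which you flag as the main obstacle. The paper sidesteps this entirely: since the \emph{definition} of approximate derivative on manifolds is already stated in terms of a pair of charts \(\psi, \varphi\) (and it suffices to verify it for one such pair), the approximate differentiability of \(\varphi \circ u \circ \psi^{-1}\) at \(\psi(x)\), together with the density-zero of \(\psi(\{y \in V : u(y) \notin U\})\) at \(\psi(x)\), \emph{is} the definition of \(Du(x)\) being the approximate derivative of \(u\) at \(x\). No chain-rule lemma is needed; what you anticipated as the delicate point dissolves once you read the manifold definition carefully. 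Your argument is not wrong, just slightly more roundabout than necessary.
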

\begin{proof}
Let \(\psi : V \subseteq M \to \R^m\) be a local chart around \(x \in M\). Let \( U \subseteq N\) and \(\varphi \in C^1(N,\R^n)\) be the extended local chart given by lemma~\ref{lemme_diffeo}. 
Since \(u\) is colocally weakly differentiable, \(\varphi \circ u \circ \psi^{-1}: \psi (V) \to  \R^n\) is weakly differentiable and \(D (\varphi \circ u \circ \psi^{-1})
=D \varphi \circ Du \circ D \psi^{-1}\) on \(\psi (V)\).
Since weakly differentiable maps between vector spaces are approximately differentiable \cite{evans_gariepy}*{theorem 6.1.4}, \(\varphi \circ u \circ \psi^{-1}\) is approximately differentiable almost everywhere on \(\psi (V)\), that is, 
\begin{multline*}
  \lim_{\rho \to 0} \rho^{-m}\mathcal{L}^m \bigl(\psi(\{y \in V: \\
  \abs{\varphi (u(y)) - \varphi(u(x)) - (D \varphi (u(x)) \circ D u (x) \circ \bigl(D\psi (x)\bigr)^{-1})[\psi (y) - \psi (x)] }\\
  > \varepsilon \abs{\psi (y) - \psi (x)}  \})\cap B_\rho (\psi (x))\bigr)
  = 0.
\end{multline*}
Next, we note that since \(u\) is measurable, almost every \(x \in M\) is a Lebesgue point of \(u\). 
Since the set \(U\) is open, for almost every \(x \in u^{-1} (U)\), 
\[
  \lim_{\rho \to 0} \rho^{-m}\mathcal{L}^m \bigl(\psi(\{y \in V: u (y) \not \in U \})\cap B_\rho (\psi (x))\bigr)
  = 0.
\]
Therefore we have for almost every \(x \in u^{-1} (U)\),
\begin{multline*}
  \lim_{\rho \to 0} \rho^{-m}\mathcal{L}^m \bigl(\psi(\{y \in V: u (y) \in U\\ 
  \abs{\varphi_{\vert U} (u(y)) - \varphi_{\vert U} (u(x)) - (D \varphi (u(x)) \circ Du(x) \circ \bigl(D\psi (x)\bigr)^{-1})[\psi (y) - \psi (x)] }\\
  > \varepsilon \abs{\psi (y) - \psi (x)}  \})\cap B_\rho (\psi (x))\bigr)
  = 0,
\end{multline*}
that is, \(Du (x)\) is the approximate derivative of \(u\) for almost every \(x \in u^{-1} (U)\). The conclusion follows by a countable covering argument.
\end{proof}

%%% SOBOLEV DEFINITION -------------------------------------------------
\section{Sobolev maps between Riemannian manifolds}

\paragraph{Preliminaries}
We assume now that \((M,g_M)\) and \((N,g_N)\) are Riemannian manifolds.
%DEF "FROBENIUS"
In particular the metrics on vectors of \(TM\) and \(TN\) induce a metric \(\gF\) on \(T^* M \otimes TN\). This metric can be computed for every \(\xi \in T^*_xM \otimes T_yN = \Lin (T_x M, T_y N)\) by
\[ 
  (\gF)(\xi,\xi) = \sum_{i=1}^m g_N \bigl(\xi(e_i), \xi(e_i)\bigr),
\]
where \((e_i)_{1\le i \le m}\) is an orthonormal basis in \(\pi^{-1}_M(\{x\})\) with respect to the Riemannian metric \(g_M\). %This definition is independent of the choice of the basis. 

We are now in position to define the Sobolev spaces.

\begin{de}\label{hom_sobolev}
Let \(p \in [1,\infty)\). 
A map \(u \colon M \to N\) belongs to the \emph{Sobolev space} \(\Wp\) if \(u\) is colocally weakly differentiable and \(\abs{Du}_{\gF} \in \Lr\).
\end{de}

When \(N\) is a Euclidean space, this space coincides with the classical homogeneous Sobolev space \cite{bbggpv}*{lemma 2.1}. %If \(p= \infty\), \(\Wp = \Lip(M, \R)\). 

\paragraph{Characterization of the norm of the derivative}
We characterize the quantity \(\abs{Du}_{\gF}\) that appears in the definition of Sobolev spaces. 
We recall that the \emph{operator norm} is defined for every \( \xi \in T^*_x M \otimes T_y N =\Lin(T_x M, T_y N) = \Hom (TM,TN)_{x,y}\) by
\[
  \abs{\xi}_{\Lin} = \sup \{\abs{\xi(e)} \colon e \in T_y N, \, \abs{e}_{g_N} \le 1\};
\] 
the \emph{Lipschitz semi-norm} of \(f \colon N \to \R^k\) is defined by 
\[ 
  \abs{f}_{\Lip} = \sup\left\{ \frac{\abs{f(y) - f(z)}}{d_N(y, z)} \colon y,z \in N \text{ and } y \ne z \right\}
\]
where \(d_N\) is the distance on \(N\) induced by the Riemannian metric \(g_N\). For every \(k\ge 1\), we denote by \(g_k\) the standard Euclidean metric on \(\R^k\). 

%EQUIVALENT DEF W^1,1_loc
\begin{propo}\label{thm_equi}
Let \(k \ge \min(m,n)\). Let \(u\colon M \to N \), \(w \colon M \to \R\) be measurable maps. The following statements are equivalent. 
\begin{enumerate}[(i)]
  \item \label{thm_equi1} \(u\) is colocally weakly differentiable and \(\abs{Du}_{\gF} \le w\) almost everywhere in \(M\), 
  \item \label{thm_equi2} for every \(f\in C^1_c(N,\R^k)\), \(f\circ u\) is weakly differentiable  and almost everywhere in \(M\)
    \begin{equation*}
      \abs{D(f\circ u)}_{\gFk} \le \abs{Df(u)}_{\Lin} w,
    \end{equation*}
  \item \label{thm_equi3} for every \(f\in C^1_c(N,\R^k)\), \(f\circ u\) is weakly differentiable  and almost everywhere in \(M\)
    \begin{equation*}
      \abs{D(f\circ u)}_{\gFk} \le \abs{f}_{\Lip}\, w,
    \end{equation*}
\end{enumerate}
If moreover \(w \in L^1_\mathrm{loc} (M)\), then for every \(f\in \Lipk \), \(f\circ u\) is weakly differentiable and almost everywhere in \(M\)
  \begin{equation*}
    \abs{D(f\circ u )}_{\gFk} \le \abs{f}_{\Lip}\, w. 
  \end{equation*}
\end{propo}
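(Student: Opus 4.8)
The plan is to derive this last assertion from the equivalence of conditions~\ref{thm_equi1} and \ref{thm_equi3} proved above, by enlarging the class of admissible test maps from $C^1_c(N,\R^k)$ to $\Lipk$ in three stages: first for \emph{bounded} Lipschitz maps, through a chart-by-chart application of the chain rule for the composition of a Lipschitz map with a weakly differentiable map; then, still for bounded maps, by patching the local pieces together; and finally by removing the boundedness assumption by a truncation in $\R^k$. For the first stage one fixes $r>0$ and covers $N$ by countably many extended local charts $(U_i,\varphi_i,\varphi_i^*)$ as in lemma~\ref{lemme_diffeo}, chosen so that the underlying chart on $U_i$ is a system of normal coordinates on a geodesic ball small enough that $\varphi_i$ and $\varphi_i^*$ differ from linear isometries by a factor at most $1+\omega(r)$ on $U_i$ and on $\varphi_i(U_i)$, where $\omega(r)\to 0$ as $r\to 0$.

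Given a bounded $f\in\Lipk$, set $H_i=(f\circ\varphi_i^*)\circ\varphi_i\circ u$. As $\varphi_i\in C^1_c(N,\R^n)$ and $\abs{Du}_{\gF}\le w\in\Lloc$, the map $\varphi_i\circ u$ lies in $W^{1,1}_{\mathrm{loc}}(M,\R^n)$ with $D(\varphi_i\circ u)=D\varphi_i\circ Du$; since $f\circ\varphi_i^*$ is a bounded Lipschitz map on $\R^n$, the Euclidean chain rule in its pointwise form (see for instance \cite{evans_gariepy}), applied in local charts on $M$, shows that $H_i\in W^{1,1}_{\mathrm{loc}}(M,\R^k)$ is bounded and that $\abs{DH_i}(x)\le \operatorname{lip}(f\circ\varphi_i^*)(\varphi_i(u(x)))\,\abs{D(\varphi_i\circ u)}(x)$ almost everywhere, $\operatorname{lip}$ denoting the pointwise Lipschitz constant. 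On $u^{-1}(U_i)$, where $\varphi_i^*=\exp_{y_i}$ and $\varphi_i=\exp_{y_i}^{-1}$, one has $\operatorname{lip}(f\circ\varphi_i^*)\le(1+\omega(r))\abs{f}_{\Lip}$ and $\abs{D(\varphi_i\circ u)}\le(1+\omega(r))\abs{Du}_{\gF}$, so $\abs{DH_i}\le(1+\omega(r))^2\abs{f}_{\Lip}\,w$ almost everywhere on $u^{-1}(U_i)$; moreover $H_i=f\circ u$ almost everywhere there since $\varphi_i^*\circ\varphi_i=\id$ on $U_i$. Choosing a locally finite $C^1_c$ partition of unity $(\chi_i)_i$ on $N$ with $\supp\chi_i\subseteq U_i$, one gets $f\circ u=\sum_i(\chi_i\circ u)H_i$ almost everywhere (the sum being pointwise, as $\sum_i\chi_i\circ u\equiv 1$ and $H_i=f\circ u$ wherever $\chi_i\circ u\ne 0$). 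The partial sums $S_J=\sum_{i\le J}(\chi_i\circ u)H_i$ lie in $W^{1,1}_{\mathrm{loc}}(M,\R^k)$; since $\abs{S_J}\le\abs{f\circ u}\le\norm{f}_\infty$ and, by the bound on $\abs{DH_i}$, the boundedness of the $H_i$, and $\sum_i\abs{D(\chi_i\circ u)}\le C_0\,w$ (from local finiteness and bounded overlap), $\abs{DS_J}$ is dominated by a fixed multiple of $w\in\Lloc$, dominated convergence gives $S_J\to f\circ u$ and $DS_J\to v$ in $L^1_{\mathrm{loc}}$, where $v$ is defined almost everywhere by $v=DH_i$ on $u^{-1}(U_i)$ (consistent by proposition~\ref{propo_equ} applied to the $H_i$) and where the cross-terms $\sum_i H_i\,D(\chi_i\circ u)$ vanish because $\sum_i\chi_i\circ u$ is constant. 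Hence $f\circ u$ is weakly differentiable with $D(f\circ u)=v$ and $\abs{D(f\circ u)}_{\gFk}\le(1+\omega(r))^2\abs{f}_{\Lip}\,w$ almost everywhere; letting $r\downarrow 0$ yields the inequality with constant $\abs{f}_{\Lip}$ for every bounded $f\in\Lipk$.

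For a general $f\in\Lipk$ one applies the bounded case to $T_R\circ f$, where $T_R\colon\R^k\to\overline{B}_R$ is the $1$-Lipschitz nearest-point projection: then $T_R\circ f\circ u$ is weakly differentiable with $\abs{D(T_R\circ f\circ u)}_{\gFk}\le\abs{f}_{\Lip}\,w$, and as $R\to\infty$ both $T_R\circ f\circ u$ and its derivative are pointwise eventually stationary, converging almost everywhere to $f\circ u$ and to some $v$ with $\abs{v}\le\abs{f}_{\Lip}\,w$. To conclude by dominated convergence that $f\circ u$ is weakly differentiable with derivative $v$ one needs $f\circ u\in\Lloc$; since $\abs{f\circ u}\le\abs{f(y_0)}+\abs{f}_{\Lip}\,d_N(u(\cdot),y_0)$ for a fixed $y_0\in N$, this follows by applying the bounded case to the $1$-Lipschitz functions $\min(d_N(\cdot,y_0),R)$ together with a Fatou argument: were $d_N(u(\cdot),y_0)$ not integrable on some compact $K\subseteq M$, the averages over $K$ of its truncations would tend to $+\infty$, forcing the truncations minus their averages to tend to $-\infty$ pointwise against the uniform $L^1(K)$-bound furnished by Poincar\'e's inequality and the gradient estimate, a contradiction.

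I expect the main obstacle to be keeping the constant equal to $\abs{f}_{\Lip}$ rather than a larger multiple: one cannot split $f$ by a partition of unity on $N$, because the pieces $\chi_i f$ have Lipschitz constants inflated by $\norm{\nabla\chi_i}_\infty\norm{f}_\infty$; hence the localization must be carried out \emph{after} composition, multiplying the already-composed maps $H_i$ by $\chi_i\circ u$ (for which $\sum_i\chi_i\circ u\equiv 1$ absorbs the inflation), and the chart distortions must be driven to zero by using normal coordinates together with the pointwise form of the Euclidean chain rule. A secondary technical point is the global local integrability of $f\circ u$ for unbounded $f$, which genuinely relies on the hypothesis $w\in\Lloc$ and not merely on the bilocal integrability of $Du$.
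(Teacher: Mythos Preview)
Your proposal treats only the final Lipschitz assertion, taking the equivalence (i)--(iii) as given; that is fine, since the implications (i)$\Rightarrow$(ii)$\Rightarrow$(iii) are one line each and the paper's (iii)$\Rightarrow$(i), via the almost-isometric charts of lemma~\ref{lemme_iso}, is essentially the same computation you carry out inside each $U_i$. For the last assertion, however, your route diverges from the paper's and contains a genuine gap.

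The paper does not patch charts at all. It invokes the approximation lemma~\ref{lemme_approx}: every $f\in\Lipk$ is a locally uniform limit of maps $f_\ell\in C^1_c(N,\R^k)$ with $\limsup_\ell\abs{f_\ell}_{\Lip}\le\abs{f}_{\Lip}$. The already-proved inequality $\abs{D(f_\ell\circ u)}\le\abs{f_\ell}_{\Lip}\,w$ together with $w\in L^1_{\mathrm{loc}}$ makes $(D(f_\ell\circ u))_\ell$ bounded and uniformly integrable on compacta; weak $L^1$-compactness then yields $f\circ u\in W^{1,1}_{\mathrm{loc}}$, and the pointwise bound follows by testing against $v\in C^1_c(M,TM\otimes\R^k)$. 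No partition of unity on $N$ enters.

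Your patching step asserts $\sum_i\abs{D(\chi_i\circ u)}\le C_0\,w$ with a \emph{constant} $C_0$, justified by ``local finiteness and bounded overlap''. This would require $\Phi:=\sum_i\abs{D\chi_i}$ to be \emph{globally} bounded on $N$. Local finiteness and bounded multiplicity of the cover give only that $\Phi$ is \emph{locally} bounded; a global bound would need uniform control on the $\abs{D\chi_i}_\infty$, hence a uniform lower bound on the radii of the charts, which is unavailable on a general Riemannian target---your own normal-coordinate balls, sized to achieve distortion at most $1+\omega(r)$, must shrink wherever the curvature is large or the injectivity radius is small. Without a global bound on $\Phi$, the dominating function for the cross-terms in $DS_J$ is $\norm{f}_\infty\,\Phi(u)\,w$, and nothing in the hypotheses forces $\Phi(u)\,w\in L^1_{\mathrm{loc}}(M)$, so the dominated-convergence passage to $DS_J\to v$ fails. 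The subsequent truncation and Poincar\'e steps rest on the bounded case and therefore inherit this gap. The paper's approximation argument sidesteps the whole issue, since it never needs to globalise over $N$.
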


Since the first assertion is independent of \(k\), there is also equivalence between these statements for every \(k \ge \min(m,n)\).  

Proposition~\ref{thm_equi} implies in particular that if \(p \in [1, \infty)\) and \(M\) is an open bounded subset of \(\R^m\), definition~\ref{hom_sobolev} is equivalent with the notion of Sobolev spaces into metric spaces \cite{resh_97}*{theorem 5.1}. 

% APPROXIMATION LIP MANIFOLDS
In order to prove proposition~\ref{thm_equi}, we shall use an approximation property of Lipschitz maps on manifolds.
\begin{lemme}[Approximation of Lipschitz maps]
\label{lemme_approx}
Let \(f \in \Lip (N, \R^k)\). 
There exists a sequence \((f_\ell)_{\ell \in \N}\) of maps in \(C^1_c (N, \R^k)\) that converges uniformly over compact subsets of \(N\) and  such that
\[
  \limsup_{\ell \to \infty} \abs{f_\ell}_{\Lip} \le \abs{f}_{\Lip}.
\]
\end{lemme}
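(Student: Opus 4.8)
The plan is to build the $f_\ell$ in two stages, working intrinsically on $N$ rather than through an isometric embedding $\iota\colon N\to\R^\nu$ (the latter is useless here, since for points far apart on $N$ the chordal distance $\abs{\iota(y)-\iota(z)}$ need not be comparable to $d_N(y,z)$, so one loses Lipschitz control at the global scale): first smooth $f$ on larger and larger compact pieces of $N$ with a gradient bound that converges to $\abs{f}_{\Lip}$, and then multiply by a cutoff which decays so slowly that compactifying the support does not spoil that bound.

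\emph{Stage 1 (smoothing with sharp gradient control on a compact set).} I would first establish: for every compact $K\subseteq N$ and every $\varepsilon>0$ there is $g\in C^\infty(N,\R^k)$ with $\sup_K\abs{g-f}<\varepsilon$ and $\sup_K\abs{Dg}_{g_N}\le(1+\varepsilon)\abs{f}_{\Lip}+\varepsilon$. Cover $K$ by finitely many balls $B_1,\dots,B_p$ carrying normal coordinates $\Phi_j\colon B_j\to\R^n$ so small that $g_N$ read in $\Phi_j$ is $(1+\varepsilon)$-bi-Lipschitz to the Euclidean metric, and take a partition of unity $(\chi_j)_j$ subordinate to $(B_j)_j$ with $\sum_j\chi_j=1$ near $K$. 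In the $j$-th chart $f\circ\Phi_j^{-1}$ is Euclidean-Lipschitz with constant $\le(1+\varepsilon)\abs{f}_{\Lip}$, so a Euclidean mollification $f_j$ of it at a small enough scale is of class $C^\infty$, uniformly close to $f\circ\Phi_j^{-1}$, and has $\abs{Df_j}\le(1+\varepsilon)\abs{f}_{\Lip}$; hence $\abs{D(f_j\circ\Phi_j)}_{g_N}\le(1+\varepsilon)^2\abs{f}_{\Lip}$ on $B_j$. Put $g=\sum_j\chi_j\,(f_j\circ\Phi_j)$. Using $\sum_jD\chi_j=0$ near $K$, there $Dg=\sum_jD\chi_j\,(f_j\circ\Phi_j-f)+\sum_j\chi_j\,D(f_j\circ\Phi_j)$; choosing the mollification scales small enough the first sum is pointwise arbitrarily small, and the second is $\le(1+\varepsilon)^2\abs{f}_{\Lip}$, which gives the claim (after relabelling $\varepsilon$).

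\emph{Stage 2 (slow cutoff).} Fix $x_0\in N$ and a smooth $\rho\colon N\to[0,\infty)$ with $\abs{D\rho}_{g_N}\le1$, $\rho\ge d_N(\cdot,x_0)$, and compact sublevel sets $\{\rho\le R\}$; here I use that closed bounded subsets of $N$ are compact, which holds when $N$ is complete and is genuinely needed (for $N=\R^2\setminus\{0\}$ and a nonzero constant $f$ no such sequence can exist). For each $\ell$ take $R_\ell=\ell$, $R_\ell'=\ell\,e^\ell$, let $g_\ell$ be the map from Stage 1 for $K=\{\rho\le R_\ell'+1\}$ and $\varepsilon=1/\ell$, and let $\zeta_\ell\in C^\infty([0,\infty),[0,1])$ be nonincreasing with $\zeta_\ell=1$ on $[0,R_\ell]$, $\zeta_\ell=0$ on $[R_\ell',\infty)$ and $\abs{\zeta_\ell'(t)}\le\tfrac{2}{\ell\,t}$ --- possible since $\int_{R_\ell}^{R_\ell'}\tfrac{2}{\ell\,t}\,dt=2$. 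Set $f_\ell=(\zeta_\ell\circ\rho)\,g_\ell$, extended by $0$.

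\emph{Verification and the hard part.} Then $\supp f_\ell\subseteq\{\rho\le R_\ell'\}$ is compact, so $f_\ell\in C^1_c(N,\R^k)$; and any compact $K\subseteq N$ lies in $\{\rho\le R_\ell\}$ for $\ell$ large, where $f_\ell=g_\ell$ and $\sup_K\abs{g_\ell-f}\le1/\ell$, so $f_\ell\to f$ uniformly on compact subsets. For the Lipschitz bound, $\abs{Df_\ell(x)}_{g_N}\le\abs{g_\ell(x)}\,\abs{\zeta_\ell'(\rho(x))}+\abs{Dg_\ell(x)}_{g_N}$; when $\rho(x)\in[R_\ell,R_\ell']$ one has $\abs{g_\ell(x)}\le\abs{f(x)}+1/\ell\le\abs{f(x_0)}+\abs{f}_{\Lip}\rho(x)+1/\ell$ and $\abs{\zeta_\ell'(\rho(x))}\le\tfrac{2}{\ell\,\rho(x)}$ with $\rho(x)\ge\ell$, so the first term is $\le\tfrac{2}{\ell}\abs{f}_{\Lip}+\tfrac{2}{\ell^2}(\abs{f(x_0)}+1)\to0$, while the second is $\le(1+1/\ell)^2\abs{f}_{\Lip}+1/\ell$; elsewhere $\abs{Df_\ell(x)}_{g_N}$ is $0$ or $\le\abs{Dg_\ell(x)}_{g_N}$. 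Since a $C^1$ map $h$ with $\sup_N\abs{Dh}_{g_N}\le L$ is $L$-Lipschitz for $d_N$, we get $\abs{f_\ell}_{\Lip}\le(1+1/\ell)^2\abs{f}_{\Lip}+o(1)$, hence $\limsup_\ell\abs{f_\ell}_{\Lip}\le\abs{f}_{\Lip}$. The one delicate step is exactly this last estimate: a crude product bound for $(\zeta_\ell\circ\rho)g_\ell$ would lose a multiplicative constant, because $f$ can be as large as $\abs{f}_{\Lip}\,\rho$ on the shell where the cutoff varies; the remedy is to let the cutoff decay only logarithmically (gradient $\lesssim1/(\ell\,\rho)$, i.e. spread over an exponentially thick shell), so that $\abs{g_\ell}\,\abs{D(\zeta_\ell\circ\rho)}=O(\abs{f}_{\Lip}/\ell)$, which in turn forces the use of precompactness of large metric balls in $N$.
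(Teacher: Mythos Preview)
Your argument is correct. It differs from the paper's proof in two respects. First, the paper reverses your order of operations: it first manufactures a compactly supported \emph{Lipschitz} approximation \(\tilde f_\ell\) and only then smooths it, outsourcing the smoothing-with-Lipschitz-control step to the literature (Greene--Wu; Azagra--Ferrera--L\'opez-Mesas--Rangel), whereas your Stage~1 essentially re-proves a localized version of that result by hand via normal charts, mollification and a partition of unity. Second, to tame the product \(|g_\ell|\cdot|D(\text{cutoff})|\) the paper does not use a logarithmic cutoff but instead first truncates the \emph{values} of \(f\) by composing with the radial retraction \(T_\ell(x)=x\) for \(|x|\le\ell\), \(T_\ell(x)=\ell x/|x|\) otherwise, so that \(|T_\ell\circ f|\le\ell\) while \(|T_\ell\circ f|_{\Lip}\le|f|_{\Lip}\), and then multiplies by \(\theta_\ell(z)=\theta\bigl(d_N(y,z)^{1/\ell}\bigr)\), whose gradient is \(O(1/\ell)\). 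In effect \(\theta(d^{1/\ell})\) also varies over an exponentially thick shell, so your log-cutoff and the paper's \(\ell\)-th-root cutoff are cousins; the target truncation simply replaces your growth-versus-decay balance by a one-line product bound. The paper's route is thus shorter on the page (the hard smoothing step is a citation), while yours is more self-contained and makes the cutoff mechanism explicit. Your observation that one needs closed bounded subsets of \(N\) to be compact is correct: the paper's sketch silently relies on this when asserting that \(\theta_\ell\) has compact support, and your counterexample on \(\R^2\setminus\{0\}\) shows the lemma indeed fails without it.
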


\begin{proof}[Sketch of the proof]
Given \(y \in N\) and \(\theta \in C^1_c ([0, \infty), \R)\) such that \(\theta = 1\) on \([0, 1]\), we define for every \(\ell \in \N_*\),
\[
  \theta_\ell(z) = \theta\Bigl(\sqrt[\ell]{d_N(y,z)}\Bigr), 
\]
and observe that \(\theta_\ell \in C^1_c  (N, \R)\), 
\[
  \abs{D \theta_\ell}_{\Lin} \le \frac{\norm{\theta'}_{L^\infty}}{\ell},
\]
and \((\theta_\ell)_{\ell \in \N_*}\) converges to \(1\) uniformly over compact subsets of \(N\).
We also define \(T_\ell : \R^k \to \R^k\) for \(\ell \in \N_*\) and \(x \in \R^k\) by 
\[
  T_\ell (x) =
  \begin{cases}
    x & \text{if \(\abs{x} \le \ell\)},\\
    \ell x/\abs{x} & \text{if \(\abs{x} > \ell\)}.
  \end{cases}
\]
For every \(\ell \in \N_*\), \(T_\ell\) is nonexpansive and bounded by \(\ell\) and the sequence \((T_\ell)_{\ell \in \N_*}\) converges uniformly to the identity over compact subsets. 

If we define \(\Tilde{f}_\ell = \theta_\ell \cdot (T_{\ell} \circ f)\),
we observe for every \(\ell \in \N_*\) that \(\Tilde{f}_\ell \in \Lip (N, \R^k)\), that  
\[
  \abs{\Tilde{f}_\ell}_{\Lip} \le \abs{f}_{\Lip} + \frac{\norm{\theta'}_{L^\infty}}{\ell},
\] 
and that the support of \(f_\ell\) is compact; the sequence \((\Tilde{f}_\ell)_{\ell \in \N_*}\) converges uniformly over compact subsets. 
Hence the conclusion follows by approximating uniformly \(\Tilde{f}_\ell\) by differentiable functions with a control on the Lipschitz norm \citelist{\cite{greene_wu}*{lemma 8}\cite{aflr}}. 
\end{proof}

We shall also rely on a refined version of the extended local charts of lemma~\ref{lemme_diffeo}. 

%LEMME MEILLEUR CONTROLE NORME DERIVEE
\begin{lemme}[Almost isometric extended local charts]\label{lemme_iso}
For every \(y \in N\) and every \(\varepsilon > 0\), there exist an open subset \(U\subseteq N\) such that \(y\in U\),
and maps \(\varphi \in C^1(N,\R^n)\) and \(\varphi^* \in C^1(\R^n,N)\) such that 
\begin{enumerate}[(i)]
  \item the set \(\supp \varphi\) is compact,
  \item the set \(\overline{\{ x \in \R^n \colon \varphi^*(x) \neq \varphi^*(\varphi(y))\}}\) is compact,
  \item the map \(\varphi_{\vert U}\) is a diffeomorphism onto its image \(\varphi(U)\),
  \item \(\varphi^* \circ \varphi= \id\) in \(U\),
  \item \label{propControl} for every \(z \in N\),
\begin{align*}
&\abs{D\varphi(z)}_{\Lin} \le 1+\varepsilon &
\text{ and }&
&\abs{D\varphi^*(\varphi(z))}_{\Lin} \le 1+\varepsilon.
\end{align*}
\end{enumerate} 
\end{lemme}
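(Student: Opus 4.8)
The plan is to run the same construction as in the proof of Lemma~\ref{lemme_diffeo}, but with two modifications: use for the local chart around \(y\) the \emph{normal coordinates} \(\psi = \exp_y^{-1}\), which are almost isometric near \(y\); and replace the multiplicative cutoff \(z \mapsto \theta(\psi(z))\,\psi(z)\) of Lemma~\ref{lemme_diffeo} by composition with a \emph{radial nonexpansive smooth retraction} \(\kappa\) of \(\R^n\). The multiplicative cutoff cannot be reused here, since the term \(\psi(z) \otimes (D\theta(\psi(z)) \circ D\psi(z))\) appearing in its differential has operator norm of order \(1\), not \(\varepsilon\), on the annulus where \(D\theta\) does not vanish.

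First I would fix an orthonormal frame of \((T_yN, g_N)\) to identify \(T_yN\) with \(\R^n\), and recall that \(\exp_y\) is a diffeomorphism from some ball \(B_{\rho_0} \subseteq \R^n\) onto an open neighborhood \(V\) of \(y\) with \(D\exp_y(0) = \id\). Setting \(\psi := (\exp_y|_{B_{\rho_0}})^{-1} \colon V \to B_{\rho_0}\), the differentials \(D\psi(y)\) and \(D\exp_y(0)\) are linear isometries, so by continuity of \(z \mapsto D\psi(z)\) and \(x \mapsto D\exp_y(x)\) there is \(r \in (0, \rho_0/3)\) such that \(\abs{D\psi(z)}_{\Lin} \le 1 + \varepsilon\) whenever \(\psi(z) \in \overline{B_{3r}}\), and \(\abs{D\exp_y(x)}_{\Lin} \le 1 + \varepsilon\) for every \(x \in \overline{B_r}\).

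The heart of the matter is to produce a \(C^1\) map \(\kappa \colon \R^n \to \R^n\) of the form \(\kappa(x) = \lambda(\abs{x})\,x\), with \(\lambda \colon [0,\infty) \to [0,1]\) smooth, \(\lambda \equiv 1\) near \(0\), \(\lambda \equiv 0\) on \([3r, \infty)\), and with \(t \mapsto t\,\lambda(t)\) having derivative bounded by \(1\) in absolute value; this is possible precisely because \(t \mapsto t\lambda(t)\) only has to climb with slope \(1\) on \([0,r]\) and then fall back to \(0\) over \([r,3r]\). Such a \(\kappa\) maps \(\R^n\) into \(\overline{B_r}\), equals the identity near \(0\), vanishes outside \(B_{3r}\), and satisfies \(\abs{D\kappa(x)}_{\Lin} \le 1\) everywhere, since \(x/\abs{x}\) and its orthogonal complement are eigenspaces of \(D\kappa(x)\) with eigenvalues \((t\lambda(t))'\vert_{t = \abs{x}}\) and \(\lambda(\abs{x})\).

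Then I would set \(U := \exp_y(B_r)\), define \(\varphi \colon N \to \R^n\) by \(\varphi := \kappa \circ \psi\) on \(V\) and \(\varphi := 0\) on \(N \setminus \psi^{-1}(\overline{B_{3r}})\) (these agree on the overlap since \(\kappa\) vanishes off \(B_{3r}\), so \(\varphi\) is well defined, \(C^1\), with \(\supp \varphi \subseteq \psi^{-1}(\overline{B_{3r}})\) compact), and define \(\varphi^* := \exp_y \circ \kappa\), which is well defined and \(C^1\) since \(\kappa\) takes values in \(\overline{B_r} \subseteq B_{\rho_0}\). Properties (i)--(iv) then follow as in Lemma~\ref{lemme_diffeo}: on \(U\) one has \(\psi(U) = B_r\) and \(\kappa = \id\) there, so \(\varphi|_U = \psi|_U\) is a diffeomorphism onto \(B_r\) and \(\varphi^* \circ \varphi = \exp_y \circ \psi = \id\) on \(U\); moreover \(\varphi(y) = \kappa(0) = 0\) and \(\exp_y\) is injective on \(\overline{B_r}\), so the set in (ii) equals \(\{x : \kappa(x) \ne 0\} \subseteq B_{3r}\). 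For (v): where \(\varphi\) is locally constant, \(D\varphi = 0\); elsewhere \(D\varphi(z) = D\kappa(\psi(z)) \circ D\psi(z)\) with \(\psi(z) \in \overline{B_{3r}}\), whence \(\abs{D\varphi(z)}_{\Lin} \le \abs{D\kappa(\psi(z))}_{\Lin}\,\abs{D\psi(z)}_{\Lin} \le 1 + \varepsilon\); and \(D\varphi^*(x) = D\exp_y(\kappa(x)) \circ D\kappa(x)\) with \(\kappa(x) \in \overline{B_r}\), whence \(\abs{D\varphi^*(x)}_{\Lin} \le 1 + \varepsilon\) for every \(x \in \R^n\), in particular at \(x = \varphi(z)\). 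The one genuinely delicate point is the construction of \(\kappa\): realising that one must compose with a \(1\)-Lipschitz radial retraction rather than multiply by a cutoff, and checking that a map which is simultaneously smooth, equal to the identity near the origin, compactly supported and nonexpansive exists (which forces its support radius to be at least twice the radius on which it is the identity); everything else is a routine transcription of the proof of Lemma~\ref{lemme_diffeo}.
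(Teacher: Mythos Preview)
Your approach coincides with the paper's: it too replaces the multiplicative cutoff of Lemma~\ref{lemme_diffeo} by composition with a nonexpansive map \(\theta \in C^1_c(\R^n,\R^n)\) equal to the identity on \(B_r\) and supported in \(B_{3r}\) (asserting existence without your explicit radial formula), and instead of normal coordinates it uses an arbitrary chart affinely normalized so that \(\psi(y)=0\) and \(D\psi(y)\) is a linear isometry. One small slip to fix: with \(\lambda\) of class \(C^1\) and \(\lambda\equiv 1\) on \([0,r]\) you have \((t\lambda(t))'\vert_{t=r}=1\), so \(t\lambda(t)\) overshoots \(r\) and \(\kappa(\R^n)\not\subseteq\overline{B_r}\); simply require \(\abs{D\exp_y}_{\Lin}\le 1+\varepsilon\) on \(\overline{B_{3r}}\) rather than on \(\overline{B_r}\), and the rest of your argument goes through unchanged.
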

The difference with lemma~\ref{lemme_diffeo} lies in the control \eqref{propControl} on the operator norms of \(D\varphi\) and \(D\varphi^*\). 

\begin{proof}[Proof of lemma~\ref{lemme_iso}]
Since \(N\) is a differentiable manifold, there exists a local chart \(\psi : V \subseteq N \to \R^n\) around \(y\in N\). Up to an affine transformation on \(\R^n\) we can assume that \(\psi (y) = 0\) and \(D\psi(y)\in \mathcal{L}(T_y N, \R^n)\) is an isometry.

By continuity of \(D \psi\), there exists \(\delta>0\) such that if \(z \in N\) and  \(d_N(y,z) \le \delta\), then 
\begin{align*} 
\abs{D\psi(z)}_{\Lin} &\le 1 + \varepsilon &
&\text{ and } &
\bigabs{D(\psi_{\arrowvert_V})^{-1}(\psi(z))}_{\Lin} & \le 1 + \varepsilon.
\end{align*}
We choose \(r>0\) such that \(B_{3 r} \subseteq \psi (B_\delta (y))\). 
We take a map \(\theta \in C^1_c(\R^n,\R^n)\) such that \(\theta = \id\) on \(B_r\), \(\supp(\theta) \subset B_{3 r}\) and \(\sup_{x\in \R^n} \abs{D\theta(x)}_{\Lin} \le 1\). We finally define \(\varphi = \theta \circ \psi\), \( \varphi^* = (\psi_{\arrowvert V})^{-1}\circ \theta\) and \(U= (\psi_{\arrowvert V})^{-1}(B_r)\) and we conclude as in the proof of lemma~\ref{lemme_diffeo}.
\end{proof}

We shall also use the following lemma to compute Euclidean norms of maps.
%LEMMA FROBENIUS NORM
\begin{lemme}[Reduction of the Euclidean norm of operators]\label{frob_norm}
Let \(x\in M\) and let \(\xi \in \Lin(T_x M,\R^n)\) be a linear map. If \(k \ge \min (m, n)\), then 
\[\abs{\xi}_{g^*_M\otimes g_n} = \sup \bigl\{ \abs{ \rho \circ \xi}_{g^*_M \otimes g_k}: \rho \in \Lin (\R^n,\R^k) \text{ and } \abs{\rho}_{\Lin} \le 1\bigr\}.
\]
\end{lemme}

\begin{proof}
On the one hand, if \(\rho \in \Lin (\R^n,\R^k) \text{ and } \abs{\rho}_{\Lin} \le 1\), then \(\abs{\rho \circ \xi}_{g^*_M\otimes g_k}\le \) \(\abs{\rho}_{\Lin} \abs{\xi}_{g^*_M\otimes g_n}\le \) \(\abs{\xi}_{g^*_M \otimes g_n}\).
On the other hand, since \(\dim(\im(\xi)) \le \min(m, n) \le k\), can choose \(\rho \in \Lin (\R^n,\R^k)\) such that \(\rho\) is an isometry on \(\im(\xi)\) and consequently \(\abs{\rho}_\mathcal{\Lin} \le 1\) and  \(\abs{\rho \circ \xi}_{g^*_M \otimes g_k} = \abs{\xi}_{g^*_M\otimes g_n}\).
\end{proof}

%PROOF PROPO CHARACTERIZATION
\begin{proof}[Proof of proposition~\ref{thm_equi}]
% 1 implique 2
Let us prove that \eqref{thm_equi1} implies \eqref{thm_equi2}. For every \(f\in C^1_c(N,\R^k)\), since \(u\) is colocally weakly differentiable, \(f\circ u\) is weakly differentiable. By proposition~\ref{thm_weak}, \(D(f\circ u) = Df \circ Du\) almost everywhere in \(M\), and so 
\[ 
  \abs{D(f\circ u)}_{\gFk} \le \abs{Df(u)}_{\Lin} \abs{Du}_{\gF} \le \abs{Df(u)}_{\Lin} \, w. 
\]

%2 implique 3
Since for every \(f\in C^1_c(N,\R^k)\) and for every \(y\in N\), \(\abs{Df(y)}_{\Lin} \le \abs{f}_{\Lip}\), the assertion \eqref{thm_equi2} implies directly \eqref{thm_equi3}. 

%3 IMPLIQUE 1
In order to prove that \eqref{thm_equi3} implies \eqref{thm_equi1} we first note that the map \(u\) is colocally weakly differentiable, and, by proposition~\ref{thm_weak}, has a unique colocal weak derivative \(Du\in \Hom(TM,TN)\).
Secondly, let \(U \subseteq N\), \(\varphi \in C^1(N,\R^n)\) and \(\varphi^* \in C^1(\R^n,N)\) be given by lemma~\ref{lemme_iso} for \(y\in N\) and \(\varepsilon >0\).
Since \(u = \varphi^* \circ \varphi \circ u\) on \(u^{-1} (U)\), by proposition~\ref{propo_equ}, \(Du = D(\varphi^* \circ \varphi \circ u)\) almost everywhere on \(u^{-1}(U)\).
By lemma~\ref{lemme_iso}, almost everywhere on \( u^{-1}(U)\), %by chain rule formula and according to definition of Frobenius norm,
\begin{equation}\label{equ_du}
\abs{Du}_{\gF} \le \abs{D\varphi^*(\varphi (u))}_{\Lin} \abs{D(\varphi \circ u)}_{g^*_M\otimes g_n} \le (1+\varepsilon) \abs{D(\varphi \circ u)}_{g^*_M\otimes g_n}.
\end{equation}
If \(\rho \in \Lin (\R^n,\R^k)\) and \(\abs{\rho}_{\Lin} \le 1\), by assumption, \(\abs{D(\rho \circ \varphi \circ u)}_{g^*_M\otimes g_k} \le \Lip(\rho \circ \varphi)\, w\) almost everywhere in \(M\). 
Since \(d_N\) is a geodesic distance, in view of lemma~\ref{lemme_iso}, \(\Lip(\varphi) = \sup_{z \in N} \abs{D\varphi(z)}_{\Lin} \le 1 + \varepsilon\). %BREZIS remark 7 p269
Hence, we have almost everywhere in \(M\)
\[
  \abs{\rho \circ D( \varphi \circ u)}_{g^*_M\otimes g_k}
  =\abs{D(\rho \circ \varphi \circ u)}_{g^*_M\otimes g_k}
  \le (1+\varepsilon) w.
\]
Since the set of nonexpansive linear maps is separable, we deduce from lemma~\ref{frob_norm} that
\[
  \abs{D( \varphi \circ u)}_{g^*_M \otimes g_n} \le (1 + \varepsilon) w
\]
almost everywhere in \(M\).
By inequality \eqref{equ_du}, we conclude that
\begin{equation*}\label{equ_du2}
\abs{Du}_{\gF} \le (1+\varepsilon)^2 w.
\end{equation*}
almost everywhere on \(u^{-1}(U)\).
We conclude by countable covering of \(N\).

%LAST ASSERTION
We now prove the last assertion. Let \(f \in \Lipk\).
By the approximation lemma~\ref{lemme_approx}, there exists a sequence \((f_\ell)_{\ell \in\mathbb{N}}\) in \(C^1_c(N,\R^k)\) such that \((f_\ell)_{\ell \in \mathbb{N}}\) converges uniformly to \(f\) over compact subsets  and \(\limsup_{l\to \infty} \abs{f_\ell}_{\Lip} \le \abs{f}_{\Lip}\). %La reference de Hebey \cite{hebey}*{theorem 2.7} ne couvre pas le cas \(p = \infty\).}
Since the sequence \((D(f_\ell \circ u))_{\ell \in \N}\) is bounded and uniformly integrable %equi / uniformly the same 
and since \((f_\ell \circ u)_{\ell \in \N}\) converges almost everywhere to \(f \circ u\), in view of the weak compactness criterion in \(L^1(M)\) \citelist{\cite{bogachev}*{corollary 4.7.19}\cite{brezis}*{theorem 4.30}}, the sequence \((D(f_\ell\circ u))_{\ell \in\mathbb{N}}\) converges weakly to \(D(f\circ u)\) in \(L^1_\mathrm{loc}(M)\) and \(f\circ u \in W^{1, 1}_\mathrm{loc} (M, \R^k)\). 
Moreover, for every \(v \in C^1_c (M, TM\otimes \R^k)\),
\begin{align*}
\Bigabs{\int_{M} \psh{D (f \circ u)}{v}} & = \lim_{\ell \to \infty} \Bigabs{\int_{M} \psh{D (f_\ell \circ u)}{v}} \\
& \le \liminf_{\ell \to \infty} \abs{f_\ell}_{\Lip} \int_{M} \abs{v}_{g_M^*\otimes g_k} w
\le \abs{f}_{\Lip} \int_{M} \abs{v}_{g_M^*\otimes g_k} w,
\end{align*}
and therefore
\(\abs{D (f \circ u)}_{\gFk} \le \abs{f}_{\Lip} w \) almost everywhere in \(M\).
\end{proof}

%COMPOSITION LIPSCHITZ MAP
Thanks to proposition~\ref{thm_equi}, we can consider the composition of a Lipschitz map from a manifold into an other with a map of homogeneous Sobolev space. 

\begin{propo}[Chain rule in Sobolev spaces]
\label{propositionCompositionLipschitz}
Let \((\Tilde{N}, g_{\Tilde{N}})\) be a Riemannian differentiable manifold.
Let \(u \in \Wp\) and let \(f \in \Lip(M, \Tilde{N})\). 
Then \(f\circ u \in \dot{W}^{1,p}(M, \Tilde{N})\) and
\[ 
  \abs{D(f\circ u)}_{g^*_M\otimes g_{\Tilde{N}}} \le \abs{f}_{\Lip} \abs{D u}_{\gF} \text{ almost everywhere in } M . 
\]
\end{propo}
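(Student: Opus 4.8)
The plan is to reduce the statement to an application of Proposition~\ref{thm_equi} (the last assertion, for Lipschitz test maps) composed with an almost isometric extended local chart on \(\Tilde{N}\). First I would fix \(u \in \Wp\) and \(f \in \Lip(M,\Tilde{N})\) — I assume the intended hypothesis is \(f \in \Lip(N, \Tilde{N})\), so that \(f \circ u\) makes sense — and take \(k = \min(m, \tilde n)\) where \(\tilde n = \dim \Tilde{N}\); since \(u \in \Wp\), Proposition~\ref{thm_equi} applies with \(w = \abs{Du}_{\gF} \in L^p(M) \subseteq L^1_{\mathrm{loc}}(M)\), so for every \(g \in \Lip(N, \R^k)\) the composition \(g \circ u\) is weakly differentiable with \(\abs{D(g\circ u)}_{\gFk} \le \abs{g}_{\Lip}\, \abs{Du}_{\gF}\) almost everywhere.

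Next I would localize on the target. Fix \(\tilde y \in \Tilde{N}\) and \(\varepsilon > 0\), and apply Lemma~\ref{lemme_iso} on \(\Tilde{N}\) to get an open set \(\Tilde{U} \ni \tilde y\), \(\varphi \in C^1(\Tilde{N}, \R^{\tilde n})\) and \(\varphi^* \in C^1(\R^{\tilde n}, \Tilde{N})\) with \(\varphi^* \circ \varphi = \id\) on \(\Tilde{U}\), \(\supp\varphi\) compact, and \(\abs{D\varphi(z)}_{\Lin} \le 1+\varepsilon\), \(\abs{D\varphi^*(\varphi(z))}_{\Lin} \le 1+\varepsilon\) for all \(z \in \Tilde{N}\). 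Since \(\varphi\) is \(C^1\) with compact support, \(\varphi \circ f \in \Lip(N, \R^{\tilde n})\), with \(\abs{\varphi \circ f}_{\Lip} \le (1+\varepsilon)\abs{f}_{\Lip}\) because \(d_{\Tilde{N}}\) is a geodesic distance and hence \(\Lip(\varphi) = \sup_z \abs{D\varphi(z)}_{\Lin} \le 1+\varepsilon\). Applying the last assertion of Proposition~\ref{thm_equi} to \(g = \varphi \circ f\), the map \(\varphi \circ f \circ u\) is weakly differentiable with \(\abs{D(\varphi \circ f \circ u)}_{g_M^*\otimes g_{\tilde n}} \le (1+\varepsilon)\abs{f}_{\Lip}\, \abs{Du}_{\gF}\) a.e. Then \(\varphi \circ f \circ u\) being weakly differentiable shows \(f \circ u\) is colocally weakly differentiable (on the relevant piece), and since \(f \circ u = \varphi^* \circ \varphi \circ f \circ u\) on \((f\circ u)^{-1}(\Tilde{U})\), Proposition~\ref{propo_equ} (equality of derivatives) gives \(D(f \circ u) = D(\varphi^* \circ \varphi \circ f \circ u)\) a.e. on that set; the chain rule (Proposition~\ref{propoChainColocal}) and the operator norm bound on \(D\varphi^*\) yield, a.e. on \((f\circ u)^{-1}(\Tilde U)\),
\[
  \abs{D(f\circ u)}_{g_M^*\otimes g_{\Tilde{N}}}
  \le \abs{D\varphi^*(\varphi(f\circ u))}_{\Lin}\,\abs{D(\varphi\circ f\circ u)}_{g_M^*\otimes g_{\tilde n}}
  \le (1+\varepsilon)^2 \abs{f}_{\Lip}\,\abs{Du}_{\gF}.
\]
Covering \(\Tilde{N}\) by countably many such \(\Tilde{U}\) (possible since \(\Tilde{N}\) has a countable basis), \(f \circ u\) is colocally weakly differentiable on all of \(M\), and letting \(\varepsilon \to 0\) the pointwise inequality \(\abs{D(f\circ u)}_{g_M^*\otimes g_{\Tilde N}} \le \abs{f}_{\Lip}\,\abs{Du}_{\gF}\) holds a.e.; in particular \(\abs{D(f\circ u)}_{g_M^*\otimes g_{\Tilde N}} \in L^p(M)\), so \(f \circ u \in \dot W^{1,p}(M,\Tilde{N})\).

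The bulk of the work is genuinely routine here, since the heavy lifting (the norm characterization and the Lipschitz approximation) is already packaged in Proposition~\ref{thm_equi}. The one point that requires a little care — and the place I expect a referee to look — is the passage from the weak differentiability of \(\varphi \circ f \circ u\) to the colocal weak differentiability of \(f \circ u\) and the justification that \(Df\) is well defined a.e. along \(u\): one must check that on \((f\circ u)^{-1}(\Tilde U)\) the identity \(D(\varphi^*\circ\varphi\circ f\circ u) = D\varphi^*(\varphi(f\circ u))\circ D(\varphi\circ f\circ u)\) really follows from Proposition~\ref{propoChainColocal}, which needs \(f \circ u\) colocally weakly differentiable first — so the argument should be organized to first establish colocal weak differentiability of \(f \circ u\) by a separate (easier) application of the \(k=\min(m,\tilde n)\) case to scalar-valued \(g \in C^1_c(\Tilde{N},\R)\) precomposed with \(f\), and only then derive the quantitative bound. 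Getting this ordering right is the only real subtlety.
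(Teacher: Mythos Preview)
Your argument is correct (including the observation that the hypothesis should read \(f \in \Lip(N,\Tilde{N})\), and your remark about getting colocal weak differentiability of \(f\circ u\) in place \emph{before} invoking the chain rule), but it works harder than necessary: you are essentially re-running the proof of the implication \eqref{thm_equi3}\,\(\Rightarrow\)\,\eqref{thm_equi1} of Proposition~\ref{thm_equi} on the target \(\Tilde{N}\), via the almost isometric charts of Lemma~\ref{lemme_iso}.

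The paper does not spell out a proof, but the sentence ``Thanks to proposition~\ref{thm_equi}\ldots'' preceding the statement indicates the intended shortcut: verify condition~\eqref{thm_equi3} of Proposition~\ref{thm_equi} \emph{for the map \(f\circ u\colon M\to\Tilde{N}\)} with \(w=\abs{f}_{\Lip}\,\abs{Du}_{\gF}\), and conclude~\eqref{thm_equi1} directly. Concretely, choose \(k\ge\min(m,n)\) and \(k\ge\min(m,\tilde n)\); for every \(g\in C^1_c(\Tilde{N},\R^k)\) one has \(g\circ f\in\Lip(N,\R^k)\) with \(\abs{g\circ f}_{\Lip}\le\abs{g}_{\Lip}\,\abs{f}_{\Lip}\), so the \emph{last assertion} of Proposition~\ref{thm_equi} applied to \(u\) (legitimate since \(\abs{Du}_{\gF}\in L^p(M)\subset L^1_{\mathrm{loc}}(M)\)) gives that \((g\circ f)\circ u\) is weakly differentiable with
\[
  \abs{D(g\circ f\circ u)}_{\gFk}\le\abs{g\circ f}_{\Lip}\,\abs{Du}_{\gF}\le\abs{g}_{\Lip}\,\abs{f}_{\Lip}\,\abs{Du}_{\gF}.
\]
This is exactly condition~\eqref{thm_equi3} for \(f\circ u\) with \(w=\abs{f}_{\Lip}\,\abs{Du}_{\gF}\); the equivalence \eqref{thm_equi3}\,\(\Rightarrow\)\,\eqref{thm_equi1} then yields both the colocal weak differentiability of \(f\circ u\) and the pointwise bound in one stroke, without any \(\varepsilon\), any charts on \(\Tilde{N}\), or any countable cover. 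Your route and this one are logically equivalent --- the charts and the \((1+\varepsilon)^2\) are already absorbed inside the proof of Proposition~\ref{thm_equi} --- but the direct application avoids the ordering subtlety you flagged and the minor nuisance of matching the value of \(k\) against both \(\min(m,n)\) and \(\min(m,\tilde n)\) (in your write-up \(k=\min(m,\tilde n)\) may be too small to invoke the last assertion for \(u\) when \(\tilde n<\min(m,n)\); padding by zeros fixes this, but the direct argument sidesteps it entirely).
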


This generalizes a well-known property (\citelist{\cite{brezis}*{proposition 9.5}\cite{lieb_loss}*{theorem 6.16}\cite{willem_en}*{proposition 6.1.13}});
the corresponding chain rule is more delicate \cite{ambrosio}. 

We also obtain a characterization of Sobolev spaces by embeddings.

\begin{propo}\label{equi_isometric}
Let \(\iota : N \to \Tilde{N}\) be an isometric embedding. For every \(u : M \to N\), \(u \in \dot{W}^{1, p} (M, N)\) if and only if \(\iota \circ u \in \dot{W}^{1, p} (M, \Tilde{N})\).
\end{propo}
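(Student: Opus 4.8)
The plan is to reduce the statement to Proposition~\ref{thm_equi}, using that $\iota$ is an isometric embedding so that it preserves both the relevant norms of derivatives and the Lipschitz seminorm of test maps. First I would handle the easy direction: suppose $u \in \dot W^{1,p}(M,N)$. By Proposition~\ref{propositionCompositionLipschitz} (or Proposition~\ref{propoEmbeddingWeakDerivative} together with the pointwise estimate), $\iota \circ u$ is colocally weakly differentiable and $D(\iota \circ u) = D\iota \circ Du$ almost everywhere. Since $\iota$ is an isometric embedding, for almost every $x \in M$ the linear map $D\iota(u(x)) : T_{u(x)}N \to T_{\iota(u(x))}\Tilde N$ is an isometry onto its image, hence $\abs{D(\iota \circ u)(x)}_{g_M^* \otimes g_{\Tilde N}} = \abs{Du(x)}_{\gF}$ pointwise a.e.\ in $M$. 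Therefore $\abs{D(\iota \circ u)}_{g_M^* \otimes g_{\Tilde N}} = \abs{Du}_{\gF} \in L^p(M)$, so $\iota \circ u \in \dot W^{1,p}(M,\Tilde N)$.

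For the converse, suppose $\iota \circ u \in \dot W^{1,p}(M,\Tilde N)$. By Proposition~\ref{propoEmbeddingWeakDerivative}, $u$ is colocally weakly differentiable and $D(\iota \circ u) = D\iota \circ Du$ a.e.; the same isometry argument as above gives $\abs{Du}_{\gF} = \abs{D(\iota \circ u)}_{g_M^* \otimes g_{\Tilde N}} \in L^p(M)$, so $u \in \dot W^{1,p}(M,N)$. Note that here one does not even need the tubular-neighbourhood hypothesis "$\iota(N)$ closed" from Proposition~\ref{propoEmbeddingWeakDerivative} if one instead invokes Proposition~\ref{thm_equi}: setting $w = \abs{Du}_{\gF}$ and using that for every $f \in C^1_c(\Tilde N, \R^k)$ the restriction $f|_{\iota(N)} \circ \iota \in \mathrm{Lip}(N,\R^k)$ with $\abs{f \circ \iota}_{\Lip} \le \abs{f}_{\Lip}$ (isometry of $\iota$), one checks statement~\eqref{thm_equi3} of Proposition~\ref{thm_equi} for $\iota \circ u$, which yields $\abs{D(\iota \circ u)}_{\gFk} \le \abs{f}_{\Lip} w$ a.e.; the equivalence in Proposition~\ref{thm_equi} then transfers integrability back and forth.

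The main obstacle is purely bookkeeping: making precise that an isometric embedding of Riemannian manifolds induces, fibrewise and almost everywhere, an isometry $T_xM^* \otimes T_{u(x)}N \to T_xM^* \otimes T_{\iota(u(x))}\Tilde N$ for the metrics $\gF$ and $g_M^* \otimes g_{\Tilde N}$ — this is immediate from the definition of $\gF$ via an orthonormal basis of $T_xM$ together with the fact that $D\iota(u(x))$ preserves $g_N$, but one must be careful to distinguish the Riemannian distance $d_N$ (which $\iota$ need not preserve globally) from the infinitesimal metric $g_N$ (which it does preserve) when comparing Lipschitz seminorms. Choosing to route the converse through Proposition~\ref{thm_equi} rather than the "$\iota(N)$ closed" hypothesis of Proposition~\ref{propoEmbeddingWeakDerivative} is the cleaner option, since a general isometric embedding need not have closed image. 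The remaining steps are routine.
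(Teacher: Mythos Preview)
Your core argument is correct and matches the paper's proof exactly: forward direction via Proposition~\ref{propositionCompositionLipschitz}, converse via the first part of Proposition~\ref{propoEmbeddingWeakDerivative} together with the fibrewise isometry \(\abs{D\iota\circ Du}_{g_M^*\otimes g_{\Tilde N}}=\abs{Du}_{\gF}\). Your side remark about avoiding the closedness hypothesis is unnecessary and slightly confused: the implication ``\(\iota\circ u\) colocally weakly differentiable \(\Rightarrow\) \(u\) colocally weakly differentiable'' in Proposition~\ref{propoEmbeddingWeakDerivative} already holds for any embedding without assuming \(\iota(N)\) closed (only the reverse implication needs it), so the detour through Proposition~\ref{thm_equi} you sketch is superfluous---and as written it is circular, since you set \(w=\abs{Du}_{\gF}\) before having established that \(u\) is colocally weakly differentiable.
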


In contrast with proposition~\ref{propoEmbeddingWeakDerivative}, the equivalence does not require \(\iota (N)\) to be closed.

\begin{proof}[Proof of propesition~\ref{equi_isometric}]
If \(u \in \dot{W}^{1, p} (M, N)\), then by the chain rule (proposition~\ref{propositionCompositionLipschitz}), \(\iota \circ u \in \dot{W}^{1, p} (M, \Tilde{N})\). Conversely, if \(\iota \circ u \in \dot{W}^{1, p} (M, \Tilde{N})\), then \(\iota \circ u\) is colocally weakly differentiable. By the weakly differentiable embedding property (proposition~\ref{propoEmbeddingWeakDerivative}), \(u\) is colocally weakly differentiable. By the chain rule (proposition~\ref{propoChainColocal}), \(D (\iota \circ u) = D \iota \circ Du\), and since the embedding \(\iota\) is isometric, \(\abs{D u}_{\gF} = \abs{D (\iota \circ u)}_{g^*_M\otimes g_{\Tilde{N}}}\).
\end{proof}

In particular our intrinsic definition is equivalent with the definition given by any embedding of \(N\) in a Euclidean space; such an embedding is always possible by the Nash embedding theorem~\citelist{\cite{nash1954}\cite{nash1956}}.

%%% CONVERGENCE AND DISTANCE  --------------------------------------------
\section{Sequences of colocally weakly differentiable maps}
\label{sectconvCWDM}

%WEAK CONVERGENCE SOBOLEV 
In this section we consider sequences of maps between differentiable manifolds without any fixed Riemannian structure. 
%
%FIRST COMPACTNESS in measure
We will first state a compactness theorem in measure that will rely on two boundedness assumptions.

%bilocally L^1 bounded 
\begin{de}
A sequence \((\upsilon_\ell)_{\ell \in \N}\) of bundle morphisms from \(TM\) to \(TN\)  that covers a sequence \((u_\ell)_{\ell \in \N}\) of maps from \(M\) to \(N\) is \emph{bilocally \(L^1\)--bounded} if for every \(x \in M\) and every \(y \in N\), there exist local charts \(\psi : V \subseteq M \to \R^m\) and \(\varphi : U \subseteq N \to \R^n\) such that \(x \in V\), \(y \in U\) and
\[
  \sup_{\ell \in \N} \int_{\psi (V \cap u_\ell^{-1} (U))} \abs{D\varphi \circ \upsilon_\ell \circ D(\psi^{-1})}_{g^*_m\otimes g_n} < \infty.
\]
\end{de}

Equivalently,  the sequence \((\upsilon_\ell)_{\ell \in \N}\) is bilocally \(L^1\)--bounded if there exists a positive measure \(\mu\) on \(M\) and a continuous norm \(\abs{\cdot}\) on the vector bundle \(T^* M \otimes TN\) such that
\[
 \sup_{\ell \in \N} \int_{M} \abs{\upsilon_\ell}\,d\mu < \infty.
\]

%LOCALLY COMPACT
\begin{de}
A sequence \((u_\ell)_{\ell \in \N}\) of maps from \(M\) to \(N\) is \emph{locally compact in measure} if for every \(x \in M\) there exists a local chart \(\psi : V \subseteq M \to \R^m\) such that \(x \in V\) and for every \(\varepsilon > 0\) there exists a compact set \(K \subseteq N\) such that for every \( \ell \in \N\),
\[
  \mathcal{L}^m \bigl(\psi(u_\ell^{-1}(N \setminus K) )\bigr) \le \varepsilon.
\]
\end{de}

If \(N\) is compact, then this condition is trivially satisfied. 
In general, a sequence \((u_\ell)_{\ell \in \N}\) of maps from \(M\) to \(N\) is locally compact in measure if and only if there exists a positive measure \(\mu\) on \(N\), \(y \in N\) and a continuous distance \(d\) on \(N\) such that 
\[
 \lim_{r \to \infty} \sup_{\ell \in \N} \mu \bigl(u_\ell^{-1}(N \setminus B^d_r (y))\bigr) = 0.
\]

%PROPO RELLICH
\begin{propo}[Compactness in measure]
\label{propo_rellich}
Let \((u_\ell)_{\ell \in \N}\) be a sequence of colocally weakly differentiable maps from \(M\) to \(N\).
If the sequence \((D u_\ell)_{\ell \in \N}\) is bilocally \(L^1\)--bounded  and the sequence \((u_\ell)_{\ell \in \N}\) is locally compact in measure, then there exists a measurable map \(u : M \to N\) and a subsequence \((u_{\ell_k})_{k \in \N}\) that converges to \(u\) almost everywhere in \(M\).
\end{propo}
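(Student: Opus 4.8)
The plan is to reduce to the classical Rellich--Kondrachov compactness theorem in Euclidean space. Since the \(u_\ell\) are not weakly differentiable maps into \(\R^\nu\), I instead compose them with a countable family of \emph{compactly supported} \(C^1\) maps from \(N\) into Euclidean spaces, apply Rellich in local charts, extract a diagonal subsequence, and finally reconstruct the almost everywhere limit of \((u_\ell)\) from the limits of these compositions. Concretely, fix an exhaustion \(N = \bigcup_{p \in \N} L_p\) by compact sets with \(L_p \subseteq \operatorname{int} L_{p+1}\); for each \(p\) cover \(L_{p+1}\) by finitely many local charts \(\varphi^{(p)}_i \colon U^{(p)}_i \to \R^n\) (\(i \in I_p\)) with \(\overline{U^{(p)}_i}\) compact and contained in \(\operatorname{int} L_{p+2}\), take a subordinate partition of unity \(\rho^{(p)}_i \in C^1_c(N, \R)\) with \(\supp \rho^{(p)}_i \subseteq U^{(p)}_i\), \(\rho^{(p)}_i \ge 0\), \(\sum_{i} \rho^{(p)}_i \le 1\) on \(N\) and \(\sum_{i} \rho^{(p)}_i = 1\) on \(\operatorname{int} L_{p+1}\), and set
\[
  \Phi_p = \bigl(\rho^{(p)}_i \varphi^{(p)}_i,\ \rho^{(p)}_i\bigr)_{i \in I_p} \in C^1_c(N, \R^{D_p}).
\]
The family \((\Phi_p)_{p \in \N}\) has two elementary properties: \((a)\) \(\Phi_p\) is injective on \(\operatorname{int} L_{p+1}\) (Whitney-type patching: \(\Phi_p(z) = \Phi_p(z')\) forces \(\rho^{(p)}_{i_0}(z) = \rho^{(p)}_{i_0}(z') > 0\) for some \(i_0\), hence \(z, z' \in U^{(p)}_{i_0}\) and \(\varphi^{(p)}_{i_0}(z) = \varphi^{(p)}_{i_0}(z')\)); and \((b)\) if a sequence \((z_k)\) in \(N\) satisfies \(\Phi_p(z_k) \to \Phi_p(w)\) with \(w \in \operatorname{int} L_{p+1}\), then, reading the coordinates \(\rho^{(p)}_i\), \(\sum_i \rho^{(p)}_i(z_k) \to 1\), so \(z_k\) eventually lies in the compact set \(S_p := \supp\bigl(\sum_i \rho^{(p)}_i\bigr) \subseteq \operatorname{int} L_{p+2}\).

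Next, cover \(M\) by countably many relatively compact charts \(\psi_j \colon V_j \to \R^m\) with \(\overline{W_j} \subseteq V_j\) and \(\psi_j(W_j)\) an open ball. For fixed \(p, j\), the map \(\Phi_p \circ u_\ell \circ \psi_j^{-1}\) is weakly differentiable with derivative \(D\Phi_p(u_\ell) \circ Du_\ell \circ D\psi_j^{-1}\); since \(\Phi_p\) and \(D\Phi_p\) are bounded with compact support and \((Du_\ell)_{\ell \in \N}\) is bilocally \(L^1\)--bounded, a routine localization (covering \(\overline{W_j} \times \supp \Phi_p\) by the chart pairs given by the definition of bilocal \(L^1\)--boundedness and using a partition of unity on \(N\) together with changes of chart on \(M\)) shows that \((\Phi_p \circ u_\ell \circ \psi_j^{-1})_{\ell \in \N}\) is bounded in \(W^{1,1}(\psi_j(W_j), \R^{D_p})\), hence relatively compact in \(L^1(\psi_j(W_j), \R^{D_p})\). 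A diagonal extraction over the countable set of pairs \((p, j)\), followed by one further passage to an almost everywhere convergent subsequence, produces a subsequence \((u_{\ell_k})_{k \in \N}\) and measurable maps \(g_p \colon M \to \R^{D_p}\) such that \(\Phi_p \circ u_{\ell_k} \to g_p\) almost everywhere on \(M\), for every \(p\).

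It remains to show that \((u_{\ell_k}(x))_k\) converges in \(N\) for almost every \(x\). By local compactness in measure, the set of \(x \in W_j\) for which \((u_{\ell_k}(x))_k\) eventually leaves every compact subset of \(N\) is negligible: for \(\varepsilon > 0\) and the compact \(K(\varepsilon)\) from the definition, this set lies, in the chart \(\psi_j\), inside the countable increasing union \(\bigcup_{k_0} \bigcap_{k \ge k_0} \psi_j\bigl(u_{\ell_k}^{-1}(N \setminus K(\varepsilon))\bigr)\), all of whose members have Lebesgue measure at most \(\varepsilon\). Hence for almost every \(x\) the sequence \((u_{\ell_k}(x))_k\) has a subsequential limit \(y^* = y^*(x) \in N\); pick \(p_0\) with \(y^* \in \operatorname{int} L_{p_0+1}\). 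Passing to the subsequence converging to \(y^*\) and using that the whole sequences \(\Phi_{p_0} \circ u_{\ell_k}(x)\) and \(\Phi_{p_0+1} \circ u_{\ell_k}(x)\) converge, we get \(g_{p_0}(x) = \Phi_{p_0}(y^*)\) and \(g_{p_0+1}(x) = \Phi_{p_0+1}(y^*)\). By property \((b)\) with \(w = y^*\), \(u_{\ell_k}(x)\) eventually lies in the compact set \(S_{p_0} \subseteq \operatorname{int} L_{p_0+2}\); therefore every subsequence of \((u_{\ell_k}(x))_k\) has a sub-subsequence converging to some \(z^* \in \operatorname{int} L_{p_0+2}\), and then \(\Phi_{p_0+1}(z^*) = g_{p_0+1}(x) = \Phi_{p_0+1}(y^*)\), so \(z^* = y^*\) by the injectivity of \(\Phi_{p_0+1}\) on \(\operatorname{int} L_{p_0+2}\) (property \((a)\)). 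Thus \(u_{\ell_k}(x) \to y^*(x)\). Setting \(u(x) := y^*(x)\) where this holds and arbitrarily elsewhere gives \(u_{\ell_k} \to u\) almost everywhere, and \(u\) is measurable as an almost everywhere limit of measurable maps into \(N\) (for instance because \(\iota \circ u = \lim_k \iota \circ u_{\ell_k}\) almost everywhere for any embedding \(\iota \colon N \to \R^\nu\)).

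The step I expect to be the main obstacle is the last one: recovering an \(N\)--valued pointwise limit from the pointwise limits of the \emph{bounded} maps \(\Phi_p \circ u_{\ell_k}\), while both ruling out escape of \((u_{\ell_k}(x))_k\) to infinity and ensuring that its subsequential limit is unique. The mechanism that makes it work is carrying the cut-off functions \(\rho^{(p)}_i\) themselves as coordinates of \(\Phi_p\): their limiting value \(1\) at \(y^*\) confines \((u_{\ell_k}(x))_k\) to a fixed compact set, after which uniqueness of the limit follows from the injectivity of \(\Phi_{p_0+1}\) on \(\operatorname{int} L_{p_0+2}\). A secondary, purely technical difficulty is the chart-and-partition-of-unity bookkeeping needed to pass from bilocal \(L^1\)--boundedness to the \(W^{1,1}\) bound in a fixed chart.
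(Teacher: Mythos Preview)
Your proof is correct and follows essentially the same strategy as the paper's: compose with compactly supported \(C^1\) maps that carry partition-of-unity functions as extra coordinates, apply Rellich--Kondrashov in local charts, extract a diagonal subsequence, and use the partition-of-unity coordinates together with local compactness in measure to confine \((u_{\ell_k}(x))_k\) to a compact set and recover the pointwise limit.

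The only real difference is organizational. The paper works with a \emph{single} countable cover \((U_i)_{i\in I}\) of \(N\) by the extended local charts \(\varphi_i\) of lemma~\ref{lemme_diffeo} and one subordinate partition of unity \((\eta_i)_{i\in I}\), applies Rellich to each pair \((\varphi_i\circ u_\ell,\ \eta_i\circ u_\ell)\), and then simply observes that for a.e.\ \(x\) some \(\eta_i(u_{\ell_k}(x))\) has a positive limit, which forces \(u_{\ell_k}(x)\) eventually into \(U_i\) where \(\varphi_i\) is a diffeomorphism. This avoids your layered exhaustion \((L_p)_p\), the finite subcovers at each level, and the two-level injectivity argument with \(\Phi_{p_0}\) and \(\Phi_{p_0+1}\); your mechanism for uniqueness of the subsequential limit (property~\((a)\)) and for ruling out escape (property~\((b)\)) is replaced by the single observation that \(\lim_k \eta_i(u_{\ell_k}(x))>0\) already pins the tail inside one chart domain. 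Both arguments handle the ``bad set'' identically via the inclusion into \(\bigcup_j \bigcap_{k\ge j} u_{\ell_k}^{-1}(N\setminus K)\). Your version works, but the paper's single-cover formulation is shorter and avoids the sub-subsequence step.
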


\begin{proof}%[Proof of proposition~\ref{propo_rellich}]
Let \(\bigl((U_i, \varphi_i)\bigr)_{i \in I}\) be a family of extended local charts satisfying the conclusion of lemma~\ref{lemme_diffeo} such that \(\bigcup_{i \in I} U_i = N\) and \(I\) is countable.
Assume that \(\bigl(\eta_i\bigr)_{i \in I}\) is a \(C^1\)--partition of the unity subordinate to the covering \(\bigl(U_i\bigr)_{i \in I}\). We observe that in view of lemma~\ref{lemme_diffeo}, the set \(\Bar{U}_i\) is compact and hence \(\eta_i \in C^1_c (N,\R)\).

By assumption and by definition of the colocally weakly differentiability, for every \(i \in I\), the sequence \(((\varphi_i \circ u_\ell, \eta_i \circ u_\ell))_{\ell \in \N}\) is bounded in \(W^{1, 1}_{\mathrm{loc}} (M, \R^{n + 1})\). By the classical Rellich--Kondrashov compactness theorem \citelist{\cite{lieb_loss}*{theorem 8.9}\cite{brezis}*{theorem 9.16}\cite{willem_en}*{theorem 6.4.6}} and a diagonal argument, there exist a subsequence \((u_{\ell_k})_{k \in \N}\) and a negligible set \(E \subset M\) such that the sequence \((\varphi_i \circ u_{\ell_k} (x), \eta_i \circ u_{\ell_k} (x))_{k \in \N}\) converges in \(\R^{n + 1}\) for every \(x \in M \setminus E\) and every \(i \in I\).

We define the set
\[
F = \bigl\{x \in M \setminus E : \text{for every \(i \in I\), } \lim_{k \to \infty} \eta_i (u_{\ell_k} (x)) = 0 \bigr\}.
\]
For every compact set \(K \subseteq N\), we observe that 
\[
  F \subseteq \bigcup_{j \in \N} \bigcap_{k = j}^{\infty} u_{\ell_k}^{-1} (N \setminus K).
\]
Since the sequence \((u_\ell)_{\ell \in \N}\) is compact in measure, for every \(x \in M\), there is a local chart \(\psi : V \subseteq M \to \R^m\) such that \(x \in V\) and for every \(\varepsilon > 0\) there exists a compact set \(K \subseteq N\) such that for every \(k \in \N\),
\[
  \mathcal{L}^m \bigl(\psi (u_{\ell_k}^{-1}(N \setminus K) )\bigr) \le \varepsilon.
\]
Therefore,
\[
 \mathcal{L}^m (\psi (F \cap V)) \le \varepsilon.%
\]
Since \(\varepsilon > 0\) is arbitrary and \(M\) can be covered by countably many such charts, we conclude that the set \(F\) is negligible. 

We conclude by showing that \((u_{\ell_k})_{k \in \N}\) converges everywhere in \(M \setminus (E \cup F)\).
For every \(x \in M \setminus (E \cup F)\), by definition of the set \(F\), there exists \(i \in I\) such that \(\lim_{k \to \infty} \eta_i (u_{\ell_k} (x)) > 0 \). This implies, that for \(k \in \N\), large enough, \(u_{\ell_k} (x) \in U_i\). 
Since \(\varphi_i\) is a diffeomorphism on \(U_i\) and since the sequence \((\varphi_i (u_{\ell_k}(x)))_{k \in \N}\) converges, we define 
\(u (x) = {\varphi_i}_{\vert U_i}^{-1} \bigl(\lim_{k \to \infty} \varphi_i (u_{\ell_k}(x))\bigr)\) and we conclude that \((u_{\ell_k} (x))_{k \in \N}\) converges to \(u (x)\).
\end{proof}
%convergence in measure from M to N

A natural question is whether the limit of a sequence of colocally weakly differentiable maps is colocally weakly differentiable. We shall study this for sequences of maps converging in measure.

\begin{de} 
A sequence \((u_\ell)_{\ell \in \N}\) of maps from \(M\) to \(N\) \emph{converges locally in measure} to a map \(u : M \to N\) if for every \(x \in M\) there exists a local chart \(\psi : V \subseteq M \to \R^m\) such that \(x \in V\) and for every open set \(U \subseteq N\),
\[
  \lim_{\ell \to \infty} \mathcal{L}^m \bigl(\psi((u^{-1} (U) \cap V) \setminus u_\ell^{-1} (U))) = 0.
\]
\end{de}

If \(d\) is a continuous distance on \( N\) and if \(\mu\) is an absolutely continuous positive finite measure on \(M\), then the sequence \((u_\ell)_{\ell \in \N}\) converges to \(u\) locally in measure if and only if for every \(\varepsilon > 0\), 
\[
 \lim_{\ell \to \infty} \mu \left(\{x \in M : d (u_\ell (x), u (x)) > \varepsilon \}\right) = 0.
\]
This definition is consistent with the definition of convergence in measure of maps into  a metric space \cite{schwartz}*{d\'efinition 5.6.17}, which depends only on the topology of the space \cite{schwartz}*{th\'eor\`eme 5.6.21}.
Since the topologies of \(M\) and of \(N\) have a countable basis, it suffices to check the condition for a countable set of charts \(\psi: V \subseteq M \to \R^m\) and a countable collection of open sets \(U \subseteq N\). Hence, the convergence in measure for maps between manifolds induces a metrizable topology. Such a metric is given by 
\begin{equation}
\label{eqDistanceMeasure}
  \delta (u, v) = \int_M \frac{d (u, v)}{1+ d(u, v)} \,d\mu.
\end{equation}
Any Cauchy sequence with respect to \(\delta\) has a subsequence which is Cauchy almost everywhere 
\citelist{\cite{bogachev}*{exercise 4.7.60} \cite{schwartz}*{théorème 5.8.31}}.
Thus the space of measurable maps from \(M\) to \(N\) endowed with \(\delta\) is complete if and only if the space \((N, d)\) is complete.

The definition and the remarks applies directly to a sequence \((\upsilon_\ell)_{\ell \in \N}\) of bundle morphisms between \(TM\) and \(TN\) viewed as maps from \(M\) to \(T^*M \otimes TN\).

%bilocally uniformly integrable
\begin{de}
A sequence \((\upsilon_\ell)_{\ell \in \N}\) of bundle morphisms from \(TM\) to \(TN\) that covers a sequence \((u_\ell)_{\ell \in \N}\) of maps from \(M\) to \(N\) is \emph{bilocally uniformly integrable} if for every \(x \in M\) and every \(y \in N\), there exist local charts \(\psi : V \subseteq M \to \R^m\) and \(\varphi : U \subseteq N \to \R^n\) such that for every \(\varepsilon > 0\) there exists \(\delta > 0\) such that if \(W \subseteq \psi (V)\) and \(\mathcal{L}^m (W) \le \delta\) and \(\ell \in \N\)
\[
   \int_{W \cap \psi(u_\ell^{-1} (U))}  \abs{D\varphi \circ \upsilon_\ell \circ D(\psi^{-1})}_{g^*_m\otimes g_n} \le \varepsilon.
\]
\end{de}

Equivalently,  the sequence \((\upsilon_\ell)_{\ell \in \N}\) is bilocally uniformly integrable if there exists a positive absolutely continuous measure \(\mu\) on \(M\) and a continuous norm \(\abs{\cdot}\) on the vector bundle \(T^* M \otimes TN\) such that for every \(\varepsilon > 0\) there exists \(\delta > 0\) such that if \(V \subseteq M\), \(\mu (V) \le \delta\) and \(\ell \in \N\), then 
\[
   \int_{V}  \abs{\upsilon_\ell} \le \varepsilon.
\]

\begin{propo}[Closure property]
\label{propositionWeakClosure}
Let \((u_\ell)_{\ell \in \N}\) be a sequence of colocally weakly differentiable maps from \(M\) to \(N\).
If the sequence \((u_\ell)_{\ell \in \N}\) converges to \(u : M \to N\) locally in measure and if the sequence \((Du_\ell)_{\ell \in \N}\) is bilocally uniformly integrable, then the map \(u\) is colocally weakly differentiable, and for every map \(f \in C^1_c (N, \R)\), every local chart \(\psi : V \subseteq M \to \R^m\) and every test function \(v \in C^1_c (\psi(V), \R^m)\),
\[
  \lim_{\ell \to \infty} \int_{\psi (V)} \psh{ D (f \circ u_\ell \circ \psi^{-1})}{v} = \int_{\psi (V)} \psh{ D (f \circ u \circ \psi^{-1})}{v}.
\]
If moreover the sequence \((Du_\ell)_{\ell \in \N}\) converges to a bundle morphism \(\upsilon : TM \to TN\) locally in measure, then \(Du = \upsilon\).
\end{propo}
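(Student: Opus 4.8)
The plan is to reduce everything to the scalar Euclidean setting via the extended local charts of Lemma~\ref{lemme_diffeo} and then to pass to the limit in the weak formulation of the derivative using the hypotheses of local convergence in measure and bilocal uniform integrability. First I would fix a point $x \in M$, a point $y \in N$, and the extended local charts $U \subseteq N$, $\varphi \in C^1(N,\R^n)$, $\varphi^* \in C^1(\R^n,N)$ given by Lemma~\ref{lemme_diffeo}, together with a local chart $\psi : V \subseteq M \to \R^m$ around $x$; without loss of generality $V$ and $U$ may be chosen so that the bilocal uniform integrability bound holds on these charts. The goal is to show that $\varphi \circ u$ is weakly differentiable on $u^{-1}(U)$, or rather, since $\varphi$ is not a diffeomorphism globally, that $f \circ u$ is weakly differentiable for every $f \in C^1_c(N,\R)$; by Definition~\ref{cwdiff} this is exactly what must be proved, and the stated convergence of integrals is the quantitative version.

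The key steps, in order: (1) For $f \in C^1_c(N,\R)$, observe that $f \circ u_\ell \circ \psi^{-1}$ is weakly differentiable on $\psi(V)$ with $D(f\circ u_\ell \circ \psi^{-1}) = Df(u_\ell) \circ Du_\ell \circ D\psi^{-1}$ by Proposition~\ref{thm_weak}. (2) Since $Df$ is bounded (as $f$ has compact support and is $C^1$) and $(Du_\ell)$ is bilocally uniformly integrable, the sequence $(D(f\circ u_\ell \circ \psi^{-1}))_{\ell}$ is uniformly integrable on $\psi(V)$; this uses that outside $\psi(u_\ell^{-1}(\supp f))$ the integrand vanishes, and on that set one bounds $|Df(u_\ell)|$ by $\|Df\|_\infty$ and invokes the bilocal uniform integrability over the chart $\varphi$ covering a neighborhood of $\supp f$ (covering $\supp f$ by finitely many such charts and summing). (3) Since $(u_\ell)$ converges to $u$ locally in measure and $f$ is continuous, $f\circ u_\ell \circ \psi^{-1} \to f \circ u \circ \psi^{-1}$ locally in measure, hence (after passing to a subsequence) almost everywhere; moreover $(f \circ u_\ell \circ \psi^{-1})$ is uniformly bounded by $\|f\|_\infty$. (4) By the Dunford--Pettis weak compactness criterion in $L^1$, along a subsequence $D(f\circ u_\ell \circ \psi^{-1})$ converges weakly in $L^1_{\mathrm{loc}}(\psi(V))$ to some $g \in L^1_{\mathrm{loc}}$; passing to the limit in the identity $\int \langle D(f\circ u_\ell \circ \psi^{-1}), v\rangle = -\int (f \circ u_\ell \circ \psi^{-1}) \operatorname{div} v$ for $v \in C^1_c(\psi(V),\R^m)$ — the left side by weak $L^1$ convergence against the bounded test field $v$, the right side by dominated convergence — identifies $g = D(f \circ u \circ \psi^{-1})$ in the distributional sense, so $f\circ u \circ \psi^{-1} \in W^{1,1}_{\mathrm{loc}}$. (5) Since this holds for every $f$ and a countable covering of $M$ by such charts, $u$ is colocally weakly differentiable; and since the weak $L^1$-limit is independent of the subsequence (being always $D(f\circ u\circ \psi^{-1})$), the whole sequence converges, giving the displayed identity. (6) For the last assertion, if $Du_\ell \to \upsilon$ locally in measure, then along a subsequence $Du_\ell \to \upsilon$ a.e., hence $Df(u_\ell)\circ Du_\ell \circ D\psi^{-1} \to Df(u)\circ \upsilon \circ D\psi^{-1}$ a.e.\ (using also $u_\ell \to u$ a.e.\ and continuity of $Df$); combined with uniform integrability and Vitali's theorem this gives convergence in $L^1_{\mathrm{loc}}$, so the weak limit equals $Df(u)\circ \upsilon\circ D\psi^{-1}$ a.e.; comparing with step (5), $Df(u)\circ Du = Df(u)\circ \upsilon$ a.e.\ on $u^{-1}(U)$, and since $Df = D(f\circ\varphi^*)\circ D\varphi$ with $D\varphi$ invertible on $U$, a suitable choice of $f$ (e.g.\ coordinate functions through $\varphi^*$) yields $Du = \upsilon$ a.e.\ on $u^{-1}(U)$, hence on all of $M$ by covering.

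The main obstacle I expect is step (2): obtaining \emph{uniform} integrability of $(D(f\circ u_\ell \circ \psi^{-1}))_\ell$ from the bilocal uniform integrability of $(Du_\ell)$. The subtlety is that bilocal uniform integrability is phrased relative to a \emph{fixed} chart $\varphi$ over a neighborhood $U$ of a single point $y$, whereas $\supp f$ is a compact set possibly not contained in any one such $U$; one must cover $\supp f$ by finitely many charts $(U_j,\varphi_j)$, use a subordinate partition of unity to write $f = \sum_j f_j$ with $\supp f_j \subseteq U_j$, handle each $f_j$ with its own chart, and check that the $\delta$'s combine correctly. A second, more technical point is that the uniform integrability hypothesis controls $|D\varphi_j \circ Du_\ell \circ D\psi^{-1}|$ on $\psi(u_\ell^{-1}(U_j))$, and one needs to transfer this to a bound on $|D(f_j \circ u_\ell \circ \psi^{-1})| = |Df_j(u_\ell)\circ Du_\ell \circ D\psi^{-1}|$; this is fine because $Df_j = D g_j \circ D\varphi_j$ on $U_j$ for $g_j = f_j \circ \varphi_j^*|_{\varphi_j(U_j)} \in C^1$ with bounded derivative on the relevant compact set, so $|Df_j(u_\ell)\circ Du_\ell \circ D\psi^{-1}| \le \|Dg_j\|_\infty |D\varphi_j \circ Du_\ell \circ D\psi^{-1}|$ pointwise on $\psi(u_\ell^{-1}(U_j))$ and vanishes elsewhere. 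Once these bookkeeping issues are dispatched, the remaining steps are standard applications of Dunford--Pettis, Vitali, and dominated convergence.
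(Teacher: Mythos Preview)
Your proof is correct and follows the same overall strategy as the paper's, with a minor difference in packaging. Where you invoke Dunford--Pettis to extract a weakly \(L^1\)-convergent subsequence of \((D(f\circ u_\ell \circ \psi^{-1}))_\ell\) and then identify the limit via integration by parts, the paper instead first shows directly (via the same integration-by-parts identity and the \(L^1\) convergence of \(f\circ u_\ell \circ \psi^{-1}\)) that the distributional gradient of \(f\circ u \circ \psi^{-1}\) is a Radon measure \(\mu_{f,\psi}\), and then proves absolute continuity of \(\mu_{f,\psi}\) by bounding \(\abs{\mu_{f,\psi}}(W) \le \liminf_\ell \int_W \abs{D(f\circ u_\ell \circ \psi^{-1})}\) on open sets \(W\) and invoking bilocal uniform integrability. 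Your route is arguably more direct; the paper's avoids the subsequence bookkeeping at the cost of the Riesz representation step. Your careful discussion of step~(2)---covering \(\supp f\) by finitely many extended charts \((U_j,\varphi_j)\), using a partition of unity, and transferring the bilocal bounds via \(\abs{Df_j(u_\ell)\circ Du_\ell} \le \norm{Dg_j}_\infty \abs{D\varphi_j\circ Du_\ell}\)---is more explicit than the paper's treatment, which compresses this into a single Lipschitz-type inequality. For the final assertion both arguments coincide: convergence in measure of \((Du_\ell)\) plus uniform integrability gives \(L^1_{\mathrm{loc}}\) convergence of \(Df\circ Du_\ell \circ D\psi^{-1}\), and comparison with the already-identified weak limit forces \(Du = \upsilon\).
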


In particular, under the additional condition of bilocally uniform integrability of the sequence of colocal weak derivatives, the map given by proposition~\ref{propo_rellich} is colocally weakly differentiable.

\begin{proof}[Proof of proposition~\ref{propositionWeakClosure}]
Following classical argument \cite{willem_en}*{theorem 6.1.7}, given a local chart \(\psi : V \subseteq M \to \R^m\) and \(f \in C^1_c (N, \R)\), we define the linear functional \(F_{f, \psi}\) on \(C^1_c (\psi (V), \R^m)\) for every test function \(v \in C^1_c (\psi (V), \R^m)\) by
\[
 \psh{F_{f, \psi}}{v} = -\int_{\psi (V)} (f \circ u \circ \psi^{-1}) \operatorname{div} v.
\]
Let \(K \subset \psi (V)\) be compact. Since the sequence \((f \circ u_\ell \circ \psi^{-1})_{\ell \in \N}\) converges to \(f \circ u_\ell \circ \psi^{-1}\) in \(L^1  (K)\), if \(\supp v \subset K\),
\[
\begin{split}  
 \abs{\psh{F_{f, \psi}}{v}} &= \Bigabs{\int_{K} (f \circ u \circ \psi^{-1}) \operatorname{div} v} 
  = \lim_{\ell \to \infty} \Bigabs{\int_{K} (f \circ u_\ell \circ \psi^{-1}) \operatorname{div} v} \\
 & = \lim_{\ell \to \infty} \Bigabs{\int_{K} \psh{D (f \circ u_\ell \circ \psi^{-1})}{v}}
 \le \norm{v}_{L^\infty} \liminf_{\ell \to \infty} \int_{K} \abs{f}_{\Lip}\abs{D u_\ell}_{\gF}.
\end{split} 
\]
Therefore \(F_{f, \psi}\) is represented by a vector-valued Radon measure \(\mu_{f, \psi}\) on \(\psi (V)\):
\[
 \psh{F_{f, \psi}}{v} = \int_{\psi (V)} v \,  d\mu_{f, \psi}.
\]
We observe that for every open set \(W \subseteq \psi(V)\),
\[
 \abs{\mu_{f, \psi}} (W) \le \liminf_{\ell \to \infty} \int_{W} \abs{D (f \circ u_\ell \circ \psi^{-1})}_{g^*_m\otimes g_1}.
\]
By the uniform integrability assumption, we conclude that the measure \(\abs{\mu_{f, \psi}}\) is absolutely continuous with respect to the Lebesgue measure on every compact set \(K \subset \psi (V)\). The measure \(\mu_{f, \psi}\) can thus be represented by a vector-field in \(L^1_\mathrm{loc} (\psi (V))\). In particular the map \(f \circ u \circ \psi^{-1}\) is weakly differentiable and 
\[
  \lim_{\ell \to \infty} \int_{\psi (V)} \psh{ D (f \circ u_\ell \circ \psi^{-1})}{v} = \int_{\psi (V)} \psh{ D (f \circ u \circ \psi^{-1})}{v}.
\]

Finally, if \((Du_\ell)_{\ell \in \N}\) converges to a bundle morphism \(\upsilon : TM \to TN\) locally in measure , then the sequence \((Df \circ Du_\ell \circ D\psi^{-1})_{\ell \in \N}\) converges to \(D f \circ \upsilon \circ D \psi^{-1}\) in measure on \(\psi (V)\) . Since the sequence \((Df \circ Du_\ell \circ D\psi^{-1})_{\ell \in \N}\) is uniformly integrable on every compact set \(K \subset \psi (V)\), we conclude that \cite{bogachev}*{theorem 4.5.4}
\[
  \lim_{\ell \to \infty} \int_{\psi (V)} \psh{ D (f \circ u_\ell \circ \psi^{-1})}{v}
  = \int_{\psi (V)} \psh{D f \circ \upsilon \circ D \psi^{-1}}{v} .
\]
Since \(f \in C^1_c (N, \R)\), the chart \(\psi : V \to \R^m\) and \(v \in C^1_c (\psi(V),\R^m)\) are arbitrary, \(D u = \upsilon\).
\end{proof}

\section{Sequences of Sobolev maps between Riemannian manifolds}

\subsection{Weak convergence of Sobolev maps}
We now consider sequences of Sobolev maps. As a consequence of the results in the previous section, we have a Rellich-Kondrashov type compactness theorem.

%Rellich-Kondrashov type
\begin{propo}[Rellich--Kondrashov for Sobolev maps]
Let \((u_\ell)_{\ell \in \N}\) be a sequence of colocally weakly differentiable maps from \(M\) to \(N\), \(v : M \to N\) be measurable.
If \((N, d)\) is complete, if there exist \(p \in [1, \infty)\) such that 
\[
 \sup_{\ell \in \N} \int_{M} \abs{D u_\ell}_{\gF}^p  < \infty 
\]
and \(q \in [1, \infty)\) such that
\[
  \sup_{\ell \in \N} \int_{M} d (u_\ell, v)^q < \infty
\]
then there is a subsequence \((u_{\ell_k})_{k \in \N}\) that converges to \(u : M \to N\) almost everywhere in \(M\).
\end{propo}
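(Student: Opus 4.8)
The plan is to deduce the statement from the compactness in measure result (proposition~\ref{propo_rellich}) by checking its two hypotheses for the sequence \((u_\ell)_{\ell \in \N}\): that the sequence of colocal weak derivatives \((D u_\ell)_{\ell \in \N}\) is bilocally \(L^1\)--bounded and that \((u_\ell)_{\ell \in \N}\) is locally compact in measure. Throughout, \(\mu_M\) denotes the Riemannian measure on \(M\); the only elementary facts used are that for a relatively compact chart \(\psi : V \subseteq M \to \R^m\) one has \(\mu_M (V) < \infty\) and the pushforward of \(\mathcal{L}^m\) by \(\psi^{-1}\) is comparable to \(\mu_M\vert_V\), with constants depending only on \(\overline{V}\).

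First I would verify the bilocal \(L^1\)--boundedness. Given \(x \in M\) and \(y \in N\), choose relatively compact charts \(\psi : V \subseteq M \to \R^m\) with \(x \in V\) and \(\varphi : U \subseteq N \to \R^n\) with \(y \in U\). On the compact sets \(\overline{V}\) and \(\overline{U}\) the maps \(D\varphi\) and \(D\psi^{-1}\) are bounded and the Euclidean norm computed in the charts is comparable to \(\abs{\cdot}_{\gF}\), so there is a constant \(C\), independent of \(\ell\), with \(\abs{D\varphi \circ D u_\ell \circ D\psi^{-1}}_{g^*_m\otimes g_n} \le C \abs{D u_\ell}_{\gF}\) on \(V \cap u_\ell^{-1}(U)\). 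Since \(p \ge 1\) and \(\mu_M(V) < \infty\), Hölder's inequality gives
\[
  \int_V \abs{D u_\ell}_{\gF}\, d\mu_M \le \mu_M(V)^{1 - 1/p}\Bigl(\sup_{\ell \in \N} \int_M \abs{D u_\ell}_{\gF}^p\Bigr)^{1/p},
\]
which is finite and independent of \(\ell\); transferring the integral to the chart with the comparability of measures yields \(\sup_{\ell \in \N} \int_{\psi(V \cap u_\ell^{-1}(U))} \abs{D\varphi \circ D u_\ell \circ D\psi^{-1}}_{g^*_m\otimes g_n} < \infty\), as required.

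Next I would check that \((u_\ell)_{\ell \in \N}\) is locally compact in measure. Fix \(x \in M\), a relatively compact chart \(\psi : V \subseteq M \to \R^m\) with \(x \in V\), and a point \(y_0 \in N\). Since \((N, d)\) is complete, every closed bounded subset of \(N\) is compact, so \(K_r := \overline{B^d_r(y_0)}\) is compact for each \(r\) (if \(N\) is disconnected one argues componentwise, there being at most countably many components and one reference point being picked in each). Given \(\varepsilon > 0\), Chebyshev's inequality and the hypothesis give, for every \(r > 0\) and every \(\ell \in \N\),
\[
  \mu_M\bigl(\{ x' \in V : d(u_\ell(x'), v(x')) > r/2 \}\bigr) \le \Bigl(\tfrac{2}{r}\Bigr)^{q} \sup_{\ell \in \N} \int_M d(u_\ell, v)^q,
\]
which tends to \(0\) as \(r \to \infty\), uniformly in \(\ell\); moreover \(\mu_M(\{ x' \in V : d(v(x'), y_0) > r/2 \}) \to 0\) as \(r \to \infty\), since \(\mu_M(V) < \infty\) and \(v\) is measurable, hence finite \(\mu_M\)--almost everywhere on \(V\). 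By the triangle inequality \(\{ x' \in V : u_\ell(x') \notin K_r \}\) lies in the union of the two sets above, so for \(r\) large enough, independently of \(\ell\), its \(\mu_M\)--measure and therefore \(\mathcal{L}^m(\psi(u_\ell^{-1}(N \setminus K_r)))\) is at most \(\varepsilon\); this is exactly local compactness in measure. Finally, applying proposition~\ref{propo_rellich} produces a measurable map \(u : M \to N\) and a subsequence \((u_{\ell_k})_{k \in \N}\) converging to \(u\) almost everywhere. I expect the main obstacle to be this last verification: one must extract, uniformly in \(\ell\), a single compact ``reservoir'' in \(N\) capturing almost all the values of \(u_\ell\), and the only inputs available are the integral bound relative to the moving reference map \(v\) together with the completeness of \((N, d)\) — which is precisely what forces the closed balls of \(N\) to be compact.
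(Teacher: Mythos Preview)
Your proposal is correct and follows essentially the same route as the paper: verify the two hypotheses of proposition~\ref{propo_rellich} (bilocal \(L^1\)--boundedness of \((Du_\ell)\) via H\"older, and local compactness in measure via Hopf--Rinow and the \(L^q\)--bound on \(d(u_\ell,v)\)), then apply that proposition. The paper's own proof is much terser --- it asserts both hypotheses in one sentence each --- whereas you spell out the Chebyshev/triangle-inequality argument that actually uses the reference map \(v\) and the \(q\)-bound to obtain a uniform compact reservoir; this is exactly the step the paper leaves implicit.
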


\begin{proof}
Since the metric space \((N, d)\) is complete, for every \(y \in N\) and \(r \in \R\), the closed ball \(\Bar{B}^N_r (y)\) is compact. In particular the sequence \((u_\ell)_{\ell \in \N}\) is locally compact in measure.

On the other hand, the sequence \((D u_\ell)\) is bilocally \(L^1\)--bounded, and therefore by proposition~\ref{propo_rellich}, there exists a measurable map \(u : M \to N\) and a subsequence \((u_{\ell_k})_{k \in \N}\) that converges to \(u\) almost everywhere in \(M\). 
\end{proof}

As in classical theory, we have the following closure property, which will play an important role in the completeness of Sobolev spaces.

\begin{propo}[Weak closure property for Sobolev maps]
\label{prop_closure}
Let \(p  \ge 1\) and let \((u_\ell)_{\ell \in \N}\) be a sequence of colocally weakly differentiable maps from \(M\) to \(N\).
Assume that the sequence \((u_\ell)_{\ell \in \N}\) converges to \(u : M \to N\) locally in measure, that 
\[
 \liminf_{\ell \to \infty} \int_{M} \abs{D u_\ell}_{\gF}^p < \infty,
\]
and, if \(p = 1\), that the sequence \((D u_\ell)_{\ell \in \N}\) is locally uniformly integrable.
Then \(u \in \dot{W}^{1, p} (M, N)\),
\[
 \int_{M} \abs{D u}_{\gF}^p \le \liminf_{\ell \to \infty} \int_{M} \abs{D u_\ell}_{\gF}^p,
\]
and for every \(f \in C^1_c (N, \R)\) and every section \(v : M \to  TM\), such that \(\abs{v}_{g_M} \in L^{p/(p - 1)} (M)\),
\[
  \lim_{\ell \to \infty} \int_{M} \psh{ D (f \circ u_\ell)}{v} = \int_{M} \psh{ D (f \circ u)}{v}.
\]
If moreover the sequence \((Du_\ell)_{\ell \in \N}\) converges to a bundle morphism \(\upsilon : TM \to TN\) locally in measure, then \(Du = \upsilon\).
\end{propo}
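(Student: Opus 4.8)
The plan is to reduce the statement to the closure property for colocally weakly differentiable maps (Proposition~\ref{propositionWeakClosure}), to the characterisation of \(\abs{Du}_{\gF}\) of Proposition~\ref{thm_equi}, and to the weak lower semicontinuity of the \(L^p\)--norm. First I would pass to a subsequence (not relabelled) along which \(\int_M\abs{D u_\ell}_{\gF}^p\to L:=\liminf_{\ell\to\infty}\int_M\abs{D u_\ell}_{\gF}^p\). Along it the sequence \((D u_\ell)_{\ell\in\N}\) is bilocally uniformly integrable: when \(p>1\) this follows from the uniform local \(L^p\)--bound by Hölder's inequality, and when \(p=1\) it is the hypothesis. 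Hence Proposition~\ref{propositionWeakClosure} applies: the map \(u\) is colocally weakly differentiable, the local weak convergence \(\int_{\psi(V)}\psh{D(f\circ u_\ell\circ\psi^{-1})}{v}\to\int_{\psi(V)}\psh{D(f\circ u\circ\psi^{-1})}{v}\) holds for every \(f\in C^1_c(N,\R)\), every chart \(\psi\colon V\to\R^m\) and every \(v\in C^1_c(\psi(V),\R^m)\), and if \(D u_\ell\to\upsilon\) locally in measure then \(D u=\upsilon\) --- this last conclusion passes to subsequences and so holds for the original sequence.

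Next I would build a majorant \(w\). For every chart domain \(V\subseteq M\) with \(\Bar{V}\) compact the sequence \((\abs{Du_\ell}_{\gF})_{\ell\in\N}\) is bounded in \(L^p(V)\) and, when \(p=1\), uniformly integrable, so a subsequence converges weakly in \(L^p(V)\) by reflexivity if \(p>1\) and by the Dunford--Pettis theorem if \(p=1\). Covering \(M\) by countably many such \(V\) and diagonalising yields a measurable \(w\colon M\to[0,\infty)\) with \(\abs{Du_\ell}_{\gF}\rightharpoonup w\) in \(L^p(V)\) on each such \(V\); weak lower semicontinuity of the \(L^p(V)\)--norm and an exhaustion of \(M\) give \(\int_M w^p\le L\). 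With \(k=\min(m,n)\), the chain rule of Proposition~\ref{thm_weak} gives \(D(f\circ u_\ell)=Df(u_\ell)\circ Du_\ell\), hence \(\abs{D(f\circ u_\ell)}_{\gFk}\le\abs{f}_{\Lip}\abs{Du_\ell}_{\gF}\), for every \(f\in C^1_c(N,\R^k)\). A \(C^1\)--partition of unity turns the local convergence of Proposition~\ref{propositionWeakClosure} (applied componentwise) into \(D(f\circ u_\ell)\rightharpoonup D(f\circ u)\) tested against compactly supported \(C^1\) sections, and, \((D(f\circ u_\ell))_{\ell\in\N}\) being bounded in \(L^p_{\mathrm{loc}}\) --- respectively uniformly integrable on compact sets when \(p=1\) --- this extends to bounded compactly supported measurable test sections. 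Passing to the limit in the corresponding pairings gives \(\abs{D(f\circ u)}_{\gFk}\le\abs{f}_{\Lip}\,w\) almost everywhere for every such \(f\), and Proposition~\ref{thm_equi} (its third condition, now verified, implies the first) yields \(\abs{Du}_{\gF}\le w\) almost everywhere. Therefore \(u\in\Wp\) and \(\int_M\abs{Du}_{\gF}^p\le\int_M w^p\le L\).

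For the weak convergence statement, gluing the local convergence of Proposition~\ref{propositionWeakClosure} by a \(C^1\)--partition of unity subordinate to a chart cover gives \(\int_M\psh{D(f\circ u_\ell)}{v}\to\int_M\psh{D(f\circ u)}{v}\) for every \(f\in C^1_c(N,\R)\) and every compactly supported \(C^1\) section \(v\colon M\to TM\). When \(p>1\), the sequence \((D(f\circ u_\ell))_{\ell\in\N}\) is bounded in \(L^p(M,T^*M)\), so a density argument extends the convergence to all sections \(v\) with \(\abs{v}_{g_M}\in L^{p/(p-1)}(M)\); when \(p=1\), one uses instead that \((D(f\circ u_\ell))_{\ell\in\N}\) is uniformly integrable (from the chain rule bound and the hypothesis), hence weakly precompact in \(L^1\), together with the testing against \(C^1_c\) sections just obtained, which pins down the unique weak \(L^1\)--limit as \(D(f\circ u)\), whence convergence against \(L^\infty(M)\) sections follows. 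Finally, since the limit \(\int_M\psh{D(f\circ u)}{v}\) does not depend on the extracted subsequence, the usual subsequence principle transfers all of the above to the original sequence \((u_\ell)_{\ell\in\N}\).

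The main obstacle is the case \(p=1\): the lack of reflexivity forces one to replace weak \(L^p\)--compactness by the Dunford--Pettis theorem wherever it is used, which is why the uniform integrability hypothesis enters, and the test sections \(v\in L^\infty(M)\) are no longer approximable by compactly supported ones, so the passage from the purely local assertion of Proposition~\ref{propositionWeakClosure} to the global weak convergence must go through uniform integrability and the uniqueness of the weak \(L^1\)--limit rather than through density. A second, more technical point is the correct use of Proposition~\ref{thm_equi}: one has to convert the whole family of inequalities \(\abs{D(f\circ u)}_{\gFk}\le\abs{f}_{\Lip}w\), indexed by \(f\in C^1_c(N,\R^k)\), into the single pointwise bound \(\abs{Du}_{\gF}\le w\), which is exactly what that proposition is designed to deliver.
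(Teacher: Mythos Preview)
Your approach is essentially the same as the paper's: reduce to Proposition~\ref{propositionWeakClosure} for colocal weak differentiability and the identification \(Du=\upsilon\), extract a weak \(L^p\)-limit \(w\) of \((\abs{Du_\ell}_{\gF})_{\ell}\), pass the pointwise chain-rule bound \(\abs{D(f\circ u_\ell)}_{\gFk}\le\abs{f}_{\Lip}\abs{Du_\ell}_{\gF}\) to the limit via duality to get \(\abs{D(f\circ u)}_{\gFk}\le\abs{f}_{\Lip}w\), invoke Proposition~\ref{thm_equi} for \(\abs{Du}_{\gF}\le w\), and conclude by weak lower semicontinuity of the \(L^p\)-norm. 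The paper is more terse---it writes ``Proposition~\ref{propositionWeakClosure} applies and it remains to prove that \(\abs{Du}_{\gF}\in L^p(M)\)'' and passes to the limit directly through the integration-by-parts identity \(\int_M\psh{f\circ u_\ell}{\operatorname{div} v}\) rather than first upgrading the local weak convergence---so your partition-of-unity gluing and your explicit Dunford--Pettis treatment of the case \(p=1\) are elaborations the paper leaves implicit, but the skeleton is identical.
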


The Euclidean counterpart of this property is well-known \cite{willem_en}*{theorem 6.1.7}.
The uniform integrability assumption is essential for \(p = 1\): otherwise the closure property fails already for classical Sobolev maps between Euclidean spaces.

\begin{proof}[Proof of proposition~\ref{prop_closure}]%
By the boundedness and bilocally uniform integrability assumptions, the sequence of bundle morphisms \((Du_\ell)_{\ell \in \N}\) is bilocally uniformly integrable. Proposition~\ref{propositionWeakClosure} applies and it remains to prove that \(\abs{Du}_{\gF} \in L^p (M)\).
By the boundedness and bilocally uniform integrability assumptions, up to a subsequence, the sequence of functions \((\abs{D u_\ell}_{\gF})_{\ell \in \N}\) converges weakly to some \(w \in L^p (M)\).
For every \(f \in C^1_c (N, \R^k)\) with \(k=\min (m, n)\) and \(\ell \in \N\), we have 
\[
  \abs{D(f\circ u_\ell)}_{\gFk} \le \abs{f}_{\Lip}\abs{Du_\ell}_{\gF}. 
\]
Hence, for every \(v \in C^1_c(M, TM \otimes \R^k)\),
\[
\begin{split}
 -\int_{M} \psh{f\circ u_\ell}{\operatorname{div} v} = \int_{M} \psh{D(f\circ u_\ell)}{v} &\le  \int_{M} \abs{D(f\circ u_\ell)}_{\gFk} \abs{v}_{g^*_M \otimes g_k}\\ & \le \abs{f}_{\Lip} \int_{M}\abs{Du_\ell}_{\gF} \abs{v}_{g^*_M \otimes g_k}
\end{split}
\]
and thus 
\[
 - \int_{M} \psh{f\circ u}{\operatorname{div} v} \le \abs{f}_{\Lip} \int_{M} \abs{v}_{g^*_M \otimes g_k} w.
\]
Since the map \(u\) is colocally weakly differentiable, we deduce that 
\[
 \int_{M} \psh{D (f\circ u)}{v} \le \abs{f}_{\Lip} \int_{M} \abs{v}_{g^*_M \otimes g_k} w.
\]
Since \(v \in C^1_c(M, TM \otimes \R^k)\) is arbitrary we conclude that 
\[
 \abs{D (f \circ u)}_{\gFk} \le \abs{f}_{\Lip} w
\]
almost everywhere in \(M\). By the characterization of the norm of the derivative of proposition~\ref{thm_equi}, \(\abs{D u}_{\gF} \le w\) almost everywhere in \(M\) and thus by lower semicontinuity  of the norm under weak convergence \citelist{\cite{brezis}*{proposition 3.5} \cite{lieb_loss}*{theorem 2.11}\cite{willem_en}*{theorem 5.4.6}}
\[
  \int_{M} \abs{Du}_{\gF}^p \le \int_{M} w^p \le \liminf_{\ell \to \infty} \int_{M} \abs{Du_\ell}_{\gF}^p.\qedhere
\]
\end{proof}

%CONVERGENCE IN SOBOLEV 
\subsection{Strong convergence in Sobolev spaces}

We define now a notion of convergence in homogeneous Sobolev spaces \(\dot{W}^{1, p} (M, N)\).

%CONVERGENCE IN SOBOLEV
\begin{de}
\label{def_convergence_sobolev}
Let \(p \in [1,\infty)\).
The sequence \((u_\ell)_{\ell \in \N}\) in \(\dot{W}^{1, p} (M, N)\) \emph{converges strongly} to \(u\in \dot{W}^{1, p} (M, N)\) \emph{in \(\dot{W}^{1, p} (M, N)\)} if \((Du_\ell)_{\ell \in \N}\) converges to \(Du\) locally in measure and \((\abs{Du_\ell}_{\gF})_{\ell \in \N}\) converges to \(\abs{Du}_{\gF}\) in \(L^p (M)\).
\end{de}

The convergence of definition~\ref{def_convergence_sobolev} is induced by the distance
\[
  \dot{\delta}_{1,p} (u, v) = \delta (Du, Dv) + \norm{\abs{Du}_{\gF} - \abs{Dv}_{\gF}}_{L^p (M)},
\]
where \(\delta\) is a distance of the form \eqref{eqDistanceMeasure}.
In fact, a sequence \((u_\ell)_{\ell \in \N}\) converges strongly to \(u\) in \(\dot{W}^{1, p} (M, N)\) if and only if \((Du_\ell)_{\ell \in \N}\) converges to \(Du\) locally in measure and 
\[\lim_{\ell \to \infty} \int_M \abs{Du_\ell}_{\gF}^p = \int_M \abs{Du}_{\gF}^p.\]
This follows from the Euclidean counterpart \citelist{\cite{bogachev}*{proposition 4.7.30}\cite{willem_en}*{theorem 4.2.6}\cite{brezis}*{exercise 4.17.3}}. 

\begin{propo}
If for almost every \(x \in M\), \((\{x\} \times N, d)\) is complete and if the projection \(\pi_{M \times N} : T^*M \otimes TN \to M \times N \subset T^*M \otimes TN\) is nonexpansive with respect to \(d\),
then the metric space \((\dot{W}^{1, p} (M, N), \dot{\delta}_{1, p})\) is complete.
\end{propo}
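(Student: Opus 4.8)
The plan is to prove that every Cauchy sequence \((u_\ell)_{\ell \in \N}\) in \(\bigl(\dot{W}^{1, p}(M, N), \dot{\delta}_{1, p}\bigr)\) has a subsequence converging strongly in \(\dot{W}^{1, p}(M, N)\); since \(\dot{\delta}_{1, p}\) is a (pseudo\nobreakdash-)metric, the whole sequence will then converge to the same limit. Since each of the two terms of \(\dot{\delta}_{1, p}\) is nonnegative, the sequence \((D u_\ell)_{\ell \in \N}\), viewed as a sequence of measurable maps from \(M\) to \(T^* M \otimes TN\), is Cauchy for the distance \(\delta\) of the form~\eqref{eqDistanceMeasure}, and \((\abs{Du_\ell}_{\gF})_{\ell \in \N}\) is Cauchy in \(L^p (M)\). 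By completeness of \(L^p (M)\) there is \(w \in L^p (M)\) with \(\abs{Du_\ell}_{\gF} \to w\) in \(L^p (M)\); passing to a subsequence without relabelling and using that any \(\delta\)-Cauchy sequence of maps has an almost everywhere Cauchy subsequence, I may moreover assume that \(\abs{Du_\ell}_{\gF} \to w\) almost everywhere and that \((Du_\ell (x))_{\ell \in \N}\) is \(d\)-Cauchy for almost every \(x \in M\). In particular \(\sup_{\ell \in \N} \int_M \abs{Du_\ell}_{\gF}^p < \infty\); and when \(p = 1\), the \(L^1 (M)\)-convergence of \((\abs{Du_\ell}_{\gF})_{\ell \in \N}\) shows, through the Riemannian measure \(\mu_M\) and the continuous norm \(\abs{\cdot}_{\gF}\) on \(T^* M \otimes TN\), that \((Du_\ell)_{\ell \in \N}\) is bilocally uniformly integrable.

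Next I would build the limit pointwise. Since \(\pi_{M \times N}\) is nonexpansive for \(d\), for almost every \(x \in M\) the sequence \(\bigl((x, u_\ell (x))\bigr)_{\ell \in \N}\) is \(d\)-Cauchy in \(\{x\} \times N\), hence converges by completeness of \((\{x\} \times N, d)\); this defines a measurable map \(u : M \to N\) with \(u_\ell \to u\) almost everywhere, and in particular locally in measure. For almost every \(x\), since \(u_\ell (x) \to u (x)\) and \((\abs{Du_\ell (x)}_{\gF})_{\ell \in \N}\) is bounded (it converges to \(w(x)\)), the morphisms \(Du_\ell (x)\) eventually lie in a fixed compact subset of \(T^* M \otimes TN\), namely a closed ball bundle over a compact neighbourhood of \((x, u (x))\) in \(M \times N\); on such a compact set the continuous distance \(d\) induces the manifold topology and is therefore complete, so \(Du_\ell (x) \to \upsilon (x)\) for some \(\upsilon (x) \in T^*_x M \otimes T_{u (x)} N\), the identity \(\pi_{M \times N} (\upsilon (x)) = (x, u (x))\) and the linearity of \(\upsilon (x)\) being read off in a local trivialisation. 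This produces a measurable bundle morphism \(\upsilon : TM \to TN\) covering \(u\) with \(Du_\ell \to \upsilon\) almost everywhere, hence locally in measure.

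Finally I would conclude with the weak closure property. Applying proposition~\ref{prop_closure} to \((u_\ell)_{\ell \in \N}\) --- which converges to \(u\) locally in measure, satisfies \(\liminf_{\ell \to \infty} \int_M \abs{Du_\ell}_{\gF}^p < \infty\), and, if \(p = 1\), is bilocally uniformly integrable --- gives \(u \in \dot{W}^{1, p} (M, N)\); and since \((Du_\ell)_{\ell \in \N}\) converges to the bundle morphism \(\upsilon\) locally in measure, the last assertion of proposition~\ref{prop_closure} yields \(Du = \upsilon\), hence \(Du_\ell \to Du\) locally in measure. Since moreover \(Du_\ell \to Du\) almost everywhere and \(\abs{\cdot}_{\gF}\) is continuous, \(\abs{Du_\ell}_{\gF} \to \abs{Du}_{\gF}\) almost everywhere, so that \(w = \abs{Du}_{\gF}\) and therefore \(\abs{Du_\ell}_{\gF} \to \abs{Du}_{\gF}\) in \(L^p (M)\). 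Thus \((u_\ell)_{\ell \in \N}\) converges strongly to \(u\) in \(\dot{W}^{1, p}(M, N)\), and with it the original Cauchy sequence. I expect the main obstacle to be the pointwise construction of \(\upsilon\) in the second step: one must confine the \(d\)-Cauchy sequence \((Du_\ell (x))_{\ell \in \N}\) to a compact --- hence \(d\)-complete --- portion of \(T^* M \otimes TN\), and this is exactly where both hypotheses enter, the nonexpansiveness of \(\pi_{M \times N}\) to control the base points \((x, u_\ell (x))\) and the pointwise bound coming from the \(L^p\)-convergence to control the size of the morphisms; everything else reduces to completeness of \(L^p (M)\) and the already established weak closure property.
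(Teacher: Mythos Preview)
Your proposal is correct and follows essentially the same route as the paper: extract a subsequence that is \(d\)-Cauchy almost everywhere and for which \(\abs{Du_\ell}_{\gF}\) converges almost everywhere, use nonexpansiveness of \(\pi_{M\times N}\) together with completeness of \((\{x\}\times N,d)\) to obtain the pointwise limit \(u\), use completeness of \(d\) on compact subsets of \(T^*M\otimes TN\) to obtain the limiting morphism \(\upsilon\), and invoke the weak closure property (proposition~\ref{prop_closure}) to identify \(\upsilon=Du\). Your write-up is in fact slightly more explicit than the paper's in two places --- you spell out the compact confinement argument for \(Du_\ell(x)\), and you check the bilocal uniform integrability hypothesis needed in proposition~\ref{prop_closure} when \(p=1\) --- but the argument is the same.
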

\begin{proof}
Let \((u_\ell)_{\ell \in \N}\) be a Cauchy sequence for the distance \(\dot{\delta}_{1,p}\).
By the completeness of \(L^p (M)\), there exists a map \(w \in L^p (M)\) such that \((\abs{D u_\ell}_{\gF})_{\ell \in \N}\) converges to  \(w\) in \(L^p (M)\).
By the properties of the distance defined in \eqref{eqDistanceMeasure}, there is a subsequence \((Du_{\ell_k})_{k \in \N}\) which is Cauchy almost everywhere with respect to the distance \(d\) \citelist{\cite{bogachev}*{exercise 4.7.60} \cite{schwartz}*{théorème 5.8.31}} and \((\abs{D u_{\ell_k}}_{\gF})_{k \in \N}\) converges to  \(w\) almost everywhere on \(M\). 
By the nonexpansiveness of the fiber projection \(\pi_{M \times N}\), for almost every \(x \in M\), \(((x, u_{\ell_k} (x)))_{k \in \N}\) is a Cauchy sequence with respect to the distance \(d\). 
Since for almost every \(x \in M\), the space \((\{x\} \times N, d)\) is complete, the sequence \((u_{\ell_k} )_{k \in \N}\) converges almost everywhere to a measurable map \(u : M \to N\).
Since \(d\) is continuous, it is complete on every compact subset of \(T^*M \otimes TN\) \cite{munkres}*{theorem 45.1}. Therefore there exists a measurable bundle morphism \(\upsilon : TM \to TN\) such that \((Du_{\ell_k})_{k \in \N}\) converges almost everywhere to \(\upsilon\) and \(\abs{\upsilon}_{\gF} = w\).
By the weak closure property (proposition~\ref{prop_closure}), \(\upsilon = D u\). Therefore, \((Du_{\ell_k})_{k \in \N}\) converges almost everywhere to \(D u\) and thus with respect to \(\delta\). Since the sequence \((Du_{\ell})_{\ell \in \N}\) is Cauchy with respect to the distance \(\delta\), the whole sequence \((Du_{\ell})_{\ell \in \N}\) converges to \(Du\) with respect to the distance \(\delta\).
\end{proof}

Given an isometric embedding \(\iota : N \to \Tilde{N}\), we can also consider the distance 
\(\dot{\delta}_{1, p}^\iota (u, v) = \dot{\delta}_{1, p} (\iota \circ u, \iota \circ v)\). This distance gives the same convergence.

\begin{propo}
If \(\iota : N \to \Tilde{N}\) is an isometric embedding, then the sequence \((u_\ell)_{\ell \in \N}\) converges strongly to \(u \in W^{1, p} (M, N)\) in \( \dot{W}^{1, p}(M, N)\) if and only if 
the sequence \((\iota \circ u_\ell)_{\ell \in \N} \) converges strongly to \(\iota \circ u \in \dot{W}^{1,p}(M,\Tilde{N})\) in \(\dot{W}^{1,p}(M,\Tilde{N})\).
\end{propo}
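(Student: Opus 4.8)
The plan is to reduce the equivalence of the two strong convergences to the equivalence of the corresponding convergences locally in measure of the derivatives, the $L^p$ part being essentially automatic. Recall from proposition~\ref{equi_isometric} and its proof that for every colocally weakly differentiable map $v : M \to N$ one has $D(\iota \circ v) = D\iota \circ Dv$ and, $\iota$ being isometric, $\abs{Dv}_{\gF} = \abs{D(\iota \circ v)}_{g^*_M \otimes g_{\Tilde{N}}}$ almost everywhere in $M$, and that $v \in \Wp$ if and only if $\iota \circ v \in \dot{W}^{1,p}(M, \Tilde{N})$. In particular, $(\abs{Du_\ell}_{\gF})_{\ell \in \N}$ converges to $\abs{Du}_{\gF}$ in $L^p(M)$ if and only if $(\abs{D(\iota \circ u_\ell)}_{g^*_M \otimes g_{\Tilde{N}}})_{\ell \in \N}$ converges to $\abs{D(\iota \circ u)}_{g^*_M \otimes g_{\Tilde{N}}}$ in $L^p(M)$. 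It remains to prove that $(Du_\ell)_{\ell \in \N}$ converges to $Du$ locally in measure if and only if $(D(\iota \circ u_\ell))_{\ell \in \N}$ converges to $D(\iota \circ u)$ locally in measure. Here I would invoke that, as observed above, convergence locally in measure is metrizable, so that a sequence converges locally in measure to a given limit if and only if every subsequence has a further subsequence converging almost everywhere.

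For the direct implication, assume $(u_\ell)_{\ell \in \N}$ converges strongly to $u$ in $\Wp$. Given a subsequence of $(D(\iota \circ u_\ell))_{\ell \in \N}$, I would extract a further subsequence along which $Du_{\ell_k} \to Du$ almost everywhere on $M$; in particular $u_{\ell_k} \to u$ almost everywhere, the bundle projection being continuous. Since $\iota$ is $C^1$, the bundle map $D\iota : TN \to T\Tilde{N}$ is continuous, and by the chain rule $D(\iota \circ u_{\ell_k}) = D\iota \circ Du_{\ell_k} \to D\iota \circ Du = D(\iota \circ u)$ almost everywhere. By the subsequence criterion, $(D(\iota \circ u_\ell))_{\ell \in \N}$ converges to $D(\iota \circ u)$ locally in measure, and combined with the $L^p$ convergence of the norms this gives the strong convergence of $(\iota \circ u_\ell)_{\ell \in \N}$ in $\dot{W}^{1,p}(M, \Tilde{N})$.

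For the converse, assume $(\iota \circ u_\ell)_{\ell \in \N}$ converges strongly to $\iota \circ u$ in $\dot{W}^{1,p}(M, \Tilde{N})$, so that $(D(\iota \circ u_\ell))_{\ell \in \N}$ converges to $D(\iota \circ u)$ locally in measure. Given a subsequence of $(Du_\ell)_{\ell \in \N}$, I would extract a further subsequence along which $D(\iota \circ u_{\ell_k}) \to D(\iota \circ u)$ almost everywhere on $M$; then the base points also converge, $\iota(u_{\ell_k}(x)) \to \iota(u(x))$, hence $u_{\ell_k}(x) \to u(x)$ for almost every $x$, since $\iota$ is a homeomorphism onto its image. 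Fix such a point $x$ and work in local charts around $u(x) \in N$ and $\iota(u(x)) \in \Tilde{N}$ in which $\iota$ is represented by a $C^1$ map $g$ with $Dg$ injective at every point; the matrix representing $D(\iota \circ u_{\ell_k})(x) = D\iota(u_{\ell_k}(x)) \circ Du_{\ell_k}(x)$ is then $Dg(q_k)\,A_k$, where $q_k \to q$ represents $u_{\ell_k}(x) \to u(x)$ and $A_k$ represents $Du_{\ell_k}(x)$. As $Dg$ is continuous and $Dg(q)$ has full rank, for $k$ large $Dg(q_k)$ has full rank too and admits a left inverse depending continuously on $q_k$ --- for instance its Moore--Penrose pseudoinverse --- and applying it on the left recovers $A_k$, so $A_k$ converges to the matrix representing $Du(x)$. (Equivalently, one may compose with the $C^1$ retraction of a tubular neighbourhood of $\iota(N)$, as in the proof of proposition~\ref{propoEmbeddingWeakDerivative}.) Hence $Du_{\ell_k} \to Du$ almost everywhere, so $(Du_\ell)_{\ell \in \N}$ converges to $Du$ locally in measure, and together with the $L^p$ convergence of the norms this yields the strong convergence of $(u_\ell)_{\ell \in \N}$ in $\Wp$.

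The main obstacle is this last recovery step in the converse direction: the map induced fibrewise by $D\iota$ is injective but not surjective, so there is no global continuous inverse to fall back on; one exploits instead that $\iota$ is an immersion, working pointwise along an almost everywhere convergent subsequence with the local left inverse of $\iota$ (equivalently, the tubular neighbourhood retraction). Everything else reduces to the isometry identity for the norms and to the metrizability and subsequence characterisation of convergence locally in measure.
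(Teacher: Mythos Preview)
Your proposal is correct and follows essentially the same route as the paper: both split the equivalence into the $L^p$ convergence of the norms (immediate from the isometry identity $\abs{Du}_{\gF} = \abs{D(\iota \circ u)}_{g^*_M \otimes g_{\Tilde{N}}}$) and the equivalence of the two convergences locally in measure of the derivatives. The paper's own proof is much terser --- it simply asserts that ``since $\iota$ is an embedding'' the two measure convergences are equivalent --- whereas you spell out the mechanism via the subsequence characterisation, the continuity of $D\iota$ for the direct implication, and a local left inverse (equivalently the tubular-neighbourhood retraction of proposition~\ref{propoEmbeddingWeakDerivative}) for the converse; this is exactly the content that the paper leaves implicit.
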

\begin{proof}
Since \(\iota\) is an embedding, \((D (\iota \circ u_\ell))_{\ell \in \N}\) converges to \(D(\iota \circ u)\) in measure if and only if \((D u_\ell)_{\ell \in \N}\) converges to \(D u\) locally in measure.
As \(\iota\) is isometric, for every \(\ell \in \N\),
\[
  \abs{D (\iota \circ u_\ell)}_{g^*_M\otimes g_{\Tilde{N}}} - \abs{D (\iota \circ u)}_{g^*_M\otimes g_{\Tilde{N}}} = \abs{D u_\ell}_{\gF} - \abs{D u}_{\gF};
\]
the conclusion follows from the definition of convergence.
\end{proof}

The distance \(\dot{\delta}_{1, p}^\iota\) gives thus the same topology as \(\dot{\delta}_{1, p}\). However the completeness of \(\dot{W}^{1, p} (M, N)\) will then depend on the completeness of \(\{x\} \times \iota (N)\); a necessary condition is that \(\iota (N)\) should be closed.
When \(N\) is complete but not compact, the original Nash embedding theorem will give \(\iota(N)\) which is not closed \citelist{\cite{nash1954}\cite{nash1956}}; it is however always possible when \(N\) is complete to have a Nash embedding theorem with \(\iota (N)\) closed \cite{muller}.

\medskip

%COMPARE CHIRON DISTANCE
We would also like to compare these notion with the metric of Chiron \cite{chiron}*{\S 1.6}\footnote{As the modulus of the derivative has several definitions, we have in fact a family of distances out of which we take the one that uses our notion of modulus of the derivative. Instead of introducing the notion of Lebesgue spaces into metric spaces, we consider convergence in measure for maps from \(M\) to \(N\).} for \(u,v\in \dot{W}^{1,p}(M, N)\): 
\[ 
  \dot{\delta}_{1,p}^C (u,v) = \delta (u, v) + \Bigl(\int_{M} \bigabs{\abs{D u}_{\gF} - \abs{D v}_{\gF}}^p\Bigr)^\frac{1}{p}. 
\]
The Sobolev space is \emph{not complete} under this distance when \(N = \R\) \cite{chiron}*{lemma 2}.
In order to study the topological equivalence of these metrics we give a criterion of convergence in measure of the derivative.

\begin{propo}[Criterion for convergence in measure]
\label{propositionCriterionStrongConvergence}
Let \((u_\ell)_{\ell \in \N}\) be a sequence of colocally weakly differentiable maps from \(M\) to \(N\).
If the sequence \((u_\ell)_{\ell \in \N}\) converges locally in measure to a colocally weakly differentiable map \(u \colon M \to N\) and if the sequence \((\abs{D u_\ell}_{\gF})_{\ell \in \N}\) converges to \(\abs{Du}_{\gF}\) in \(L^1_\mathrm{loc} (M)\), then the sequence \((Du_\ell)_{\ell \in \N}\) converges to \(D u\) locally in measure.
\end{propo}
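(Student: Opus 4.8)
The plan is to work locally and reduce everything to a statement about weakly differentiable maps into Euclidean space, where the analogous criterion is known. The key observation is the following: convergence in measure of a sequence of bundle morphisms is a local notion, so it suffices to prove that $(Du_\ell)_{\ell\in\N}$ converges to $Du$ in measure on $u^{-1}(U)$ for each member $U$ of a countable cover of $N$ by (almost isometric) extended local charts.

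\medskip

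First I would fix $y\in N$ and $\varepsilon>0$ and take $U\subseteq N$, $\varphi\in C^1(N,\R^n)$, $\varphi^*\in C^1(\R^n,N)$ from lemma~\ref{lemme_iso}, together with a local chart $\psi:V\subseteq M\to\R^m$. Since $u_\ell\to u$ locally in measure and $U$ is open, after passing to the compact exhaustion one has that $\psi(V\cap u^{-1}(U))\setminus\psi(V\cap u_\ell^{-1}(U))$ has Lebesgue measure tending to $0$; thus, up to a set of small measure, all the maps $\varphi\circ u_\ell\circ\psi^{-1}$ are defined and take values near $\varphi(U)$. On this set, colocal weak differentiability and proposition~\ref{thm_weak} give $D(\varphi\circ u_\ell\circ\psi^{-1}) = D\varphi\circ Du_\ell\circ D\psi^{-1}$, and since $D\varphi$ is invertible on $U$ it is equivalent (on $u^{-1}(U)$) to prove that $(D(\varphi\circ u_\ell\circ\psi^{-1}))_{\ell\in\N}$ converges in measure to $D(\varphi\circ u\circ\psi^{-1})$ on the relevant subset of $\psi(V)$. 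So the problem is reduced to: a sequence of weakly differentiable maps $g_\ell:\Omega\to\R^n$ (here $g_\ell=\varphi\circ u_\ell\circ\psi^{-1}$) converging in measure to $g$, whose derivative norms $\abs{Dg_\ell}$ converge in $L^1_\mathrm{loc}$ to $\abs{Dg}$ --- and one wants $Dg_\ell\to Dg$ in measure. Here one should be careful that the norms $\abs{D u_\ell}_{\gF}$ convert into the Euclidean $\abs{Dg_\ell}$ only up to factors $(1\pm\varepsilon)$ coming from lemma~\ref{lemme_iso}, so the $L^1_\mathrm{loc}$-convergence hypothesis passes over with an $\varepsilon$-loss; letting $\varepsilon\to0$ at the end through a countable subcover handles this.

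\medskip

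For the Euclidean reduction itself, the natural argument is a uniform-convexity/Radon--Riesz type argument. From $\abs{Dg_\ell}\to\abs{Dg}$ in $L^1_\mathrm{loc}$ one extracts a subsequence along which $\abs{Dg_\ell}\to\abs{Dg}$ a.e.\ and is dominated on compacts; the sequence $(Dg_\ell)$ is then uniformly integrable on compact sets, hence (after a further subsequence) converges weakly in $L^1_\mathrm{loc}$ to some $h$, and by the weak closure property for weakly differentiable maps (the Euclidean case of proposition~\ref{propositionWeakClosure}, applied to $g_\ell$, which converges in measure to $g$) one identifies $h=Dg$. Now $Dg_\ell\rightharpoonup Dg$ weakly in $L^1_\mathrm{loc}$ while $\abs{Dg_\ell}\to\abs{Dg}$ strongly in $L^1_\mathrm{loc}$: on any compact $K$ where $\abs{Dg}>0$ a.e., strict convexity of the Euclidean norm forces $Dg_\ell\to Dg$ in $L^1(K)$, hence in measure; on the set where $\abs{Dg}=0$, strong convergence of $\abs{Dg_\ell}$ to $0$ already gives $Dg_\ell\to 0$ in measure directly. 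Splitting $\{\,\abs{Dg}=0\,\}$ and $\{\,\abs{Dg}>0\,\}$ and combining yields $Dg_\ell\to Dg$ in measure on $K$, and since the limit is independent of the subsequence chosen the whole original sequence converges in measure. Pulling this back through $\psi$, $\varphi$ and summing over the countable cover, then letting $\varepsilon\to 0$, gives the conclusion.

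\medskip

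The main obstacle, I expect, is the passage from strong convergence of the scalar quantities $\abs{Dg_\ell}$ plus weak convergence of $Dg_\ell$ to strong (hence in-measure) convergence of the vector-valued $Dg_\ell$ --- i.e.\ the use of strict convexity of the Euclidean norm to upgrade weak to strong convergence, taking proper care of the zero set of $\abs{Dg}$ and of the fact that we only have local (not global) integrability control. A clean way to organize it is to argue on a fixed compact $K$, use that $\abs{Dg_\ell-Dg}^2 = \abs{Dg_\ell}^2 - 2\langle Dg_\ell, Dg\rangle + \abs{Dg}^2$, note $\int_K \langle Dg_\ell,Dg\rangle\to\int_K\abs{Dg}^2$ by weak convergence (with $Dg\cdot\mathbf 1_K$ a bounded test field once we further truncate $Dg$, handling the truncation error by uniform integrability), and $\int_K\abs{Dg_\ell}^2 \to \int_K \abs{Dg}^2$ — but this last step is exactly where one needs $\abs{Dg_\ell}\to\abs{Dg}$ in $L^2_\mathrm{loc}$, not merely $L^1_\mathrm{loc}$. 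To avoid that gap it is cleaner to truncate: replace $Dg_\ell$ by $T_R(Dg_\ell)$ (radial truncation at level $R$), for which $\abs{T_R(Dg_\ell)}\to\abs{T_R(Dg)}$ in $L^2_\mathrm{loc}$ follows from $L^1_\mathrm{loc}$-convergence of $\abs{Dg_\ell}$ plus dominated convergence, run the convexity argument on the truncations to get $T_R(Dg_\ell)\to T_R(Dg)$ in measure, and then let $R\to\infty$ using that $\mathcal L^m(\{\abs{Dg_\ell}>R\}\cap K)$ is small uniformly in $\ell$ by uniform integrability. This truncation device is the technical heart; everything else is covering and bookkeeping.
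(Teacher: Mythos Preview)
Your overall strategy---localize via charts, obtain weak \(L^1\) convergence of \(D(\varphi\circ u_\ell)\) from the closure property, and then upgrade to strong convergence by strict convexity of the Euclidean norm---is exactly the paper's. The paper, however, executes the last step through the Balder--Visintin criterion (proposition~\ref{propo_baldervisitin}), and this choice is not incidental: two of your intermediate steps do not close as written.

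\medskip

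\textbf{The reduction to the Euclidean statement.} You assert that the hypothesis \(\abs{Du_\ell}_{\gF}\to\abs{Du}_{\gF}\) in \(L^1_\mathrm{loc}\) ``passes over with an \(\varepsilon\)-loss'' to \(\abs{D(\varphi\circ u_\ell)}\to\abs{D(\varphi\circ u)}\). But the ratio \(\abs{D\varphi(u_\ell(x))\circ Du_\ell(x)}/\abs{Du_\ell(x)}\), while lying in \([(1+\varepsilon)^{-1},1+\varepsilon]\), depends on the \emph{direction} of \(Du_\ell(x)\) relative to the principal axes of \(D\varphi(u_\ell(x))\)---precisely the information you do not yet control. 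Hence \(\abs{D(\varphi\circ u_\ell)(x)}\) need not converge at all; it may oscillate within an interval of width comparable to \(\varepsilon\abs{Du(x)}\). With that oscillation present, the weak limit \(D(\varphi\circ u)(x)\) lies strictly \emph{inside} any ball you can certify contains the tail, and no extreme-point or Radon--Riesz argument applies. ``Letting \(\varepsilon\to 0\) through a countable subcover'' does not repair this, because on each fixed chart with fixed \(\varepsilon>0\) you never obtain a convergence-in-measure statement to improve. The paper sidesteps the difficulty entirely: it uses the ordinary extended charts of lemma~\ref{lemme_diffeo} (no near-isometry needed) and records the \emph{exact} inclusion
\[
  D(\varphi\circ u_\ell)(x)\in D\varphi(u_\ell(x))\bigl(\Bar{B}_{\abs{Du_\ell(x)}_{\gF}}\bigr).
\]
Since \(u_\ell(x)\to u(x)\) and \(\abs{Du_\ell(x)}_{\gF}\to\abs{Du(x)}_{\gF}\) almost everywhere along a subsequence, these ellipsoids converge exactly to \(D\varphi(u(x))(\Bar{B}_{\abs{Du(x)}_{\gF}})\), of which \(D(\varphi\circ u)(x)\) is a genuine extreme point (invertibility of \(D\varphi(u(x))\) on \(u^{-1}(U)\) carries extreme points to extreme points). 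Balder--Visintin then gives strong \(L^1\) convergence on \(K\cap u^{-1}(U)\) directly.

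\medskip

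\textbf{The truncation device.} Even if one granted you exact \(L^1\) convergence of \(\abs{Dg_\ell}\), your hands-on argument via radial truncation has a gap: to run the quadratic expansion on \(T_R(Dg_\ell)\) you need \(T_R(Dg_\ell)\rightharpoonup T_R(Dg)\) weakly, and truncation being nonlinear this does not follow from \(Dg_\ell\rightharpoonup Dg\). (If you really did have exact norm convergence, a cleaner elementary route would be to test the weak \(L^1\) convergence against \(Dg/\abs{Dg}\in L^\infty\) on \(\{\abs{Dg}>0\}\); together with a.e.\ convergence of \(\abs{Dg_\ell}\) this yields a.e.\ convergence of \(Dg_\ell\) without any truncation.) But this repair is moot until the first issue is resolved; the paper's use of Balder--Visintin with the moving ellipsoids handles both at once.
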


As an immediate consequence we have the topological equivalence between \(\dot{\delta}_{1,p}\) and \(\dot{\delta}_{1,p}^C\).

\begin{propo}
\label{corChironEquiv}
Let \(p \in [1, \infty)\).
Let \((u_\ell)_{\ell \in \N}\) be a sequence of maps in \(\dot{W}^{1, p} (M, N)\) and \(u \in \dot{W}^{1, p} (M, N)\). The sequence \((u_\ell)_{\ell \in \N}\) converges strongly to \(u\) in \( \dot{W}^{1,p}(M,N) \) if and only if \(\lim_{\ell \to \infty} \dot{\delta}^C_{1, p} (u_\ell, u) = 0\).
\end{propo}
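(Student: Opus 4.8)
The plan is to unwind both distances into their two summands and match them term by term, using the criterion of Proposition~\ref{propositionCriterionStrongConvergence} for the one nontrivial implication. Recall that, by Definition~\ref{def_convergence_sobolev}, \((u_\ell)_{\ell \in \N}\) converges strongly to \(u\) in \(\Wp\) precisely when \((Du_\ell)_{\ell \in \N}\) converges to \(Du\) locally in measure \emph{and} \((\abs{Du_\ell}_{\gF})_{\ell \in \N}\) converges to \(\abs{Du}_{\gF}\) in \(L^p(M)\), while \(\dot{\delta}_{1,p}^C(u_\ell,u) \to 0\) means that \((u_\ell)_{\ell \in \N}\) converges to \(u\) locally in measure \emph{and} \(\int_M \bigabs{\abs{Du_\ell}_{\gF} - \abs{Du}_{\gF}}^p \to 0\), i.e.\ again that \((\abs{Du_\ell}_{\gF})_{\ell \in \N}\) converges to \(\abs{Du}_{\gF}\) in \(L^p(M)\). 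The two statements therefore share the \(L^p\) term, and everything reduces to comparing ``\((Du_\ell)_{\ell\in\N}\) converges to \(Du\) locally in measure'' with ``\((u_\ell)_{\ell\in\N}\) converges to \(u\) locally in measure''.

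First I would treat the implication from strong convergence to \(\dot{\delta}_{1,p}^C(u_\ell,u)\to 0\); here the only thing to check is that \((u_\ell)_{\ell \in \N}\) converges to \(u\) locally in measure. Viewing each \(Du_\ell\) as a map from \(M\) to \(T^*M \otimes TN\) as in the discussion preceding Definition~\ref{def_convergence_sobolev}, the bundle projection \(\pi_N : T^*M \otimes TN \to N\) is continuous and \(\pi_N \circ Du_\ell = u_\ell\). Since convergence locally in measure for maps into a metric space is preserved by post-composition with a continuous map --- which follows from the subsequence-of-a-subsequence argument already used in the proof of Proposition~\ref{propo_rellich}, passing through almost everywhere convergence --- the local convergence in measure of \((Du_\ell)_{\ell \in \N}\) to \(Du\) forces that of \((u_\ell)_{\ell \in \N}\) to \(u\), hence \(\delta(u_\ell,u) \to 0\).

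For the converse, assume \(\dot{\delta}_{1,p}^C(u_\ell,u) \to 0\). Then \((u_\ell)_{\ell \in \N}\) converges to \(u\) locally in measure and \((\abs{Du_\ell}_{\gF})_{\ell \in \N}\) converges to \(\abs{Du}_{\gF}\) in \(L^p(M)\). Since \(M\) is \(\sigma\)-compact and the Riemannian measure of any compact subset of \(M\) is finite, H\"older's inequality shows that this convergence also holds in \(L^1_{\mathrm{loc}}(M)\). As \(u\) and all the \(u_\ell\) are colocally weakly differentiable (being in \(\Wp\)), Proposition~\ref{propositionCriterionStrongConvergence} applies and gives that \((Du_\ell)_{\ell \in \N}\) converges to \(Du\) locally in measure. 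Together with the \(L^p(M)\) convergence of the norms of the derivatives, this is exactly strong convergence of \((u_\ell)_{\ell \in \N}\) to \(u\) in \(\Wp\).

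I do not anticipate a real obstacle: the statement is essentially a repackaging of Proposition~\ref{propositionCriterionStrongConvergence}. The only points needing a modicum of care are the stability of convergence locally in measure under composition with the continuous fibre projection (used in the first implication) and the elementary observation that \(L^p(M)\) convergence localizes to \(L^1_{\mathrm{loc}}(M)\) convergence (needed to feed Proposition~\ref{propositionCriterionStrongConvergence}).
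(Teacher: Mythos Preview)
Your proposal is correct and matches the paper's approach: the paper presents the statement precisely as ``an immediate consequence'' of Proposition~\ref{propositionCriterionStrongConvergence}, and your write-up is exactly the unpacking of that consequence, handling the forward direction via the bundle projection and the reverse direction via Proposition~\ref{propositionCriterionStrongConvergence} together with the \(L^p \hookrightarrow L^1_{\mathrm{loc}}\) observation.
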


This proposition is due to Chiron when \(N = \R\) and \(p > 1\). The proof of proposition~\ref{propositionCriterionStrongConvergence} relies on the Balder-Visintin criterion of strong convergence.

\begin{propo}[Balder--Visintin criterion of strong convergence \citelist{\cite{balder}*{theorem~1}\cite{visintin}*{corollary 2}}]
\label{propo_baldervisitin}
Let \((f_\ell)_{\ell \in \N}\) be a sequence in \(L^1 (U, \R^k)\). 
If the sequence \((f_\ell)_{\ell \in \N}\) converges weakly to \(f \in L^1 (U,\R^k)\) and for almost every \(x \in U\), the point \(f (x)\) is an extreme point of
\[
 \bigcap_{j \in \N} \overline{\operatorname{co} \{f_i (x) \colon i \ge j \}},
\]
then the sequence \((f_\ell)_{\ell \in \N}\) converges to \( f\) in \( L^1(U,\R^k)\).
\end{propo}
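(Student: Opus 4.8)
The plan is to combine the theory of Young measures with a subsequence argument. I would first reduce the statement: it suffices to show that every subsequence of \((f_\ell)_{\ell\in\N}\) has a further subsequence converging to \(f\) in \(L^1(U,\R^k)\), and since a weakly convergent sequence in \(L^1(U,\R^k)\) is relatively weakly compact, hence bounded, uniformly integrable and tight by the Dunford--Pettis theorem, Vitali's convergence theorem shows that it is in fact enough for such a further subsequence to converge to \(f\) in measure on every set of finite measure.

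Fix a subsequence; it still converges weakly to \(f\). Passing to a further subsequence I may assume that it generates a Young measure \((\nu_x)_{x\in U}\), i.e.\ a measurable family of probability measures on \(\R^k\). Two classical facts are then used. By the uniform integrability of the sequence together with the ``no loss of mass'' form of the fundamental theorem on Young measures, for almost every \(x\in U\) the measure \(\nu_x\) has finite first moment and its barycenter \(\int_{\R^k} y\,d\nu_x(y)\) equals \(f(x)\) --- this is where weak \(L^1\)-convergence of \((f_\ell)_{\ell\in\N}\) to \(f\) enters. By the support-localization property, \(\supp\nu_x\subseteq\bigcap_{j\in\N}\overline{\{f_i(x):i\ge j\}}\) for almost every \(x\); taking closed convex hulls, \(\supp\nu_x\subseteq K(x):=\bigcap_{j\in\N}\overline{\operatorname{co}\{f_i(x):i\ge j\}}\), a closed convex set.

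The decisive --- and elementary --- step is the following: if a probability measure \(\nu\) with finite first moment on a closed convex set \(K\subseteq\R^k\) has its barycenter \(b\) as an \emph{extreme} point of \(K\), then \(\nu=\delta_b\). Indeed, were \(\nu\) not a Dirac mass there would be a Borel set \(A\) with \(0<\nu(A)<1\); the conditional barycenters \(b_1=\nu(A)^{-1}\int_A y\,d\nu(y)\) and \(b_2=\nu(A^c)^{-1}\int_{A^c} y\,d\nu(y)\) both lie in \(K\) (the barycenter of a probability measure with finite first moment on a closed convex set lies in that set, by Hahn--Banach separation), and \(b=\nu(A)\,b_1+\nu(A^c)\,b_2\), so extremality forces \(b_1=b_2=b\); since \(A\) is arbitrary this yields \(\int_A y\,d\nu(y)=\nu(A)\,b\) for every Borel \(A\), hence \(\nu=\delta_b\). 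Applying this with \(\nu=\nu_x\), \(K=K(x)\) and \(b=f(x)\), which is an extreme point of \(K(x)\) by hypothesis, we obtain \(\nu_x=\delta_{f(x)}\) for almost every \(x\in U\) --- exactly the statement that the chosen subsequence converges to \(f\) in measure on sets of finite measure. By the reduction of the first paragraph, \((f_\ell)_{\ell\in\N}\) converges to \(f\) in \(L^1(U,\R^k)\).

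Conceptually the argument is short once Young measures are available; the point that requires care, and is the main obstacle, is to invoke the correct uniformly-integrable version of the fundamental theorem on Young measures, so that no mass escapes to infinity and the barycenter of \(\nu_x\) is genuinely identified with the weak \(L^1\)-limit \(f(x)\). A purely measure-theoretic proof avoiding Young measures, closer to Visintin's original argument, is also possible but more technical.
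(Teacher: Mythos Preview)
The paper does not prove this proposition: it is quoted from the literature with references to Balder and Visintin, and then invoked as a black box in the proof of proposition~\ref{propositionCriterionStrongConvergence}. There is therefore no ``paper's own proof'' to compare against.

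Your Young-measure argument is correct and is essentially Balder's original approach. The reduction via Dunford--Pettis and Vitali is standard; uniform integrability guarantees that no mass escapes, so the barycenter of \(\nu_x\) is indeed \(f(x)\) almost everywhere; and the extreme-point lemma (a probability measure on a closed convex set whose barycenter is extreme is a Dirac mass) is exactly the mechanism that collapses \(\nu_x\) to \(\delta_{f(x)}\). The one step you pass over quickly is the support localization \(\operatorname{supp}\nu_x\subseteq\bigcap_j\overline{\{f_i(x):i\ge j\}}\) for almost every \(x\); this is true but not entirely immediate, and is proved by testing the Young-measure convergence against functions of the form \(\chi_E(x)\varphi(y)\) with \(E\) measurable and \(\varphi\in C_c(\R^k)\). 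Visintin's earlier argument, which you mention at the end, avoids Young measures and works instead by a direct convexity argument on the weak limit.
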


For a set \(A \subseteq \R^k\), the set \(\operatorname{co} A\) denotes the \emph{convex hull} of \(A\), that is, the set of convex combinations of elements of \(A\). A point \(c\) is an \emph{extreme point} of a convex set \(C\) if \(C \setminus \{c\}\) is convex.

\begin{proof}[Proof of proposition~\ref{propositionCriterionStrongConvergence}]
Let \(U\subseteq N \) and \(\varphi \in C^1 (N,\R^n)\) be the extended local chart given by lemma~\ref{lemme_diffeo} and \(K \subseteq M\) be compact. 
In view of proposition~\ref{propositionWeakClosure}, and the weak compactness criterion in \(L^1_{\mathrm{loc}} (K)\), the sequence \((D (\varphi \circ u_\ell))_{\ell \in \N}\) converges weakly to \(D (\varphi \circ u)\) in \(L^1 (K)\). 

By taking a subsequence, we can assume that the sequences \((u_\ell)_{\ell \in \N}\) and \((\abs{D u_\ell})_{\ell \in \N}\) converge almost everywhere in \(M\).
In order apply the Balder--Visintin criterion of strong convergence, we note that for every \(x \in M\),
\[
 D (\varphi \circ u_\ell) (x) \in D \varphi (u_\ell (x)) (\Bar{B}_{\abs{D u_\ell (x)}_{\gF}})
\]
and so
\[ 
\overline{\operatorname{co} \{D(\varphi \circ u_k) (x) \colon k \ge \ell \}} \subseteq \overline{\operatorname{co} \{ D\varphi(u_k(x))(\Bar{B}_{\abs{D u_k (x)}_{\gF}}) \colon k \ge \ell \}}.
\]
Hence, since for almost every \(x \in K\), \((D \varphi (u_\ell (x)))_{\ell \in \N}\) converges to \(D \varphi (u (x))\) and \( (\abs{Du_\ell(x)}_{\gF})_{\ell \in \N} \) converges to \( \abs{Du(x)}_{\gF} \), we have 
\[
 \bigcap_{\ell \in \N} \overline{\operatorname{co} \{D(\varphi \circ u_k) (x) \colon k \ge \ell \} } \subseteq D \varphi (u (x))(\Bar{B}_{\abs{D u (x)}_{\gF}}).
\]
We finally observe that for every \(x \in u^{-1} (U)\), \( \abs{Du(x)}_{\gF}\) is an extremal point of \(\Bar{B}_{\abs{D u (x)}_{\gF}}\) and \(D \varphi (u (x))\) is invertible, therefore \(D(\varphi \circ u) (x)\) is an extremal point of \(D \varphi (u (x))(\Bar{B}_{\abs{D u (x)}_{\gF}})\) and hence of \(\bigcap_{\ell \in \N} \overline{\operatorname{co} \{D(\varphi \circ u_k) (x) \colon k \ge \ell \} }\). Hence by the Balder--Visintin criterion of strong convergence (proposition~\ref{propo_baldervisitin}), the sequence \((D (\varphi \circ u_\ell))_{\ell \in \N}\) converges to \(D (\varphi \circ u)\) in \(L^1 (K \cap u^{-1} (U))\) and thus in measure on \(K \cap u^{-1} (U)\). By covering \(M\) and \(N\) by countably many such compact sets \(K\) and extended local charts \((\varphi, U)\), we obtain the conclusion.
\end{proof}

The reader will observe that our argument relies on the structure of the norm that endows \(T^*M \otimes TN\). More precisely our proof requires the norm on \(T^*M \otimes TN\) to be uniformly convex; this is not the case when \(\min (\dim (M),\dim (N))\ge 2\) for the operator norm.

%COROLLAIRE EQUIVALENT WITH p>1
\begin{propo}
\label{propo_chiron2}
Let \(p \in (1, \infty)\).
Let \((u_\ell)_{\ell \in \N}\) be a sequence of maps in \(\dot{W}^{1, p} (M, N)\) and \(u \in \dot{W}^{1, p} (M, N)\). If \( (u_\ell)_{\ell \in \N}\) converges to \(u\) locally in measure and 
\[
 \lim_{\ell \to \infty} \int_{M} \abs{Du_\ell}_{\gF}^p
 = \int_{M} \abs{Du}_{\gF}^p,   
\]
then \( (\abs{Du_\ell}_{\gF})_{\ell \in \N} \) converges to \(\abs{Du}_{\gF}\) in \( L^p(M)\).
\end{propo}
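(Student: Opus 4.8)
The plan is to reduce the statement to a classical fact about uniformly convex spaces: in \(L^p(M)\) with \(p \in (1,\infty)\), a sequence that converges weakly and whose norms converge to the norm of the weak limit converges strongly \citelist{\cite{brezis}*{exercise 4.17.3}\cite{willem_en}*{theorem 4.2.6}\cite{bogachev}*{proposition 4.7.30}}. Since the hypothesis already provides the convergence of norms \(\int_M \abs{Du_\ell}_{\gF}^p \to \int_M \abs{Du}_{\gF}^p\), it suffices to prove that \((\abs{Du_\ell}_{\gF})_{\ell \in \N}\) converges to \(\abs{Du}_{\gF}\) weakly in \(L^p(M)\). Because both the hypotheses and the desired conclusion are stable under passing to subsequences, I would argue by the subsequence principle: it is enough to show that every subsequence of \((\abs{Du_\ell}_{\gF})_{\ell \in \N}\) admits a further subsequence converging to \(\abs{Du}_{\gF}\) in \(L^p(M)\), and then the whole sequence converges.

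Fix such a subsequence, which I relabel as the whole sequence. By the weak closure property (proposition~\ref{prop_closure}) one already has \(u \in \Wp\). Since \(\bigl(\int_M \abs{Du_\ell}_{\gF}^p\bigr)_{\ell \in \N}\) converges it is bounded, so by reflexivity of \(L^p(M)\) (here \(p > 1\)) a further subsequence satisfies \(\abs{Du_\ell}_{\gF} \rightharpoonup w\) weakly in \(L^p(M)\) for some nonnegative \(w \in L^p(M)\). The claim is that \(w = \abs{Du}_{\gF}\) almost everywhere. On the one hand, proceeding exactly as in the proof of proposition~\ref{prop_closure} — using \(\abs{D(f \circ u_\ell)}_{\gFk} \le \abs{f}_{\Lip}\abs{Du_\ell}_{\gF}\) for \(f \in C^1_c(N, \R^k)\) with \(k = \min(m,n)\), passing to the limit in \(\int_M \psh{D(f\circ u_\ell)}{v}\) by proposition~\ref{propositionWeakClosure}, and invoking the characterization of the derivative norm of proposition~\ref{thm_equi} — one gets \(\abs{Du}_{\gF} \le w\) almost everywhere in \(M\). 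On the other hand, weak lower semicontinuity of the \(L^p\) norm together with the norm convergence hypothesis gives \(\int_M w^p \le \liminf_{\ell \to \infty} \int_M \abs{Du_\ell}_{\gF}^p = \int_M \abs{Du}_{\gF}^p\). Since \(w \ge \abs{Du}_{\gF} \ge 0\) almost everywhere and both functions lie in \(L^p(M)\), these two facts force \(w = \abs{Du}_{\gF}\) almost everywhere.

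With \(w = \abs{Du}_{\gF}\), along this last subsequence we have simultaneously \(\abs{Du_\ell}_{\gF} \rightharpoonup \abs{Du}_{\gF}\) weakly in \(L^p(M)\) and \(\norm{\abs{Du_\ell}_{\gF}}_{L^p(M)} \to \norm{\abs{Du}_{\gF}}_{L^p(M)}\); by uniform convexity of \(L^p(M)\) for \(p \in (1,\infty)\) the convergence is strong in \(L^p(M)\), which closes the subsequence argument and proves the proposition. The substantive point — the only place where anything beyond soft functional analysis enters — is the identification \(w = \abs{Du}_{\gF}\), which rests on the weak closure machinery of proposition~\ref{prop_closure} and on the characterization of \(\abs{Du}_{\gF}\) in proposition~\ref{thm_equi}; the assumption \(p > 1\) is essential both for the weak compactness step and for the final uniform convexity upgrade, consistently with the failure of the statement for \(p = 1\).
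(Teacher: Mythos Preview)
Your proof is correct and follows essentially the same route as the paper's own argument: extract a weakly convergent subsequence of \((\abs{Du_\ell}_{\gF})_{\ell \in \N}\) in \(L^p(M)\), identify the weak limit \(w\) with \(\abs{Du}_{\gF}\) via proposition~\ref{thm_equi} and the lower-semicontinuity/norm-convergence squeeze, and then upgrade to strong convergence by uniform convexity. Your explicit invocation of the subsequence principle makes transparent a step the paper leaves implicit, and your reference to proposition~\ref{prop_closure} to justify \(u \in \Wp\) is harmless but redundant since that is already part of the hypothesis.
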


In particular the two metrics introduced by Chiron have the same convergent sequences \cite{chiron}*{lemma 2} for \(p > 1\).
By proposition~\ref{corChironEquiv}, this notion of convergence is also equivalent with convergence in \( \dot{W}^{1,p}(M,N)\). When \( p = 1 \), the equivalence does not hold already in the Euclidean case \cite{chiron}*{lemma 2}. The proof relies on the proposition~\ref{thm_equi}. 

\begin{proof}[Proof of proposition~\ref{propo_chiron2}]
Since \( (\abs{Du_\ell}_{\gF})_{\ell \in \N}\) is bounded in \( L^p(M) \) and \(p \in (1, \infty)\), by taking a subsequence, we can assume that \((\abs{Du_\ell}_{\gF})_{\ell \in \N}\) converges weakly to some \( w\) in \( L^p(M)\). 
Since \( (u_\ell)_{\ell \in \N}\) converges to \(u\) locally in measure, for every \( f \in C^1_c(M,\R^k)\), \( \abs{D(f\circ u)}_{\gFk} \le \abs{f}_{\Lip} \, w \) almost everywhere in \( M\). Hence, by proposition~\ref{thm_equi}, \( \abs{Du}_{\gF} \le w \) almost everywhere in \(M\). On the other hand, by lower semicontinuity  of the norm under weak convergence \citelist{\cite{brezis}*{proposition 3.5} \cite{lieb_loss}*{theorem 2.11}\cite{willem_en}*{theorem 5.4.6}} and by our assumption,
\[
  \int_{M} w^p \le \liminf_{\ell \to \infty} \int_{M} \abs{Du_\ell}_{\gF}^p
  = \int_{M} \abs{D u}_{\gF}^p
\]
and so \(w = \abs{Du}_{\gF}\) almost everywhere in \(M\). 
The sequence \((\abs{Du_\ell}_{\gF})_{\ell \in \N}\) converges thus weakly to \(\abs{D u}_{\gF}\) in \(L^p (M)\). Since \(p \in (1, \infty)\), we conclude that \((\abs{Du_\ell}_{\gF})_{\ell \in \N}\) converges to \(\abs{D u}_{\gF}\) in \(L^p (M)\) \citelist{\cite{bogachev}*{corollary 4.7.16}\cite{willem_en}*{exercise 5.3}\cite{brezis}*{exercise 4.19}}.
\end{proof}

We end this section by studying the continuity of Sobolev-type embeddings: Sobolev spaces of maps between manifolds essential inherit continuous Sobolev embeddings of Sobolev spaces of scalar functions.

\begin{propo}[Continuity of Sobolev type embeddings]
If \(u \in \dot{W}^{1, p} (M, N)\) and if \(\dot{W}^{1, p} (M) \cap L^q (M)\) is embedded in \(L^r (M)\), then for every \(\varepsilon > 0\), there exists \(\delta > 0\) such that if \(v \in \dot{W}^{1, p} (M; N)\) and \(\dot{\delta}_{1, p} (u, v) + \norm{d  (u, v)}_{L^q(M)} \le \delta\), then \(\norm{d (u, v)}_{L^r(M)} \le \varepsilon\).
\end{propo}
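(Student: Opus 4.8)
The plan is to recast the $\varepsilon$--$\delta$ statement in sequential form and then to prove that $d(u,v_\ell)\to 0$ in $\dot W^{1,p}(M)\cap L^q(M)$ whenever $\dot{\delta}_{1,p}(u,v_\ell)\to 0$ and $\norm{d(u,v_\ell)}_{L^q(M)}\to 0$; the assumed continuity of the embedding $\dot W^{1,p}(M)\cap L^q(M)\hookrightarrow L^r(M)$ then closes the argument. This reduction is legitimate because $\dot{\delta}_{1,p}(u,v)+\norm{d(u,v)}_{L^q(M)}$ and $\norm{d(u,v)}_{L^r(M)}$ are nonnegative reals depending on $v$ and $L^r(M)$ is a metric space; one may therefore argue sequentially and pass freely to subsequences. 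By definition~\ref{def_convergence_sobolev} and the definition of $\dot{\delta}_{1,p}$, the hypothesis $\dot{\delta}_{1,p}(u,v_\ell)\to 0$ unpacks into: $(Dv_\ell)_{\ell\in\N}$ converges to $Du$ locally in measure (hence $v_\ell\to u$ locally in measure), and $\abs{Dv_\ell}_{\gF}\to\abs{Du}_{\gF}$ in $L^p(M)$ (hence $\abs{Dv_\ell}_{\gF}^p\to\abs{Du}_{\gF}^p$ in $L^1(M)$).

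First I would set $w_\ell:=d(u,v_\ell)$ and record that $w_\ell\in\dot W^{1,p}(M)$. Indeed $(u,v_\ell)\colon M\to N\times N$ is colocally weakly differentiable, as a product of colocally weakly differentiable maps, with colocal weak derivative $(Du,Dv_\ell)$ and $\abs{D(u,v_\ell)}_{g^*_M\otimes(g_N\oplus g_N)}=(\abs{Du}_{\gF}^2+\abs{Dv_\ell}_{\gF}^2)^{1/2}\in L^p(M)$; equipping $N\times N$ with the product Riemannian metric makes its geodesic distance equal to $\sqrt{d_N^2+d_N^2}$, so $d_N\colon N\times N\to\R$ is Lipschitz with $\abs{d_N}_{\Lip}\le\sqrt2$. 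The chain rule in Sobolev spaces (proposition~\ref{propositionCompositionLipschitz}) then gives $w_\ell\in\dot W^{1,p}(M,\R)=\dot W^{1,p}(M)$ with $\abs{Dw_\ell}_{g^*_M\otimes g_1}\le\sqrt2\,(\abs{Du}_{\gF}^2+\abs{Dv_\ell}_{\gF}^2)^{1/2}$ almost everywhere in $M$. Since $\norm{w_\ell}_{L^q(M)}\to 0$ is assumed, it remains to prove $\norm{Dw_\ell}_{L^p(M)}\to 0$.

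For this I would run a Vitali-type argument for $\abs{Dw_\ell}^p$, based on two facts. The \emph{first} is that $Dw_\ell\to 0$ locally in measure: working in a local chart and using the first variation formula for arclength, one checks that for almost every $x\in M$,
\(\abs{Dw_\ell(x)}_{g^*_M\otimes g_1}\le\abs{Dv_\ell(x)-\tau^\ell_x\circ Du(x)}_{\gF}\),
where $\tau^\ell_x\colon T_{u(x)}N\to T_{v_\ell(x)}N$ is parallel transport along a minimizing geodesic from $u(x)$ to $v_\ell(x)$ (the left-hand side being $0$ where $u(x)=v_\ell(x)$, by proposition~\ref{propo_equ}); since $v_\ell\to u$ and $Dv_\ell\to Du$ locally in measure and $\tau^\ell_x\to\mathrm{id}$ as $v_\ell(x)\to u(x)$, the right-hand side, and hence $Dw_\ell$, tends to $0$ locally in measure. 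The \emph{second} fact is that $(\abs{Dw_\ell}_{g^*_M\otimes g_1}^p)_{\ell\in\N}$ is uniformly integrable and uniformly tight on $M$: by the pointwise bound above it is dominated by $C_p(\abs{Du}_{\gF}^p+\abs{Dv_\ell}_{\gF}^p)$, and the convergence $\abs{Dv_\ell}_{\gF}^p\to\abs{Du}_{\gF}^p$ in $L^1(M)$ makes this dominating sequence uniformly integrable and uniformly tight. The Vitali convergence theorem then yields $\abs{Dw_\ell}^p\to 0$ in $L^1(M)$, that is, $\norm{Dw_\ell}_{L^p(M)}\to 0$.

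Combining the last two paragraphs, $w_\ell\to 0$ in $\dot W^{1,p}(M)\cap L^q(M)$, so continuity of the embedding into $L^r(M)$ gives $\norm{d(u,v_\ell)}_{L^r(M)}=\norm{w_\ell}_{L^r(M)}\to 0$, which is the claim. I expect the main obstacle to be the first of the two facts above: near the diagonal of $N\times N$ the distance $d_N$ is not differentiable, and a crude Lipschitz estimate on $Dw_\ell$ (of size $\sim\sqrt2\,\abs{Du}_{\gF}$) does \emph{not} tend to zero; the role of the first variation formula is precisely to show that this non-vanishing contribution cancels against the synchronized motion of $u$ and $v_\ell$, leaving only the difference $Dv_\ell-\tau^\ell\circ Du$, which does vanish in the limit. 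Verifying this bound carefully (including the a.e.\ behaviour on the cut locus and the local uniformity of $\tau^\ell_x\to\mathrm{id}$, handled by passing to subsequences and covering $M$ and $N$ countably) is the one genuinely technical point.
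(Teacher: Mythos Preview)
Your approach is correct and coincides with the paper's. The paper reduces the statement to proposition~\ref{propo_continuity_distance}, which asserts that strong convergence $v_\ell \to u$ in $\dot W^{1,p}(M,N)$ forces $d(u,v_\ell)\to 0$ strongly in $\dot W^{1,p}(M)$; that proposition is in turn proved from the observation that for the geodesic distance one has $(Dd)(\xi,\zeta)\to 0$ as $\zeta\to\xi$ with $\pi_N(\zeta)\ne\pi_N(\xi)$, together with the pointwise domination $\abs{D(d(u,v_\ell))}\le \abs{Du}_{\gF}+\abs{Dv_\ell}_{\gF}$. Your first-variation bound $\abs{Dw_\ell(x)}\le \abs{Dv_\ell(x)-\tau^\ell_x\circ Du(x)}_{\gF}$ is precisely an explicit quantitative form of the paper's limit statement on $Dd$, and your Vitali argument is the rigorous version of the paper's appeal to ``Lebesgue's dominated convergence theorem'' with the varying majorant $\abs{Du}_{\gF}+\abs{Dv_\ell}_{\gF}$ (which requires either the generalized DCT or Vitali, since the dominating function is not fixed). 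In short, you have reconstructed the paper's proof with slightly more detail and more care about the convergence step.
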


Since we are in a nonlinear setting, the continuity is not a consequence of the boundedness of the embedding operator; it will follow instead of the continuity in Sobolev spaces of the distance function.

\begin{propo}\label{propo_continuity_distance}
Let \(p \in [1,\infty) \).
If the sequence \((u_\ell)_{\ell \in \N}\) in \(\dot{W}^{1, p} (M, N)\) converges strongly to \(u\in \dot{W}^{1, p} (M, N)\) in \(\dot{W}^{1, p} (M, N)\), then the sequence \((d(u_\ell, u))_{\ell \in \N}\) converges strongly to \(0\) in \(\dot{W}^{1, p} (M)\).
\end{propo}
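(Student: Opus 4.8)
The goal is to show that if $(u_\ell)$ converges strongly to $u$ in $\dot{W}^{1,p}(M,N)$, then the scalar functions $x \mapsto d(u_\ell(x), u(x))$ converge strongly to $0$ in $\dot{W}^{1,p}(M)$, that is, these functions are colocally weakly differentiable, have $L^p$-derivative, and their derivatives converge to $0$ in the $\dot{W}^{1,p}$ sense (locally in measure together with convergence of $L^p$-norms of the gradients, by the characterization following definition~\ref{def_convergence_sobolev}). My first step is to observe that $d(u_\ell, u) = d_N \circ (u_\ell, u)$, where $(u_\ell, u) \colon M \to N \times N$ and $d_N \colon N \times N \to \R$ is $\sqrt{2}$-Lipschitz (or $1$-Lipschitz, depending on the product metric normalization) with respect to the product Riemannian distance. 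Since $u_\ell, u \in \dot{W}^{1,p}(M,N)$, the product map $(u_\ell, u)$ lies in $\dot{W}^{1,p}(M, N \times N)$ by the product-of-manifolds proposition together with definition~\ref{hom_sobolev}, and then by the chain rule in Sobolev spaces (proposition~\ref{propositionCompositionLipschitz}) applied to the Lipschitz map $d_N$ we get $d(u_\ell, u) \in \dot{W}^{1,p}(M)$ with the pointwise bound
\[
  \abs{D d(u_\ell, u)}_{g_M^*} \le \abs{d_N}_{\Lip}\,\abs{D(u_\ell, u)}_{g_M^* \otimes g_{N \times N}} \le C\bigl(\abs{D u_\ell}_{\gF}^2 + \abs{Du}_{\gF}^2\bigr)^{1/2}
\]
almost everywhere in $M$. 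This already gives membership in $\dot{W}^{1,p}(M)$.

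**Convergence locally in measure of the derivatives.** Next I would show that $D d(u_\ell, u)$ converges to $0$ locally in measure. The natural route is: strong convergence in $\dot{W}^{1,p}(M,N)$ gives $Du_\ell \to Du$ locally in measure, hence $(u_\ell, u) \to (u, u)$ locally in measure and $D(u_\ell, u) \to D(u,u) = (Du, Du)$ locally in measure (using the product structure and the closure/product propositions, or simply working in charts where convergence in measure of the component derivatives is preserved under the continuous bundle operations). Then the bundle morphism $D d(u_\ell, u) = D d_N \circ D(u_\ell, u)$, which in charts is a continuous function of $(u_\ell, u)$ and $D(u_\ell, u)$ applied through the (a.e.\ defined, since $d_N$ is Lipschitz hence differentiable a.e., but we must be careful here) differential of $d_N$. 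The delicate point: $d_N$ is only Lipschitz, not $C^1$, so $D d_N$ is not continuous, and one cannot simply compose convergence in measure. However, on the diagonal $\{(y,y) : y \in N\}$ the distance function $d_N$ attains its minimum $0$, so $D d(u_\ell,u)(x) \to 0$ wherever $u_\ell(x) \to u(x)$ and the pointwise bound above holds — more precisely, I would use that $\abs{D d(u_\ell,u)}_{g_M^*} \le \abs{d_N(\cdot, u)}_{\Lip}\,\abs{Du_\ell}_{\gF} + (\text{similar with } u)$ and that the local Lipschitz constant of $y \mapsto d_N(y, u(x))$ near $y = u(x)$ need not be small; instead I exploit that the set where $u_\ell$ and $u$ are far apart has measure $\to 0$, and on the complement I bound $\abs{D d(u_\ell,u)}$ by $C(\abs{Du_\ell - Du}_{\gF} + \text{remainder})$ using a first-order expansion of $d_N$ at the diagonal. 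This is the main obstacle.

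**Handling the non-smoothness of $d_N$ near the diagonal.** To make the previous step rigorous, my plan is to work in almost-isometric extended local charts (lemma~\ref{lemme_iso}): around each $y \in N$ choose $\varphi$ with $\abs{D\varphi}_{\Lin}, \abs{D\varphi^*}_{\Lin} \le 1 + \varepsilon$; then on $u^{-1}(U)$, for $\ell$ large so that $u_\ell(x) \in U$ as well (which holds off a set of small measure, by convergence in measure), we have $d_N(u_\ell(x), u(x))$ comparable to $\abs{\varphi(u_\ell(x)) - \varphi(u(x))}$ and, crucially, $D d(u_\ell, u)(x)$ controlled in terms of $D(\varphi \circ u_\ell)(x) - D(\varphi \circ u)(x)$ plus a term vanishing with $\abs{\varphi(u_\ell) - \varphi(u)}$. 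Since $\varphi \circ u_\ell, \varphi \circ u \in W^{1,1}_{\loc}$ with $D(\varphi \circ u_\ell) \to D(\varphi \circ u)$ in measure (from strong convergence) and $\varphi \circ u_\ell \to \varphi \circ u$ in measure, this gives $D d(u_\ell, u) \to 0$ locally in measure. Finally, for the $L^p$-norm convergence $\int_M \abs{D d(u_\ell, u)}_{g_M^*}^p \to 0$: we have the pointwise dominating bound by $C(\abs{Du_\ell}_{\gF} + \abs{Du}_{\gF})$, whose $L^p$-norms are bounded (indeed convergent, by strong convergence), so the family $\abs{D d(u_\ell,u)}^p$ is bounded in $L^1$; uniform integrability follows from uniform integrability of $\abs{Du_\ell}_{\gF}^p$ (a consequence of $L^p$-convergence of $\abs{Du_\ell}_{\gF}$, via the Euclidean counterpart cited after definition~\ref{def_convergence_sobolev}); together with convergence to $0$ in measure, Vitali's theorem yields $\abs{D d(u_\ell,u)}_{g_M^*} \to 0$ in $L^p(M)$, and hence the sequence converges strongly to $0$ in $\dot{W}^{1,p}(M)$. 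The one genuinely delicate estimate is the first-order control of $D d(u_\ell, u)$ near the diagonal; everything else is assembling the already-proven closure, chain-rule, and product properties with standard uniform-integrability arguments.
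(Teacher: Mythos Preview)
Your overall architecture matches the paper's: membership of \(d(u_\ell,u)\) in \(\dot W^{1,p}(M)\) via the Lipschitz chain rule, then convergence of \(D\,d(u_\ell,u)\) to \(0\) locally in measure, then upgrade to \(L^p\) using the domination \(\lvert D\,d(u_\ell,u)\rvert \le \lvert Du_\ell\rvert_{\gF}+\lvert Du\rvert_{\gF}\) together with \(\lvert Du_\ell\rvert_{\gF}\to\lvert Du\rvert_{\gF}\) in \(L^p\). Your Vitali argument for the last step is equivalent to the paper's use of (generalized) dominated convergence.

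The only genuine difference is how you obtain the convergence in measure of \(D\,d(u_\ell,u)\). The paper isolates this as a separate proposition and proves it with a single clean geometric fact about the geodesic distance: for every \(\xi\in TN\),
\[
\lim_{\substack{\zeta\to\xi\\ \pi_N(\zeta)\ne\pi_N(\xi)}} (Dd_N)(\xi,\zeta)=0,
\]
which encodes exactly the ``first-order control of \(Dd\) near the diagonal'' you are after (the two partial gradients of \(d_N\) are opposite unit vectors along the minimizing geodesic, hence cancel when paired with \((Du_\ell,Du)\) as \(Du_\ell\to Du\)). Together with proposition~\ref{propo_equ} to handle the set \(\{u_\ell=u\}\), this gives the convergence in measure immediately. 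Your route via almost-isometric charts and comparison with the Euclidean identity \(\bigl\lvert D\lvert a-b\rvert\bigr\rvert\le\lvert Da-Db\rvert\) can be made to work, but it re-derives the same limit through coordinates and carries an extra error term coming from the fact that \(d_N\) is only approximately Euclidean in the chart; the paper's intrinsic formulation avoids that bookkeeping entirely.
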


\begin{propo}
Let \((u_\ell)_{\ell \in \N}\) be a sequence of colocally weakly differentiable maps from \(M \) to \(N\). 
If the sequence \((D u_\ell)_{\ell \in \N}\) converges to \(Du\) locally in measure, then \((D (d (u_\ell, u)))_{\ell \in \N} \) converges to \(0\) in measure.
\end{propo}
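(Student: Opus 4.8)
The plan is to reduce to an almost everywhere statement along subsequences and then to produce a pointwise bound on $\abs{D(d(u_\ell,u))}$ that forces convergence to $0$. Write $w_\ell = d(u_\ell,u) \colon M \to [0,\infty)$, where $d = d_N$. First I would note that $w_\ell$ is colocally weakly differentiable: the map $(u_\ell,u)\colon M \to N \times N$ is colocally weakly differentiable by the product property, and composition with the Lipschitz distance $d_N$ preserves colocal weak differentiability because the only non-$C^1$ behaviour of $d_N$ is at the diagonal, where $d_N$ is the square root of the smooth function $d_N^2$. Next, since $Du_\ell \to Du$ locally in measure and the bundle projection $T^*M \otimes TN \to M \times N$ is continuous, $(u_\ell)_{\ell\in\N}$ converges to $u$ locally in measure. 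Finally, to prove convergence in measure of $(Dw_\ell)_{\ell\in\N}$ it suffices that every subsequence admit a further subsequence along which $Dw_\ell \to 0$ almost everywhere; so I may assume throughout that $Du_\ell(x)\to Du(x)$ and $u_\ell(x)\to u(x)$ for almost every $x \in M$.

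The core of the argument is local on $N$. Fixing $y_0\in N$, I would take extended local charts $U\ni y_0$, $\varphi$, $\varphi^*$ as in lemma~\ref{lemme_diffeo}, chosen moreover so that $\overline U$ is compact and lies inside a geodesically convex ball; then $d_N^2$ is smooth on $U \times U$, $d_N$ is smooth off the diagonal there, and any two points of $U$ are joined by a unique minimizing geodesic. Put $A = u^{-1}(U)$ and $A_\ell = \{x\in A : u_\ell(x)\in U\}$, so that $\mathbf 1_{A_\ell}\to\mathbf 1_A$ almost everywhere on $A$. On the part of $A_\ell$ where $w_\ell = 0$ one has $Dw_\ell = 0$ almost everywhere, since $w_\ell \ge 0$ is weakly differentiable (and there $Du_\ell = Du$ almost everywhere by proposition~\ref{propo_equ}). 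Where $w_\ell > 0$, the pair $(u_\ell,u)$ takes values off the diagonal, where $d_N$ is smooth, and the chain rule for weakly differentiable maps (after cutting $d_N$ off to a $C^1_c$ function agreeing with it near the relevant values) gives
\[
  Dw_\ell(x)[e] = \psh{\nabla_1 d_N(u_\ell(x),u(x))}{Du_\ell(x)[e]}_{g_N} + \psh{\nabla_2 d_N(u_\ell(x),u(x))}{Du(x)[e]}_{g_N}.
\]
Here I would invoke the first variation of arc length: $\abs{\nabla_1 d_N}_{g_N} = \abs{\nabla_2 d_N}_{g_N} = 1$ and $\nabla_1 d_N(y,z) = -P_{y,z}\nabla_2 d_N(y,z)$, where $P_{y,z}$ denotes parallel transport from $T_zN$ to $T_yN$ along the minimizing geodesic. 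Since parallel transport is a linear isometry, this rewrites as $Dw_\ell(x)[e] = \psh{\nabla_1 d_N(u_\ell(x),u(x))}{Du_\ell(x)[e] - P(Du(x)[e])}_{g_N}$ with $P = P_{u_\ell(x),u(x)}$, whence $\abs{Dw_\ell(x)}_{\Lin} \le \abs{Du_\ell(x) - P\circ Du(x)}_{\gF}$ for almost every $x\in A_\ell$ (the bound being trivial where $w_\ell(x) = 0$).

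To conclude, I would let $\ell\to\infty$: for almost every $x\in A$ one has $x \in A_\ell$ for $\ell$ large, $u_\ell(x)\to u(x)$ and $Du_\ell(x)\to Du(x)$, and by continuity of parallel transport $P_{u_\ell(x),u(x)}\to\id_{T_{u(x)}N}$, so $P\circ Du(x)\to Du(x)$ and therefore $\abs{Du_\ell(x) - P\circ Du(x)}_{\gF}\to 0$ by continuity of the fibre metric $\gF$; thus $\abs{Dw_\ell(x)}_{\Lin}\to 0$ for almost every $x\in A$. Covering $N$ by countably many such charts (possible since $N$ is second countable) makes the sets $A$ cover $M$, whence $Dw_\ell\to 0$ almost everywhere on $M$ along the subsequence, which is what was required.

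The step I expect to be the main obstacle is the passage through the diagonal of $N\times N$: $d_N$ is not differentiable there, so the chain rule cannot be applied to $d_N\circ(u_\ell,u)$ directly, and the essential cancellation — that $\nabla_1 d_N$ and the parallel transport of $\nabla_2 d_N$ are opposite — must be isolated from the first variation of arc length away from the diagonal and then combined with the vanishing of $Dw_\ell$ almost everywhere on $\{w_\ell = 0\}$. A secondary point requiring care is the bare assertion that $d(u_\ell,u)$ is colocally weakly differentiable, since composition with a merely Lipschitz map need not preserve that property in general; it does here precisely because the singular behaviour of $d_N$ is confined to the diagonal, where $d_N^2$ is smooth.
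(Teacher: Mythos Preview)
Your proposal is correct and follows essentially the same strategy as the paper's (very terse) proof: reduce by subsequences to almost-everywhere convergence, handle the diagonal \(\{u_\ell = u\}\) via the equality of derivatives (proposition~\ref{propo_equ}), and off the diagonal use that \(D d_N\) applied to a pair of tangent vectors tends to \(0\) as the pair approaches the diagonal of \(TN\). The paper simply asserts this last limit; you actually derive it from the first variation of arc length and the fact that \(\nabla_1 d_N\) and \(\nabla_2 d_N\) are related by parallel transport along the minimizing geodesic, which yields your explicit bound \(\abs{D w_\ell}\le \abs{Du_\ell - P\circ Du}_{\gF}\). So the route is the same, but your write-up supplies the geometric content the paper leaves implicit.

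One remark on your ``secondary point'': your justification that \(d_N(u_\ell,u)\) is weakly differentiable via ``\(d_N = \sqrt{d_N^2}\) near the diagonal'' is not quite the right mechanism, since \(\sqrt{\cdot}\) is not Lipschitz at \(0\). The cleaner argument is the one already present in the paper (last part of proposition~\ref{thm_equi}): \(d_N\) is globally Lipschitz on \(N\times N\), and composition of a colocally weakly differentiable map with a compactly supported Lipschitz function into \(\R\) is weakly differentiable because the colocal weak derivative is bilocally integrable (so one can approximate the Lipschitz function by \(C^1_c\) functions and pass to the limit). This is also how the paper implicitly uses the proposition inside the proof of proposition~\ref{propo_continuity_distance}, where the ambient \(L^p\) integrability of \(\abs{Du_\ell}\) and \(\abs{Du}\) is available.
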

\begin{proof}
This follows from the fact that for the geodesic distance \(d\), for every \(\xi \in TN\), 
\[
 \lim_{\substack{\zeta \to \xi\\ \pi_N (\xi) \ne \pi_N (\xi)}} (D d)(\xi, \zeta) = 0,
\] 
and that \(Du_n = Du\) almost everywhere on the set where \(u_n = u\) (proposition~\ref{propo_equ}).
\end{proof}

\begin{proof}[Proof of proposition \ref{propo_continuity_distance}]
The property of convergence in measure is a consequence of the previous proposition. Since for every \(\ell \in \N\), almost everywhere in \(M\) 
\[ \abs{D(d(u_l,u))} \le \abs{Du_\ell}_{\gF} + \abs{Du}_{\gF},\]
by Lebesgue's dominated convergence theorem, the sequence \( (\abs{Du_\ell}_{\gF})_{\ell \in \N}\) converges to  \( \abs{Du}_{\gF}\) in \(L^p(M)\).

\end{proof}

%CONCORDANT DISTANCES
\subsection{Concordant distances and metrics}
In order to study more natural distances of the form
\[
  \dW(u, v) = \left(\int_M \dTN(Du,Dv)^p\right)^\frac{1}{p},
\]
we introduce and study concordant distances and metrics on fiber bundles \cite{blw}.

%concordant distance
\begin{de}
A continuous distance \(\dE\) on a normed topological vector bundle \((E, \pi_M, M)\) is \emph{concordant} with the norm \(\abs{\cdot}_E\)
if
\begin{enumerate}[(a)]
  \item (nonexpansiveness of the fiber projection) for every \(e_1,e_2 \in E\),
    \begin{equation*}
      \dE (\pi_M (e_1), \pi_M (e_2)) \le \dE(e_1, e_2);
    \end{equation*} 
  \item (equivalence with the norm) there exists \(\kappa > 0\) such that for every \(e \in E\),
    \begin{equation*}
      \kappa^{-1} \, \abs{e}_{E} \le \dE( e, \pi_M (e) ) \le \kappa \abs{e}_{E}.
    \end{equation*} 
\end{enumerate}
\end{de}

Here and in the sequel we identify \(M\) with the zero fiber subbundle \(M \times \{0\} \subset E\).

\begin{propo}[Stability of subbundles]
Let \((E, \pi_M, M)\) and \((\Tilde{E}, \pi_{\Tilde{M}}, \Tilde{M})\) be vector bundles 
and \(\Bar{\iota} \colon E \to \Tilde{E}\) be an injective bundle morphism that covers an embedding \(\iota \colon M \to \Tilde{M}\).
If \(d_{\Tilde{E}}\) is concordant with \(\abs{\cdot}_{\Tilde{E}}\), then the distance \(d_E\) defined for \(e_1, e_2 \in E\) by 
\[
  \dE (e_1, e_2) = d_{\Tilde{E}} (\Bar{\iota}(e_1), \Bar{\iota} (e_2))
\]
is concordant with \(\abs{\cdot}_{E}\) defined for \(e \in E\) by \(\abs{e}_{E} = \abs{\Bar{\iota} (e)}_{\Tilde{E}}\).
\end{propo}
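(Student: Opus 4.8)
The plan is to transport both defining properties of concordance along the injective bundle morphism $\Bar{\iota}$, so that essentially no new estimate is needed beyond those already available for $d_{\Tilde{E}}$. First I would verify the standing hypotheses: that $\abs{\cdot}_E$ makes $(E,\pi_M,M)$ a normed topological vector bundle and that $\dE$ is a continuous distance on it. Since $\Bar{\iota}$ restricts to a linear injection $E_x \to \Tilde{E}_{\iota(x)}$ on each fibre, $\abs{e}_E = \abs{\Bar{\iota}(e)}_{\Tilde{E}}$ is a norm on every fibre, and it is continuous because $\Bar{\iota}$ and $\abs{\cdot}_{\Tilde{E}}$ are; symmetry and the triangle inequality for $\dE$ are inherited verbatim from $d_{\Tilde{E}}$, while $\dE(e_1,e_2)=0 \iff \Bar{\iota}(e_1)=\Bar{\iota}(e_2) \iff e_1 = e_2$ uses the injectivity of $\Bar{\iota}$, and continuity of $\dE = d_{\Tilde{E}}\circ(\Bar{\iota}\times\Bar{\iota})$ follows from continuity of $\Bar{\iota}$.

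The one structural fact to record is that $\Bar{\iota}$ intertwines the fibre projections and carries the zero section of $E$ into that of $\Tilde{E}$: since $\Bar{\iota}$ covers $\iota$ we have $\pi_{\Tilde{M}}\circ\Bar{\iota} = \iota\circ\pi_M$, and linearity on fibres sends $0\in E_x$ to $0\in\Tilde{E}_{\iota(x)}$; under the identification of $M$ and $\Tilde{M}$ with their zero subbundles this says precisely $\Bar{\iota}(\pi_M(e)) = \pi_{\Tilde{M}}(\Bar{\iota}(e))$ for all $e\in E$.

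Granting this, property (a) follows in one line: for $e_1,e_2\in E$,
\[
  \dE(\pi_M(e_1),\pi_M(e_2)) = d_{\Tilde{E}}\bigl(\pi_{\Tilde{M}}(\Bar{\iota}(e_1)),\pi_{\Tilde{M}}(\Bar{\iota}(e_2))\bigr) \le d_{\Tilde{E}}(\Bar{\iota}(e_1),\Bar{\iota}(e_2)) = \dE(e_1,e_2),
\]
the inequality being nonexpansiveness of the fibre projection for $d_{\Tilde{E}}$. For property (b) I would take $\kappa>0$ from the concordance of $d_{\Tilde{E}}$ with $\abs{\cdot}_{\Tilde{E}}$ and compute, for $e\in E$, that $\dE(e,\pi_M(e)) = d_{\Tilde{E}}\bigl(\Bar{\iota}(e),\pi_{\Tilde{M}}(\Bar{\iota}(e))\bigr)$; the two-sided bound $\kappa^{-1}\abs{\Bar{\iota}(e)}_{\Tilde{E}} \le d_{\Tilde{E}}\bigl(\Bar{\iota}(e),\pi_{\Tilde{M}}(\Bar{\iota}(e))\bigr) \le \kappa\abs{\Bar{\iota}(e)}_{\Tilde{E}}$ together with $\abs{\Bar{\iota}(e)}_{\Tilde{E}} = \abs{e}_E$ then gives $\kappa^{-1}\abs{e}_E \le \dE(e,\pi_M(e)) \le \kappa\abs{e}_E$, with the very same constant $\kappa$.

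The argument has no real obstacle; the only point requiring care is the bookkeeping of the identifications of $M$ and $\Tilde{M}$ with their zero subbundles, so that expressions such as $\dE(\pi_M(e_1),\pi_M(e_2))$ and $\Bar{\iota}(\pi_M(e))$ are literally meaningful — once that is pinned down, concordance of $\dE$ is just the pushforward of concordance of $d_{\Tilde{E}}$ under $\Bar{\iota}$.
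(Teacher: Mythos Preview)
Your argument is correct and is exactly the natural one: pull back both conditions of concordance along \(\Bar{\iota}\), using that \(\Bar{\iota}\) intertwines the fibre projections (so \(\Bar{\iota}(\pi_M(e)) = \pi_{\Tilde{M}}(\Bar{\iota}(e))\) under the zero-section identifications), and then read off (a) and (b) with the same constant \(\kappa\). The paper states this proposition without proof, treating it as an immediate consequence of the definitions; your write-up simply makes explicit the bookkeeping the authors leave to the reader.
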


If \(E\) is a differentiable manifold, we study Riemannian metrics \(G_E\) on \(TE\) that give rise to a distance concordant with \(\abs{\cdot}_{g_E}\).
We denote by \(G_E\) and \(g_E\) the quadratic forms associated to the corresponding metrics. We recall that the \emph{vertical lift} is defined for each \(\nu \in \pi^{-1}_M(\{\pi_M(e)\})\) by
\begin{equation*}
 \Vertlift_e (\nu) = \frac{d}{dt} (e + t \, \nu)_{\arrowvert_{t=0}} \in T_e E.
\end{equation*}

%stronlgy concordant metric
\begin{de}
A metric \(G_E\) is \emph{strongly concordant} with the Euclidean structure \(g_E\) if 
\begin{enumerate}[(a)]
  \item for every \(\nu \in TE\),
    \[
      G_E (D \pi_M (\nu)) \le G_E (\nu); % in the sense (D \pi_M( \nu), 0) \in TM \times TE 
    \]
  \item there exists \(\kappa > 0\) such that for every \(\nu \in TE\),
    \[
      (\kappa^{-1} D g_E (\nu))^2 \le 4 g_E (\pi_{E} (\nu)) G_E (\nu),
    \]
  and for every \(e \in E\),
    \[
      G_E (\Vertlift_e (e)) \le \kappa^2 g_E (e).      
    \]
\end{enumerate}
\end{de}

We will show that strongly concordant metrics induce concordant distances. In fact, these distances have the stronger property of strong concordance.

%strongly concordant
\begin{de}
A continuous distance \(\dE\) on a normed topological vector bundle \((E, \pi_M, M)\) is \emph{strongly concordant} with the norm \(\abs{\cdot}_E\)
if
\begin{enumerate}[(a)]
  \item (nonexpansiveness of the fiber projection) for every \(e_1,e_2 \in E\),
    \begin{equation*}
      \dM(\pi_M (e_1), \pi_M (e_2)) \le \dE(e_1, e_2);
    \end{equation*} 
  \item (comparability with the norm) there exists \(\kappa > 0\) such that for every \(s, t \in \R\) and \(e \in E\),
  \[
    \dE (t e, s e) \le \kappa \abs{t e - s e}_E,
  \]
  and for every \(e_1, e_2 \in E\),
  \[
    \abs{\abs{e_1}_E - \abs{e_2}_E} \le \kappa \dE (e_1, e_2).
  \]

\end{enumerate}
\end{de}

\begin{propo}
The metric \(G_E\) is strongly concordant with \(g_E\) if and only if the geodesic distance \(d_E\) induced by the metric \(G_E\) is strongly concordant with \(\abs{\cdot}_{g_E}\). 
\end{propo}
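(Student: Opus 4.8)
The plan is to prove the equivalence condition by condition, matching the three inequalities defining strong concordance of the metric \(G_E\) with the three inequalities defining strong concordance of the distance \(d_E\): the contraction property \(G_E(D\pi_M(\nu)) \le G_E(\nu)\) with the nonexpansiveness \(d_M(\pi_M(e_1),\pi_M(e_2)) \le d_E(e_1,e_2)\); the vertical bound \(G_E(\Vertlift_e(e)) \le \kappa^2 g_E(e)\) with the shortness of radial segments \(d_E(te,se) \le \kappa\abs{te-se}_{g_E}\); and the bound \((\kappa^{-1}Dg_E(\nu))^2 \le 4 g_E(\pi_E(\nu)) G_E(\nu)\) with the Lipschitz estimate \(\bigabs{\abs{e_1}_{g_E}-\abs{e_2}_{g_E}} \le \kappa\, d_E(e_1,e_2)\). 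In each pairing the implication from the metric to the distance will be obtained by integrating the pointwise inequality along a suitable curve, and the implication from the distance to the metric by differentiating the distance inequality along a curve, using the standard fact that for a \(C^1\) curve \(c\) in \((E,G_E)\) the metric derivative \(\lim_{t\to 0} d_E(c(t),c(0))/\abs{t}\) equals the Riemannian speed \(\abs{\dot c(0)}_{G_E}\) (and similarly on \(M\)). Throughout I will use that \(g_E\) is a smooth nonnegative function on \(E\) whose zero set is exactly the zero section \(M\), so that \(\abs{\cdot}_{g_E}=\sqrt{g_E}\) is smooth on \(E\setminus M\) and \(Dg_E\) vanishes identically on \(M\), and I will keep the same constant \(\kappa\) on both sides.

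From \(G_E\) to \(d_E\): given \(e_1,e_2\in E\) and a curve \(\gamma\) joining them, condition (a) for \(G_E\) gives \(\abs{D\pi_M(\dot\gamma(t))}_{G_E} \le \abs{\dot\gamma(t)}_{G_E}\), so \(\pi_M\circ\gamma\), regarded in \(M\subset E\), has \(G_E\)-length at most that of \(\gamma\); taking the infimum over \(\gamma\) yields condition (a) for \(d_E\). For the radial estimate I take the affine curve \(\tau\mapsto c(\tau)=((1-\tau)s+\tau t)e\), which lies in the fibre over \(\pi_M(e)\) and has velocity \((t-s)\Vertlift_{c(\tau)}(e)\); linearity of the vertical lift gives \(\Vertlift_{\lambda e}(e)=\lambda^{-1}\Vertlift_{\lambda e}(\lambda e)\) for \(\lambda\ne 0\), so the vertical bound in condition (b) for \(G_E\) yields \(\abs{\Vertlift_{\lambda e}(e)}_{G_E}\le \kappa\abs{e}_{g_E}\) first for \(\lambda\ne 0\) and then, by continuity, for all \(\lambda\); integrating gives \(d_E(te,se)\le \kappa\abs{te-se}_{g_E}\). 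Finally, on \(E\setminus M\) the first inequality of condition (b) for \(G_E\) is exactly \(\abs{d(\sqrt{g_E})_e[\nu]}\le \kappa\abs{\nu}_{G_E}\), so \(\sqrt{g_E}\) is \(\kappa\)-Lipschitz along curves that avoid \(M\); I would then promote this to the global Lipschitz estimate by cutting any joining curve at the first and last times it meets \(M\) — on which \(\sqrt{g_E}=0\) — and bounding the increment of \(\sqrt{g_E}\) on the at most two complementary arcs, which stay in \(E\setminus M\).

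From \(d_E\) to \(G_E\): here I differentiate. Applying condition (a) for \(d_E\) to \(\pi_M\circ\gamma\), where \(\dot\gamma(0)=\nu\), and using the metric-derivative identity on both sides gives \(\abs{D\pi_M(\nu)}_{G_E}\le\abs{\nu}_{G_E}\), i.e. condition (a) for \(G_E\). Taking \(\gamma(\tau)=(1+\tau)e\), so that \(\dot\gamma(0)=\Vertlift_e(e)\), the bound \(d_E((1+\tau)e,e)\le \kappa\abs{\tau}\abs{e}_{g_E}\) and the metric-derivative identity give \(\abs{\Vertlift_e(e)}_{G_E}\le\kappa\abs{e}_{g_E}\), the vertical inequality of condition (b) for \(G_E\). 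For the remaining inequality of condition (b), for \(e\notin M\) and \(\nu\in T_eE\) the \(\kappa\)-Lipschitz estimate for \(\sqrt{g_E}\) gives \(\abs{d(\sqrt{g_E})_e[\nu]}\le\kappa\abs{\nu}_{G_E}\); squaring and using \(d(\sqrt{g_E})_e[\nu]=Dg_E(\nu)/(2\sqrt{g_E(e)})\) together with \(g_E(e)=g_E(\pi_E(\nu))\) yields \((\kappa^{-1}Dg_E(\nu))^2\le 4 g_E(\pi_E(\nu)) G_E(\nu)\), while for \(e\in M\) both sides vanish since \(Dg_E\) vanishes on \(M\).

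The hard part will be the single step above that is not bookkeeping: in the direction from \(G_E\) to \(d_E\), upgrading the differential Lipschitz bound for \(\sqrt{g_E}\) on \(E\setminus M\) to a Lipschitz bound valid on all of \(E\), since a nearly minimizing curve between two points may cross the zero section, where \(\sqrt{g_E}\) is not differentiable. The argument sketched above — exploiting that \(\sqrt{g_E}\) is continuous and vanishes on \(M\) and splitting the curve at the extreme times it touches \(M\) — resolves this, and a minor analogous care handles radial segments \(te\mapsto se\) whose parameter interval straddles \(0\), which the homogeneity computation already covers. As the constant \(\kappa\) is preserved through all the implications, the equivalence follows with no adjustment of constants.
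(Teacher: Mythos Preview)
Your forward direction (from the metric \(G_E\) to the distance \(d_E\)) follows the paper's approach exactly: integrate the pointwise inequality \(G_E(D\pi_M(\nu))\le G_E(\nu)\) along curves for condition~(a), use the radial segment \(\tau\mapsto (s+\tau(t-s))e\) together with the vertical-lift bound for the first half of~(b), and compute \(\frac{d}{d\tau}\sqrt{g_E(\gamma(\tau))}\) for the second half. You are in fact more careful than the paper at one point: the paper differentiates \(\sqrt{g_E\circ\gamma}\) without comment, whereas you flag that \(\sqrt{g_E}\) is not differentiable on the zero section and repair this by cutting the curve at its first and last contact with \(M\).

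Your converse direction --- differentiating the distance inequalities via the metric-derivative identity \(\lim_{t\to 0}d_E(c(t),c(0))/\lvert t\rvert=\lvert\dot c(0)\rvert_{G_E}\) --- is not in the paper at all: the paper's proof only establishes the implication from metric to distance, despite the ``if and only if'' in the statement. So here you are supplying an argument the paper omits, and your method (differentiate each distance inequality along a well-chosen curve to recover the corresponding pointwise metric inequality) is the natural one.
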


\begin{proof}
Let \(e_1, e_2\in E\) and let \(\gamma \in C^1 ([0,1], E)\) be a path
such that \(\gamma (0) = e_1\) and \(\gamma (1) = e_2\). 
Since \(G_E\) is strongly concordant with \(g_E\),
\[
  \int_0^1 \sqrt{G_E (D \pi_M (\gamma' (\tau)))}\,d\tau
  \le \int_0^1 \sqrt{G_E (\gamma' (\tau))}\,d\tau,
\]
and the divergence property of the fibers follows.

Let \(e \in E\) and let \(\gamma \in C^1 ([0,1], E)\) be defined for every \(\tau \in [0, 1]\) by \(\gamma (\tau) = (\tau t + (1-\tau)s) e\). By definition of the vertical lift, for every \(\tau \in [0, 1]\), we have \(\gamma' (\tau) = \Vertlift_{\gamma (\tau)} ((t - s)e)= \tau \Vertlift_{(t-s)e}((t-s)e)\). 
Consequently,
\begin{align*}
  d_E(s e,t e) & \le \int_0^1 \sqrt{G_E(\Vertlift_{\gamma(\tau)} (e))} \, d\tau \\
  & \le \int_0^1 \sqrt{G_E(\Vertlift_{(t-s)e} ((t-s)e)} \, d \tau \le \kappa \sqrt{g_E((t-s)e)}. 
\end{align*}

Let \(\gamma \in C^1 ([0,1], E)\) be a path such that \(\gamma (0) = e_1\) and \(\gamma (1) = e_2\).
By assumption
\begin{align*}
\sqrt{g_E(e_1)} - \sqrt{g_E(e_2)} & = \int_0^1 \frac{d}{d\tau} \sqrt{g_E(\gamma (\tau))} \, d\tau = \int_0^1 \frac{D g_E(\gamma'(\tau))}{2 \sqrt{g_E(\gamma (\tau))}} \, d\tau \\
& \le \kappa \int_0^1 \sqrt{G_E(\gamma'(\tau))}\, d\tau, 
\end{align*}
so \(\abs{\abs{e_1}_{g_E} - \abs{e_2}_{g_E}} \le \kappa d_E(e_1, e_2)\).
\end{proof}

We shall now show that the Sasaki \cite{sasaki} and the Cheeger--Gromoll metrics \cite{cheeger_gromoll}, which are two classical constructions of natural metrics on bundles \citelist{\cite{gud_kappos}\cite{kappos}}, are strongly concordant.

In order to define these metrics, we endow \(M\) with a Riemannian metric \(g_M\) and we endow \(E\) with a metric connection \(K_E \colon TE \to E\). 
We recall that \(K_E\) is a \emph{connection} if \(K_E\) is a bundle morphism from \((TE, \pi_E, E)\) to \((E, \pi_M, M)\) that covers \(\pi_M\) and a bundle morphism from \((TE, D \pi_M, TM)\) to \((E, \pi_M, M)\) that covers \(\pi_M \colon TM \to M\), and for every \(e \in E\), \(K_E \circ \Vertlift_e = \id\) on \(\pi^{-1}_M(\{\pi_M (e)\}\)) \cite{wendl}*{definition 3.9}. 
The connection \(K_E\) is \emph{metric} if for every \(\nu \in TE\),
\[
  D g_E (\nu) = 2 g_E (K_E (\nu),\pi_E (\nu)).
\]

The \emph{Sasaki metric} \(G_E^S\) \cite{sasaki} (see also \cite{docarmo}*{chapter 3, exercise 2}) is defined for every \(\nu \in TE\) by
\[
  G_E^S (\nu) = g_M (D \pi_M (\nu)) + g_E (K_E (\nu)).
\]

%sasaki metric
\begin{propo}\label{propo_sasaki}
The Sasaki metric \(G_E^S\) is strongly concordant with \(g_E\).
\end{propo}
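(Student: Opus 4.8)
The plan is to verify, directly from the definitions, the single inequality of condition (a) and the two inequalities of condition (b) in the definition of a strongly concordant metric, and to check that the constant \(\kappa = 1\) is admissible throughout. All three verifications are short, so I do not expect any serious obstacle; the only delicate point is bookkeeping about which hypothesis is used where.

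First I would dispose of condition (a): by the very definition \(G_E^S(\nu) = g_M(D\pi_M(\nu)) + g_E(K_E(\nu))\) of the Sasaki metric, the term \(g_E(K_E(\nu))\) is nonnegative, so \(g_M(D\pi_M(\nu)) \le G_E^S(\nu)\) for every \(\nu \in TE\). Next, for the derivative inequality of condition (b), I would invoke the metric compatibility of the connection \(K_E\), which gives \(D g_E(\nu) = 2 g_E(K_E(\nu), \pi_E(\nu))\); applying the Cauchy--Schwarz inequality for \(g_E\) on the fiber containing \(\pi_E(\nu)\) and \(K_E(\nu)\), and then bounding \(g_E(K_E(\nu)) \le G_E^S(\nu)\) as above, yields
\[
  (D g_E(\nu))^2 = 4 \bigl(g_E(K_E(\nu), \pi_E(\nu))\bigr)^2 \le 4\, g_E(K_E(\nu))\, g_E(\pi_E(\nu)) \le 4\, g_E(\pi_E(\nu))\, G_E^S(\nu),
\]
which is precisely \((\kappa^{-1} D g_E(\nu))^2 \le 4 g_E(\pi_E(\nu)) G_E^S(\nu)\) with \(\kappa = 1\). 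Finally, for the vertical-lift inequality of condition (b), I would note that the curve \(t \mapsto e + t e\) stays in the single fiber \(\pi_M^{-1}(\{\pi_M(e)\})\), so \(\pi_M\) is constant along it and therefore \(D\pi_M(\Vertlift_e(e)) = 0\); combined with the connection identity \(K_E \circ \Vertlift_e = \id\), which gives \(K_E(\Vertlift_e(e)) = e\), this produces \(G_E^S(\Vertlift_e(e)) = g_M(0) + g_E(e) = g_E(e)\), so again \(\kappa = 1\) works.

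The hard part, such as it is, is conceptual rather than computational: one must see that each of the two building blocks of the Sasaki metric is controlled by an explicit piece of structure --- \(D\pi_M(\nu)\) by the base metric \(g_M\) and \(K_E(\nu)\) by the fiber metric \(g_E\) --- and that the metric compatibility of \(K_E\) is needed only to handle \(D g_E\), while the vertical-lift and nonexpansiveness bounds rely solely on the nonnegativity of \(g_E\) and on the defining identities \(K_E \circ \Vertlift_e = \id\) and \(D\pi_M \circ \Vertlift_e = 0\).
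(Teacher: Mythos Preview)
Your proof follows the same route as the paper's and is essentially correct, with one small lacuna in condition (a). The definition asks for \(G_E^S(D\pi_M(\nu)) \le G_E^S(\nu)\), with \(G_E^S\) on \emph{both} sides, whereas you only establish \(g_M(D\pi_M(\nu)) \le G_E^S(\nu)\). The missing link is the equality \(G_E^S(D\pi_M(\nu)) = g_M(D\pi_M(\nu))\): viewing \(D\pi_M(\nu)\) as a tangent vector to the zero section \(M \subset E\), one has \(K_E(D\pi_M(\nu)) = 0\) (the zero section is parallel) and \(D\pi_M(D\pi_M(\nu)) = D\pi_M(\nu)\), so plugging into the Sasaki formula gives the equality. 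The paper makes this step explicit; with it added, your argument matches the paper's line for line, including the choice \(\kappa = 1\).
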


\begin{proof}
Since \(K_E (D \pi_M (\nu))= 0\), we have
\[
  G_E^S (D \pi_M (\nu)) 
  = g_M (D \pi_M (\nu)) \le G_E^S (\nu).
\]
Next, since the connection \(K_E\) is metric, we have for every \(\nu \in TE\),
\[ 
  (Dg_E (\nu))^2
  = 4 (g_E (K_E (\nu),\pi_M(\nu)))^2
  \le 4  g_E (K_E (\nu)) g_E (\pi_E(\nu))
  \le 4 G_E^S (\nu) g_E (\pi_E(\nu)).
\]
Finally, since \(D \pi_M (\Vertlift_e(e)) = 0\) and \(K_E (\Vertlift_e (e)) = e\), we conclude that 
\[
  G_E^S (\Vertlift_e (e)) = g_E (e). \qedhere
\]
\end{proof}

If \(d_E^S\) denotes the Sasaki geodesic distance, we have the following characterization \cite{sasaki}*{\S 2}:

\begin{propo}\label{lemme_sas_equi}
Let \(e_1, e_2 \in E\). Then
\begin{multline*}
d_E^S (e_1,e_2)^2 = \inf \Bigl\{ g_E (e_1 - P^{\gamma}(e_2)) + \int_0^1 g_M (\gamma' (\tau))\,d\tau \, \colon \,\\ \gamma \in C^1 ([0,1], M) ;  
 \gamma(0) = \pi_M(e_1) ,\gamma(1) = \pi_M(e_2) \Bigr\},
\end{multline*}
where \(P^\gamma \colon \pi^{-1}_M (\{\gamma (1)\}) \to \pi^{-1}_M (\{\gamma (0)\}) \) is the parallel transport along \(\gamma\) with respect to the connection \(K_E\). 
\end{propo}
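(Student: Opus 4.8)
The plan is to compute the Sasaki length of a lift by trivializing \(E\) along the base curve with the parallel transport of the metric connection \(K_E\), and then to reduce to an elementary minimization in a single fibre. Fix \(e_1, e_2 \in E\), and let \(V \in C^1([0,1], E)\) with \(V(0) = e_1\), \(V(1) = e_2\); set \(\gamma = \pi_M \circ V\), so that \(\gamma \in C^1([0,1], M)\) joins \(\pi_M(e_1)\) to \(\pi_M(e_2)\). For \(t \in [0,1]\) let \(\Pi_t \colon \pi^{-1}_M(\{\gamma(t)\}) \to \pi^{-1}_M(\{\gamma(0)\})\) be the parallel transport along \(\gamma\vert_{[0,t]}\), so that \(\Pi_0 = \id\) and \(\Pi_1 = P^\gamma\), and define \(W \colon [0,1] \to \pi^{-1}_M(\{\pi_M(e_1)\})\) by \(W(t) = \Pi_t(V(t))\). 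Then \(W\) is \(C^1\) (parallel transport depends regularly on the parameter), \(W(0) = e_1\), and \(W(1) = P^\gamma(e_2)\). Since \(K_E\) is a metric connection each \(\Pi_t\) is a linear isometry and \(K_E(V'(t)) = \Pi_t^{-1}(W'(t))\), which together with \(D\pi_M(V'(t)) = \gamma'(t)\) gives
\[
  G_E^S(V'(t)) = g_M(D\pi_M(V'(t))) + g_E(K_E(V'(t))) = g_M(\gamma'(t)) + g_E(W'(t)),
\]
and hence \(\int_0^1 G_E^S(V'(t))\,dt = \int_0^1 g_M(\gamma'(t))\,dt + \int_0^1 g_E(W'(t))\,dt\).

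Next I would use the standard identity \(d_E^S(e_1, e_2)^2 = \inf_V \int_0^1 G_E^S(V'(t))\,dt\), the infimum running over \(C^1\) curves \(V\) from \(e_1\) to \(e_2\). For the lower bound in the statement: by Cauchy--Schwarz in the fixed fibre \(\pi^{-1}_M(\{\pi_M(e_1)\})\), \(\int_0^1 g_E(W'(t))\,dt \ge g_E(W(1) - W(0)) = g_E(e_1 - P^\gamma(e_2))\), so every admissible \(V\) satisfies \(\int_0^1 G_E^S(V'(t))\,dt \ge \int_0^1 g_M(\gamma'(t))\,dt + g_E(e_1 - P^\gamma(e_2))\) with \(\gamma = \pi_M\circ V\); taking the infimum gives the inequality \(\ge\). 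For the reverse inequality: given any \(\gamma \in C^1([0,1], M)\) joining \(\pi_M(e_1)\) to \(\pi_M(e_2)\), put \(W(t) = (1-t)\,e_1 + t\,P^\gamma(e_2)\) and \(V(t) = \Pi_t^{-1}(W(t))\); then \(V \in C^1([0,1],E)\) joins \(e_1\) to \(e_2\) and the identity above gives \(\int_0^1 G_E^S(V'(t))\,dt = \int_0^1 g_M(\gamma'(t))\,dt + g_E(e_1 - P^\gamma(e_2))\), so taking the infimum over \(\gamma\) yields the inequality \(\le\), and hence equality.

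The substantive ingredients are the connection-theoretic fact that a metric connection turns parallel transport into an isometry, so that covariant differentiation along \(\gamma\) becomes ordinary differentiation of \(W = \Pi_\bullet(V)\), and the one-variable observation that the constant-speed segment minimizes the Dirichlet energy among \(C^1\) paths with prescribed endpoints in a Euclidean space. The only technical nuisance is the passage from the length infimum (the definition of the geodesic distance) to the energy infimum \(\inf_V\int_0^1 G_E^S(V'(t))\,dt\): reparametrizing a near-minimizing curve to constant speed may fail to preserve \(C^1\)-regularity at zeros of \(V'\), which is dealt with in the usual way by first perturbing \(V\) to have speed bounded below at arbitrarily small cost in length; alternatively one may simply invoke Sasaki's computation \cite{sasaki}*{\S 2}.
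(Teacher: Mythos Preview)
Your argument is correct and is the classical one: trivialize \(E\) along the base curve by parallel transport with respect to the metric connection \(K_E\), observe that the Sasaki energy decouples as \(\int_0^1 G_E^S(V') = \int_0^1 g_M(\gamma') + \int_0^1 g_E(W')\), and then minimize the vertical term by the straight segment in the fixed fibre. The only delicate point---passing from the length infimum to the energy infimum---you have identified and handled correctly.

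There is nothing to compare against: the paper does not supply its own proof of this proposition but simply records it as a known characterization, citing Sasaki \cite{sasaki}*{\S 2}. Your write-up is precisely that classical computation, and you even note this at the end.
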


The \emph{Cheeger--Gromoll metric} is defined for every \(\nu \in TE\) by
\begin{align*}
G_E^{CG} (\nu) = g_M (D\pi_M (\nu)) + \frac{g_E (K_E (\nu)) + (g_E (K_{E} (\nu), \pi_E (\nu)))^2}{1+g_E (\pi_E(\nu))}.
\end{align*}

%CG metric
\begin{propo}\label{propo_cheeger}
The Cheeger--Gromoll metric \(G_E^{CG}\) is strongly concordant with \(g_E\).
\end{propo}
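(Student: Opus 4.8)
The plan is to verify directly the three conditions defining a strongly concordant metric, following the structure of the proof of Proposition~\ref{propo_sasaki}. For a fixed $\nu \in TE$ I would abbreviate $a = g_E(\pi_E (\nu))$, $k = g_E(K_E (\nu))$ and $c = g_E(K_E (\nu), \pi_E (\nu))$, so that Cauchy--Schwarz gives $c^2 \le k\, a$ and, since $K_E$ is a metric connection, $Dg_E (\nu) = 2c$; the metric then reads $G_E^{CG} (\nu) = g_M (D\pi_M (\nu)) + (k + c^2)/(1 + a)$.

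First I would treat the nonexpansiveness of the fiber projection. As in the Sasaki case, a vector tangent to the zero section $M \times \{0\}$ has vanishing image under both $\pi_E$ and $K_E$ (the zero section being parallel), so that, under the identification of $M$ with the zero subbundle, $G_E^{CG}(D\pi_M(\nu)) = g_M(D\pi_M(\nu))$; since the fractional term in $G_E^{CG}(\nu)$ is nonnegative, this is $\le G_E^{CG}(\nu)$.

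Next, for the comparability with the norm, the bound on the vertical lift is an immediate computation from $D\pi_M(\Vertlift_e(e)) = 0$, $K_E(\Vertlift_e(e)) = e$ and $\pi_E(\Vertlift_e(e)) = e$, which gives $G_E^{CG}(\Vertlift_e(e)) = (g_E(e) + g_E(e)^2)/(1+g_E(e)) = g_E(e)$. The remaining inequality $(Dg_E(\nu))^2 \le 4\, g_E(\pi_E(\nu))\, G_E^{CG}(\nu)$ is the only delicate point: when $a > 0$, the bound $c^2 \le k\, a$ gives $k \ge c^2/a$, hence $(k+c^2)/(1+a) \ge (c^2/a + c^2)/(1+a) = c^2/a$, so $G_E^{CG}(\nu) \ge c^2/a$ and $(Dg_E(\nu))^2 = 4c^2 \le 4 a\, G_E^{CG}(\nu)$; when $a = 0$, Cauchy--Schwarz forces $c = 0$ and the inequality is trivial. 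All three conditions would then hold with $\kappa = 1$, and the conclusion would follow from the characterization of strongly concordant metrics proved above.

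The main (essentially only) obstacle is this last estimate: one cannot simply bound the numerator $k + c^2$ of the Cheeger--Gromoll term by $(1+a)\, G_E^{CG}(\nu)$, since the factor $1 + g_E(\pi_E(\nu))$ is unbounded. The role of the extra summand $(g_E(K_E(\nu),\pi_E(\nu)))^2$ in the numerator --- the feature distinguishing the Cheeger--Gromoll metric from the Sasaki metric --- is precisely to compensate this denominator via Cauchy--Schwarz, yielding the clean bound $G_E^{CG}(\nu) \ge (g_E(K_E(\nu),\pi_E(\nu)))^2 / g_E(\pi_E(\nu))$ that drives the argument.
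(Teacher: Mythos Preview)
Your proof is correct and follows essentially the same approach as the paper. The only cosmetic difference is in the derivation of \((Dg_E(\nu))^2 \le 4\,g_E(\pi_E(\nu))\,G_E^{CG}(\nu)\): the paper rewrites Cauchy--Schwarz as \(c^2 \le \frac{(k+c^2)\,a}{1+a}\) and then bounds the right-hand side by \(a\,G_E^{CG}(\nu)\), while you equivalently bound the fractional term of \(G_E^{CG}(\nu)\) from below by \(c^2/a\); your explicit handling of the case \(a=0\) is a small improvement in exposition.
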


\begin{proof}
We have
\[
  G_E^{CG} (D \pi_M (\nu)) 
  = g_M (D \pi_M (\nu)) \le G_E^S (\nu).
\]
Let \(\nu \in TE\), we have since the connection \(K_E\) is metric
\[
\begin{split}
  (D g_E (\nu))^2 & = 4 (g_E (K_E (\nu), \pi_E (\nu)))^2\\
  & \le 4 \Bigl(\frac{g_E (K_E (\nu)) g_E (\pi_E(\nu))}{1 + g_E (\pi_E (\nu))}
  + \frac{(g_E (K_E (\nu), \pi_E (\nu)))^2g_E (\pi_M(\nu))}{1 + g_E (\pi_E (\nu))}\Bigr)\\
  & \le 4 G_E^{CG} (\nu) g_E (\pi_E(\nu)).
\end{split}
\]
Finally, since \(\pi_M (\Vertlift_e (e)) = e\), \(D \pi_M (\Vertlift_e(e)) = 0\) and \(K_E (\Vertlift_e (e)) = e\), we obtain
\[
  G_E^{CG} (\Vertlift_e(e)) = \frac{g_E (e) + g_E (e)^2}{1 + g_E (e)} = g_E (e).\qedhere
\]
\end{proof}

%COMPARISON SASAKI -- CHEEGER GROMOLL
Finally we have
\begin{lemme}[Comparison between the Cheeger--Gromoll and Sasaki metrics]
\label{lemmaComparisonCGS}
For every \(\nu \in TE\),
\[
  G_E^{CG} (\nu) \le G_E^{S} (\nu).
\]
\end{lemme}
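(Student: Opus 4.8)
The plan is to observe that the two metrics share their ``horizontal'' term \(g_M(D\pi_M(\nu))\), so that the inequality reduces to a comparison of the two ``vertical'' terms, and that this comparison is nothing more than the Cauchy--Schwarz inequality in the fiber of \(E\).

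Concretely, I would first write out both definitions and note that, for every \(\nu \in TE\),
\[
  G_E^{S}(\nu) - G_E^{CG}(\nu)
  = g_E(K_E(\nu)) - \frac{g_E(K_E(\nu)) + \bigl(g_E(K_E(\nu),\pi_E(\nu))\bigr)^2}{1 + g_E(\pi_E(\nu))}.
\]
Since \(g_E(\pi_E(\nu)) \ge 0\), the denominator \(1 + g_E(\pi_E(\nu))\) is positive, so it suffices to show that
\[
  g_E(K_E(\nu)) + \bigl(g_E(K_E(\nu),\pi_E(\nu))\bigr)^2
  \le g_E(K_E(\nu)) \bigl(1 + g_E(\pi_E(\nu))\bigr).
\]
After cancelling the common term \(g_E(K_E(\nu))\) from both sides, this is exactly
\[
  \bigl(g_E(K_E(\nu),\pi_E(\nu))\bigr)^2 \le g_E(K_E(\nu))\, g_E(\pi_E(\nu)),
\]
which holds because \(K_E(\nu)\) and \(\pi_E(\nu)\) lie in the same fiber \(\pi_M^{-1}(\{\pi_M(\pi_E(\nu))\})\), on which \(g_E\) is a (positive semidefinite) inner product; this is the Cauchy--Schwarz inequality.

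There is no real obstacle here: the only point to be mildly careful about is that \(g_E\) denotes both the quadratic form and the associated bilinear form on each fiber, and that \(K_E(\nu)\) and \(\pi_E(\nu)\) indeed belong to a common fiber so that Cauchy--Schwarz applies; both facts are immediate from the definition of a connection recalled just above the statement. Hence the difference \(G_E^{S}(\nu) - G_E^{CG}(\nu)\) is nonnegative for every \(\nu \in TE\), which is the claim.
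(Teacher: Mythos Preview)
Your proof is correct and follows exactly the same approach as the paper: the paper's proof consists of a single line invoking the Cauchy--Schwarz inequality \((g_E(K_E(\nu),\pi_E(\nu)))^2 \le g_E(K_E(\nu))\,g_E(\pi_E(\nu))\), which is precisely the inequality you isolate after cancelling the common horizontal term and clearing the denominator. You have simply written out the surrounding algebra in more detail.
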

\begin{proof}
We observe that by the Cauchy--Schwarz inequality
\[
  (g_E (K_{E} (\nu), \pi_M (\nu)))^2
   \le g_E (K_E (\nu)) g_E (\pi_M (\nu)).\qedhere 
\]
\end{proof}

The Sasaki and Cheeger--Gromoll metrics are particular cases for \(\lambda = 1\) and \(\lambda =\frac{1}{2}\) of metrics
\begin{equation}\label{eqLambda}
G_E^\lambda (\nu) = g_M (D\pi_M (\nu))
+ \frac{\lambda g_E (K_E (\nu)) + (1 - \lambda)(g_E (K_{E} (\nu), \pi_M (\nu)))^2}{\lambda + (1 - \lambda) g_E (\pi_M (\nu))}.
\end{equation}
In the limit case \(\lambda = 0\), the metric
\[
 G_E^0 (\nu) = g_M (D\pi_M (\nu))
+ \frac{(g_E (K_{E} (\nu), \pi_M (\nu)))^2}{g_E (\pi_M (\nu))} 
\]
is degenerate; the associated degenerate distance is 
\[
 d_E (e_1, e_2)^2 = d_M (\pi (e_1), \pi_M(e_2))^2 + \abs{\abs{e_1}_{g_E} - \abs{e_2}_{g_E}}^2.
\]

In our study of Sobolev spaces, we will be interested in metrics on the bundle \((T^*M \otimes TN, \pi_{M\times N}, M \times N)\). 
Assume that the Riemannian manifolds are of class \(C^2\) and that we have metric connections \(K_{T^*M}\) and \(K_{TN}\) on \(T^*M\) and \(TN\). 
Such connections are given by the Levi-Civita connection.
We can then define for \(v \in T(T^*M)\) and \(w \in T(TN)\),
\[ 
  K_{T^*M \otimes TN}(v \otimes w) = K_{T^*M}(v) \otimes K_{TN}(w).
\]
Moreover \(K_{T^*M \otimes TN}\) is metric with \(\gF\).

%APPLICATION SOBOLEV SPACES
Given a concordant distance \(\dTN\), we define the metric \(\delta_{1, p}\) for \(u, v \in \dot{W}^{1, p} (M, N)\) by
\[
  \dW(u, v) = \left(\int_M \dTN(Du,Dv)^p \, d\mu \right)^\frac{1}{p} \in [0, \infty].
\]
This distance can be infinite. This will happen for instance if \(M\) has infinite Riemannian volume and \(u, v\) are distinct constant maps.

We first prove that concordant distances characterize Sobolev maps. Given two maps \(u, v : M \to N\) we denote for every \(x \in M\),
\[
 d (u, v) (x) = d((x, u(x)), (x, v (x))),
\]
with the usual identification of \(M \times N\) with \(M \times N \times \{0\} \subset T^*M \otimes TN\).

\begin{propo}\label{SobolevChara}
Assume that \(\dTN\) is concordant with \(\abs{\cdot}_{\gF}\). If \(v\in \Wp\) then \(u\in \Wp\) and \(d (u, v) \in \Lr\) if and only if the map \(u\) is colocally weakly differentiable and \(\dTN(Du,Dv) \in \Lr\).
\end{propo}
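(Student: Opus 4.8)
The plan is to derive both implications from the triangle inequality together with the two defining properties of a concordant distance, applied pointwise to $Du$ and $Dv$ viewed as measurable sections of $T^*M \otimes TN$ over $M \times N$. Throughout I use the standing identification of $M \times N$ with the zero subbundle, so that $\pi_{M\times N}(Du(x)) = (x,u(x))$, $\pi_{M\times N}(Dv(x)) = (x,v(x))$, and $d(u,v)(x) = \dTN\bigl(\pi_{M\times N}(Du(x)),\pi_{M\times N}(Dv(x))\bigr)$. Before comparing $L^p$ norms I would record the measurability of the relevant functions: since $u$ and $v$ are measurable and $\dTN$ is continuous, $x \mapsto d(u,v)(x)$ is measurable, and since $Du$ and $Dv$ are measurable bundle morphisms and $\dTN$ is continuous on $(T^*M \otimes TN) \times (T^*M \otimes TN)$, the function $x \mapsto \dTN(Du(x),Dv(x))$ is measurable.

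For the implication from ``$u$ colocally weakly differentiable and $\dTN(Du,Dv) \in L^p(M)$'' to ``$u \in \Wp$ and $d(u,v) \in L^p(M)$'': nonexpansiveness of the fiber projection gives $d(u,v)(x) \le \dTN(Du(x),Dv(x))$ almost everywhere, hence $d(u,v) \in L^p(M)$. Applying the lower comparability bound to $Du(x)$, then the triangle inequality through $Dv(x)$ and $(x,v(x))$, then the upper comparability bound to $Dv(x)$ and nonexpansiveness once more, one gets
\[
  \abs{Du(x)}_{\gF} \le \kappa\, \dTN\bigl(Du(x),(x,u(x))\bigr) \le 2\kappa\, \dTN(Du(x),Dv(x)) + \kappa^2 \abs{Dv(x)}_{\gF}
\]
almost everywhere. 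Since $v \in \Wp$ gives $\abs{Dv}_{\gF} \in L^p(M)$, this yields $\abs{Du}_{\gF} \in L^p(M)$, and together with the assumed colocal weak differentiability of $u$ we conclude $u \in \Wp$.

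For the converse, if $u \in \Wp$ and $d(u,v) \in L^p(M)$, then $u$ is colocally weakly differentiable by definition, and the triangle inequality through $(x,u(x))$ and $(x,v(x))$ combined with the upper comparability bound gives
\[
  \dTN(Du(x),Dv(x)) \le \kappa \abs{Du(x)}_{\gF} + d(u,v)(x) + \kappa \abs{Dv(x)}_{\gF}
\]
almost everywhere, where all three terms on the right lie in $L^p(M)$ (the first two because $u \in \Wp$ and $d(u,v) \in L^p(M)$, the third because $v \in \Wp$); hence $\dTN(Du,Dv) \in L^p(M)$. No step is a genuine obstacle here: the only matters requiring care are the measurability of $d(u,v)$ and $\dTN(Du,Dv)$, the correct use of the identification of $M\times N$ with the zero subbundle, and keeping track of which concordance inequality (upper bound, lower bound, or nonexpansiveness) is invoked at each stage. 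If preferred, the two implications can be merged by first establishing the single two-sided almost-everywhere estimate relating $\abs{Du}_{\gF}$, $\abs{Dv}_{\gF}$, $d(u,v)$ and $\dTN(Du,Dv)$, and then reading off each direction.
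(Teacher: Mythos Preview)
Your argument is correct and follows essentially the same route as the paper's own proof: both directions are obtained from the triangle inequality for \(\dTN\) combined with the nonexpansiveness of the fiber projection and the two-sided comparison \(\kappa^{-1}\abs{e}_{\gF}\le \dTN(e,\pi_{M\times N}(e))\le \kappa\abs{e}_{\gF}\). Your version is in fact slightly more careful (you keep track of the extra \(d(u,v)\) term via nonexpansiveness, yielding the constant \(2\kappa\), and you make the measurability of the integrands explicit), but the substance is identical.
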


\begin{proof}
The first assumptions follows from the fact that 
\[ 
\begin{split}
\dTN(Du,Dv) &\le \dTN (Du, u) + \dTN (u, v) + \dTN (v, Dv)\\
& \le d (u, v) + \kappa (\abs{Du}_{\gF} + \abs{Dv}_{\gF})
\end{split}
\]
almost everywhere in \(M\).
Conversely we observe that, by the nonexpansiveness property
\[
  d (u, v) \le \dTN (Du, Dv)
\]
almost everywhere in \(M\)
and that by the equivalence assumption
\[ 
\begin{split}
  \abs{Du}_{\gF} & \le \kappa \dTN (Du, u)
  \le \kappa \bigl(\dTN (Du, Dv) + \dTN (Dv, v)\bigr)\\
  & \le \kappa \dTN (Du, Dv) + \kappa^2 \abs{Dv}_{\gF},
\end{split}
\]
almost everywhere in \(M\); the converse statement follows.
\end{proof}

%SAME TOPO
\begin{propo}\label{sob_equi}
Assume that \(\dTN\) is concordant with \(\abs{\cdot}_{\gF}\). If \((u_\ell)_{\ell \in \N}\) is a sequence in \(\Wp\) and \( u \in \dot{W}^{1,p}(M,N)\) then  the sequence \((\dTN(Du_\ell,Du))_{\ell \in \N}\) converges to \(0\) in \(\Lr\) if and only if the sequence \((u_\ell)_{\ell \in \N}\) converges strongly to \(u\) in \(\Wp\) and the sequence \((d (u_\ell, u))_{\ell \in \N}\) converges to \(0\) in \(\Lr\).
\end{propo}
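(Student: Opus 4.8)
The strategy is to prove Proposition~\ref{sob_equi} as the sequential counterpart of Proposition~\ref{SobolevChara}: from the concordance of \(\dTN\) with \(\abs{\cdot}_{\gF}\) one extracts a set of pointwise comparison inequalities relating \(\dTN(Du_\ell,Du)\), \(\abs{Du_\ell}_{\gF}\), \(\abs{Du}_{\gF}\) and \(\dTN(u_\ell,u)\), and then one passes to the limit in \(\Lr\). Writing \(\kappa\) for the concordance constant and using, almost everywhere on \(M\), the triangle inequality, the nonexpansiveness of the fibre projection \(\pi_{M\times N}\) (so that \(\dTN(u_\ell,u)\le\dTN(Du_\ell,Du)\), with \(u_\ell\), \(u\) identified with their graphs in \(T^*M\otimes TN\)) and the two--sided equivalence \(\kappa^{-1}\abs{\cdot}_{\gF}\le\dTN(\,\cdot\,,\pi_{M\times N}(\,\cdot\,))\le\kappa\abs{\cdot}_{\gF}\), I obtain
\[
 \dTN(u_\ell, u) \le \dTN(Du_\ell, Du), \qquad \abs{Du_\ell}_{\gF} \le 2\kappa\, \dTN(Du_\ell, Du) + \kappa^2\abs{Du}_{\gF},
\]
\[
 \dTN(Du_\ell, Du) \le \kappa\abs{Du_\ell}_{\gF} + \dTN(u_\ell, u) + \kappa\abs{Du}_{\gF},
\]
together with the symmetric form of the middle inequality. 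Since \(\dTN\) is a continuous distance inducing the topology of \(T^*M\otimes TN\), the convergence \(\dTN(Du_\ell,Du)\to 0\) locally in measure is equivalent to \(Du_\ell\to Du\) locally in measure (as observed in the remarks following the definition of local convergence in measure); in both implications this lets us, along an arbitrary subsequence, extract a further subsequence along which \(Du_\ell\to Du\), hence \(\dTN(Du_\ell,Du)\to 0\) and \(\abs{Du_\ell}_{\gF}\to\abs{Du}_{\gF}\), almost everywhere.

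For the direct implication, suppose \(\dTN(Du_\ell,Du)\to 0\) in \(\Lr\). The first inequality gives \(\dTN(u_\ell,u)\to 0\) in \(\Lr\), and \(\Lr\)--convergence implies convergence locally in measure, so \(Du_\ell\to Du\) locally in measure; by Definition~\ref{def_convergence_sobolev} it remains to show \(\abs{Du_\ell}_{\gF}\to\abs{Du}_{\gF}\) in \(\Lr\). From the second inequality, \(\abs{Du_\ell}_{\gF}^p\le G_\ell:=\bigl(2\kappa\,\dTN(Du_\ell,Du)+\kappa^2\abs{Du}_{\gF}\bigr)^p\), and \(G_\ell\to\kappa^{2p}\abs{Du}_{\gF}^p\) in \(L^1(M)\) because \(\dTN(Du_\ell,Du)\to 0\) in \(\Lr\). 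Passing to a subsequence along which \(\abs{Du_\ell}_{\gF}\to\abs{Du}_{\gF}\) almost everywhere and applying Fatou's lemma to the nonnegative functions \(G_\ell-\abs{Du_\ell}_{\gF}^p\) yields \(\limsup_{\ell\to\infty}\int_M\abs{Du_\ell}_{\gF}^p\le\int_M\abs{Du}_{\gF}^p\), while Fatou's lemma applied to \(\abs{Du_\ell}_{\gF}^p\) gives the matching lower bound for the \(\liminf\); hence \(\int_M\abs{Du_\ell}_{\gF}^p\to\int_M\abs{Du}_{\gF}^p\) and, combined with the almost everywhere convergence, \(\abs{Du_\ell}_{\gF}\to\abs{Du}_{\gF}\) in \(\Lr\) along the subsequence \cite{bogachev}*{proposition 4.7.30}. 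Since every subsequence admits such a further subsequence, the whole sequence converges in \(\Lr\).

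For the converse, suppose \((u_\ell)_{\ell\in\N}\) converges strongly to \(u\) in \(\Wp\) and \(\dTN(u_\ell,u)\to 0\) in \(\Lr\); thus \(Du_\ell\to Du\) locally in measure, \(\abs{Du_\ell}_{\gF}\to\abs{Du}_{\gF}\) in \(\Lr\) and \(\dTN(u_\ell,u)\to 0\) in \(\Lr\). By the third inequality, \(\dTN(Du_\ell,Du)^p\le H_\ell:=\bigl(\kappa\abs{Du_\ell}_{\gF}+\dTN(u_\ell,u)+\kappa\abs{Du}_{\gF}\bigr)^p\), and \(H_\ell\to(2\kappa\abs{Du}_{\gF})^p\) in \(L^1(M)\); along a subsequence where \(\dTN(Du_\ell,Du)\to 0\) almost everywhere, Fatou's lemma applied to \(H_\ell-\dTN(Du_\ell,Du)^p\ge 0\) gives \(\limsup_{\ell\to\infty}\int_M\dTN(Du_\ell,Du)^p\le 0\), so \(\dTN(Du_\ell,Du)\to 0\) in \(\Lr\) along the subsequence, hence along the whole sequence by the same subsequence principle. (In particular \(\dTN(Du_\ell,Du)\in\Lr\), consistently with Proposition~\ref{SobolevChara}.)

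The delicate point is exactly this limiting passage: because \(\kappa\) may be strictly larger than \(1\), one cannot simply take \(\Lr\)--norms in the comparison inequalities, and because local convergence in measure only yields almost everywhere convergence along subsequences, the argument must combine such subsequential almost everywhere convergence with the \(L^1\)--convergence of the dominating sequences \(G_\ell\), \(H_\ell\) through Fatou's lemma --- a generalized dominated convergence theorem --- and then restore the conclusion for the full sequence via the standard subsequence criterion for convergence. Everything else is bookkeeping with the triangle inequality and the two concordance axioms.
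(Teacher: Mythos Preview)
Your proof is correct and follows essentially the same route as the paper: the same three pointwise comparison inequalities derived from concordance, the same reduction of both implications to a generalized dominated convergence argument. The only difference is cosmetic---the paper invokes Lebesgue's dominated convergence theorem with an \(L^1\)-convergent dominating sequence directly \cite{bogachev}*{theorem 2.8.5}, whereas you unpack that theorem via Fatou's lemma applied to \(G_\ell-\abs{Du_\ell}_{\gF}^p\) and \(H_\ell-\dTN(Du_\ell,Du)^p\) along almost-everywhere convergent subsequences, then recover the full sequence by the subsequence principle.
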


\begin{proof}
First assume that \((u_\ell)_{\ell \in \N}\) converges strongly to \(u\) in \(\Wp\) and \((d(u_\ell, u))_{\ell \in \N}\) converges to \(0\) in \(\Lr\). By the definition of convergence in \(\Wp\) (definition~\ref{def_convergence_sobolev}), 
\((\abs{D u_\ell}_{\gF})_{\ell \in \N}\) converges to \(\abs{D u}_{\gF}\) in \(\Lr\)
and \((D u_\ell)_{\ell \in \N}\) converges to \(Du\) locally in measure.
The latter convergence implies that \((\dTN (Du_\ell, Du))_{\ell \in \N}\) converges to \(0\) in measure. Since for every \(\ell \in \N\),
\[
  \dTN(Du_\ell, Du) \le d (u_\ell, u) + \kappa (\abs{Du_\ell}_{\gF} + \abs{Du}_{\gF})
\]
almost everywhere in \(M\), 
the conclusion follows from Lebesgue's dominated convergence theorem \cite{bogachev}*{theorem 2.8.5}.

Conversely, if \((\dTN(Du_\ell,Du))_{\ell \in \N}\) converges to \(0\) in \(\Lr\),
then \((\dTN(Du_\ell,Du))_{\ell \in \N}\) converges to \(0\) in measure and thus \((D u_\ell)_{\ell \in \N}\) converges to \(D u\) locally in measure.
Moreover, \((\abs{D u_\ell}_{\gF})_{\ell \in \N}\) converges to \(\abs{D u}_{\gF}\) in measure. Since for every \(\ell \in \N\),
\[
  \abs{D u_\ell}_{\gF} \le 	\kappa \dTN (Du_\ell, Du) + \kappa^2 \abs{D u}_{\gF}
\]
almost everywhere in \(M\), 
by Lebesgue's dominated convergence theorem, the sequence \((\abs{D u_\ell}_{\gF})_{\ell \in \N}\) converges to \(\abs{D u}_{\gF}\) in \(\Lr\).
Finally, since \(d (u_\ell, u) \le \dTN (Du_\ell, Du)\), it is clear that the sequence \((d (u_\ell, u))_{\ell \in \N}\) converges to \(0\) in \(\Lr\).
\end{proof}

Finally, Sobolev spaces are complete under all these metrics.

%THM SOBOLEV COMPLETE
\begin{propo}\label{complete}
If for almost every \(x \in M\), the metric space \((\{x\} \times N, d)\) is complete and \(d\) is concordant with \(\abs{\cdot}_{\gF}\), then the Sobolev space \((\Wp, \dW)\) is complete. 
\end{propo}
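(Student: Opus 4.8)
The plan is to show that every sequence $(u_\ell)_{\ell\in\N}$ in $\Wp$ that is Cauchy for $\dW$ converges in $\dW$, following the classical three-step scheme: extract an almost everywhere convergent subsequence, identify the limit as a Sobolev map via the weak closure property (proposition~\ref{prop_closure}), and upgrade almost everywhere convergence of the derivatives to $L^p$ convergence by a domination argument. First I would pass to a subsequence $(u_{\ell_k})_{k\in\N}$ with $\dW(u_{\ell_k},u_{\ell_{k+1}})\le 2^{-k}$ and set $S_k=\sum_{j\ge k} \dTN(Du_{\ell_j},Du_{\ell_{j+1}})$, so that $S_1\in \Lr$ (by Minkowski's inequality), $S_k\downarrow 0$ almost everywhere, and $\dTN(Du_{\ell_k},Du_{\ell_m})\le S_k$ for $m\ge k$; in particular $(Du_{\ell_k}(x))_{k}$ is $\dTN$-Cauchy for almost every $x$. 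By the fiber-projection nonexpansiveness (a) of a concordant distance, $((x,u_{\ell_k}(x)))_k$ is also $\dTN$-Cauchy for almost every $x$, hence converges to some $(x,u(x))$ since $(\{x\}\times N,\dTN)$ is complete; this defines a measurable limit map $u\colon M\to N$.

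Next I would produce the limiting derivative. Using the norm equivalence (b), the triangle inequality and nonexpansiveness, one bounds
\[
  \abs{Du_{\ell_k}(x)}_{\gF}\le \kappa\,\dTN\bigl(Du_{\ell_k}(x),(x,u_{\ell_k}(x))\bigr)\le G(x),
\]
where $G=\kappa\bigl(2S_1+\dTN(Du_{\ell_1},u_{\ell_1})\bigr)\in \Lr$ because $\dTN(Du_{\ell_1},u_{\ell_1})\le\kappa\abs{Du_{\ell_1}}_{\gF}$ and $u_{\ell_1}\in\Wp$. Since moreover $\pi_{M\times N}(Du_{\ell_k}(x))\to(x,u(x))$, the sequence $(Du_{\ell_k}(x))_k$ stays in a compact subset of $T^*M\otimes TN$, on which $\dTN$ is complete because it is continuous \cite{munkres}*{theorem 45.1}; it therefore converges to some $\upsilon(x)$, and $\upsilon\colon TM\to TN$ is a measurable bundle morphism covering $u$, with $\dTN(Du_{\ell_k},\upsilon)\le S_k$ almost everywhere and $\abs{\upsilon}_{\gF}\le G\in\Lr$.

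I would then apply the weak closure property (proposition~\ref{prop_closure}) to the subsequence $(u_{\ell_k})_k$: it converges to $u$ locally in measure, $\liminf_k\int_M\abs{Du_{\ell_k}}_{\gF}^p\le\int_M G^p<\infty$, and when $p=1$ the family $(Du_{\ell_k})_k$ is dominated by the fixed function $G\in L^1(M)$, hence bilocally uniformly integrable; since $(Du_{\ell_k})_k$ also converges to $\upsilon$ locally in measure, proposition~\ref{prop_closure} gives $u\in\Wp$ and $Du=\upsilon$. Finally, $\dTN(Du_{\ell_k},Du)\le S_k\to 0$ almost everywhere with $S_k\le S_1\in\Lr$, so by Lebesgue's dominated convergence theorem $\dW(u_{\ell_k},u)^p=\int_M\dTN(Du_{\ell_k},Du)^p\to 0$; combined with the Cauchy hypothesis and the triangle inequality for $\dW$, the whole sequence $(u_\ell)_{\ell\in\N}$ converges to $u$ in $\dW$.

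The measurability statements and the compactness of the fibrewise sets are routine. The one step demanding care is the identification $u\in\Wp$ with $Du=\upsilon$: since $\dTN$ is only assumed \emph{concordant} and not strongly concordant, the norm $\abs{\cdot}_{\gF}$ is not controlled from above by $\dTN$, so one cannot estimate $\abs{\abs{Du_{\ell_k}}_{\gF}-\abs{Du}_{\gF}}$ directly and must genuinely go through the weak closure property; verifying its hypotheses — especially the uniform integrability needed for $p=1$ — is precisely why the $\Lr$ majorant $G$ is isolated along the way.
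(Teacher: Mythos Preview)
Your proof is correct and follows essentially the same scheme as the paper's: extract a subsequence that is \(d\)-Cauchy almost everywhere, use nonexpansiveness and fibrewise completeness to get the limit map \(u\), use concordance plus compactness of closed bounded sets in the bundle to get the limiting morphism \(\upsilon\), and then invoke the weak closure property (proposition~\ref{prop_closure}) to identify \(\upsilon = Du\). The only notable difference is the endgame: you build an explicit \(L^p\) majorant \(G\) via summable increments and conclude by dominated convergence, whereas the paper bounds \(\abs{Du_{\ell_k}}_{\gF}\) by \(\kappa\, d(Du_{\ell_k},Du_{\ell_0}) + \kappa^2 \abs{Du_{\ell_0}}_{\gF}\) and finishes with Fatou's lemma; your version has the modest advantage of making the \(p=1\) uniform integrability hypothesis of proposition~\ref{prop_closure} visibly satisfied.
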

\begin{proof}
Let \((u_\ell)_{\ell \in \N}\) be a Cauchy sequence for the metric \(\dW\).
There exists a subsequence \((D u_{\ell_k})_{k \in \N}\) which is a Cauchy sequence for \(d\) almost everywhere in \(M\). 
By the nonexpansiveness of the projection, \((u_{\ell_k})_{k \in \N}\) is a Cauchy sequence for \(d\) almost everywhere in \(M\). Since for almost every \(x \in M\), the metric space \((\{x\} \times N, d)\) is complete, the sequence \((u_{\ell_k})_{k\in \N}\) converges almost everywhere to a map \(u : M \to N\).

Since the distance \(d\) is concordant, for every \(k \in \N\),
\[
 \abs{D u_{\ell_k}}_{\gF} \le \kappa d (D u_{\ell_k}, D u_{\ell_0}) + \kappa^2 \abs{D u_{\ell_0}}_{\gF}
\]
almost everywhere in \(M\) and we deduce that the sequence \((\abs{D u_{\ell_k}})_{k \in \N}\) is bounded almost everywhere in \(M\) and 
\[
 \limsup_{k \to \infty} \int_{M} \abs{D u_{\ell_k}}_{\gF}^p < \infty.
\]
Since the distance \(d\) is complete on any compact subset of \(T^*M \otimes TN\), the sequence \((D u_{\ell_k})_{k \in \N}\)
converges almost everywhere to a bundle morphism \(\upsilon : TM \to TN\). 
By the closure property (proposition~\ref{prop_closure}), we deduce that \(u \in \dot{W}^{1,p}(M,N)\) and \(D u = \upsilon\).
By Fatou's lemma, for every \(k \in \N\),
\[
 \delta_{1, p} (u_{\ell_k}, u) \le \liminf_{j \to \infty} \delta_{1, p} (u_{\ell_k}, u_{\ell_j}),
\]
and thus 
\[
  \lim_{k \to \infty} \delta_{1, p} (u_{\ell_k}, u) = 0.
\]
Since the sequence \((u_\ell)_{\ell \in \N}\) is a Cauchy sequence for \(\delta_{1, p}\), the conclusion follows.
\end{proof}

% COMPARISON METRICS ON WP
If we consider a Sobolev space \(\dot{W}^{1, p} (M, N)\) we have various distances under the hand:
if \( \iota \colon N \to \R^\nu\) is an  isometric embedding and \(\iota (N)\) is closed, the Chiron distance, 
\[
 \delta_{1, p}^C (u, v) = \Bigl(\int_{M} \bigl(\abs{\iota \circ u - \iota \circ v}^2 + \abs{\abs{D(\iota \circ u)} - \abs{D(\iota \circ v)}}^2 \bigr)^{\frac{p}{2}} \Bigr)^\frac{1}{p}
\]
if \(M\) and \(N\) are of class \(C^2\), the Sasaki distance 
\[
  \delta_{1, p}^S (u, v) = \Bigl(\int_{M} (d_{T^* M \otimes TN}^S (Du, Dv))^p\Bigr)^\frac{1}{p},
\]
the Cheeger--Gromoll distance
\[
  \delta_{1, p}^{CG} (u, v) = \Bigl(\int_{M} (d_{T^* M \otimes TN}^{CG} (Du, Dv))^p  \Bigr)^\frac{1}{p},
\]
and the induced distance,
\[
  \delta_{1, p}^\iota (u, v) = \Bigl(\int_{M} \bigl(\abs{\iota \circ u - \iota \circ v}^2 + \abs{D(\iota \circ u) - D(\iota \circ v)}^2 \bigr)^{\frac{p}{2}}\Bigr)^\frac{1}{p}.
\]
(The distance on the right is in fact the Sasaki distance on \(T^*M \otimes T\R^\nu\).)
It follows from the previous results that the Sobolev space \(\dot{W}^{1, p} (M, N)\) has the same topology for all these distances and is complete except for the Chiron distance. 

By strong concordance of all the metrics under hand, we have 
\[
 \delta_{1, p}^C (u, v) \le \sqrt{2} \min(\delta_{1, p}^S (u, v), \delta_{1, p}^{CG} (u, v), \delta_{1, p}^\iota (u, v) ).
\]
That is the identity map from \(\dot{W}^{1,p}(M,N)\) endowed with any of the distance in the right-hand side is a Lipschitz map into \((\dot{W}^{1,p}(M,N), \delta_{1, p}^C)\). Since the convergences are equivalent (proposition~\ref{corChironEquiv}), the inverse of the identity is not uniformly continuous because \((\dot{W}^{1,p}(M, N), \delta_{1,p}^C)\) is not complete.

\begin{propo}
If \(M\) and \(N\) are nonempty Riemannian manifold and  \(p\in [1,\infty[\), then
\[ 
  (\dot{W}^{1,p}(M, N), \delta_{1,p}^C)
\]
is not complete. 
\end{propo}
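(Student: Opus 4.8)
The plan is to exhibit, for positive-dimensional $M$ and $N$, a sequence in $\Wp$ that is Cauchy for $\delta_{1,p}^C$ but has no limit; this is the manifold counterpart of Chiron's scalar example \cite{chiron}*{lemma~2}. The mechanism to exploit is that $\delta_{1,p}^C$ controls only the modulus $\abs{Du}_{\gF}$ together with convergence in measure of the maps, and \emph{not} the morphism $Du$ itself: a sequence whose derivatives oscillate in direction while keeping an essentially fixed modulus, and whose underlying maps converge to a constant, will therefore be $\delta_{1,p}^C$-Cauchy and yet cannot converge, since its only possible limit --- the constant --- has vanishing derivative.

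Concretely, I would fix $x_0\in M$ and a chart $\psi:V\subseteq M\to\R^m$ in which a box $\overline Q\subseteq\psi(V)$, with $Q=(0,1)\times B_1^{m-1}$, is available; fix $y_0\in N$ and a geodesic $\gamma:[-1,1]\to N$ with $\gamma(0)=y_0$ and $\abs{\gamma'}_{g_N}\equiv s_0>0$ (so that $d_N(\gamma(a),\gamma(b))\le s_0\abs{a-b}$); and fix a cut-off $\eta\in C^1_c(Q,\R)$ with $0\le\eta\le1$ and $\eta\not\equiv0$. For $\ell\in\N_*$ let $v_\ell(t)=\operatorname{dist}(t,\tfrac1\ell\mathbb{Z})$, a $1$-Lipschitz function with $\norm{v_\ell}_{L^\infty}=\tfrac1{2\ell}$ and $\abs{v_\ell'}=1$ almost everywhere; set $\zeta_\ell=\bigl(\eta\cdot(v_\ell\circ\pi_1)\bigr)\circ\psi$ on $V$ (with $\pi_1(x)=x_1$) and $\zeta_\ell=0$ on $M\setminus\psi^{-1}(\overline Q)$, and let $u_\ell=\gamma\circ\zeta_\ell:M\to N$. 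Each $\zeta_\ell$ is locally Lipschitz with $\norm{\zeta_\ell}_{L^\infty}\le\tfrac1{2\ell}$, hence $u_\ell$ is locally Lipschitz and equal to the constant $y_0$ outside the relatively compact set $\psi^{-1}(\overline Q)$; in particular $u_\ell$ is colocally weakly differentiable, and since $\gamma$ has constant speed, $\abs{Du_\ell}_{\gF}=s_0\abs{D\zeta_\ell}_{g^*_M}$ almost everywhere, a bounded function with relatively compact support, so $u_\ell\in\Wp$.

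Fixing an isometric embedding $\iota:N\to\R^\nu$ (Nash embedding theorem \citelist{\cite{nash1954}\cite{nash1956}}), the first thing to check is that $(u_\ell)_{\ell\in\N_*}$ is Cauchy for $\delta_{1,p}^C$: from $d_N(u_\ell(x),y_0)\le s_0\abs{\zeta_\ell(x)}\le s_0/(2\ell)$ and the Lipschitz continuity of $\iota$ on the compact set $\gamma([-1,1])$ one gets $\iota\circ u_\ell\to\iota(y_0)$ uniformly, the maps coinciding outside $\psi^{-1}(\overline Q)$, so $(\iota\circ u_\ell)_\ell$ is Cauchy in $L^p(M,\R^\nu)$; and writing $D\zeta_\ell=(v_\ell\circ\pi_1\circ\psi)\,D(\eta\circ\psi)+(\eta\circ\psi)\,(v_\ell'\circ\pi_1\circ\psi)\,D(\pi_1\circ\psi)$, the first summand tends to $0$ in $L^\infty$ while the second has, by $\abs{v_\ell'}=1$ almost everywhere, the $\ell$-independent modulus $b:=(\eta\circ\psi)\,\abs{D(\pi_1\circ\psi)}_{g^*_M}\not\equiv0$, whence $\abs{D(\iota\circ u_\ell)}_{g^*_M\otimes g_\nu}=\abs{Du_\ell}_{\gF}\to s_0 b=:w$ in $\Lr$ (using proposition~\ref{equi_isometric}); together with $a^p\le(a^2+b^2)^{p/2}\le(a+b)^p$ these bounds give $\lim_{\ell,k\to\infty}\delta_{1,p}^C(u_\ell,u_k)=0$. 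The second thing to check is that $(u_\ell)_\ell$ has no limit: if $u_\ell\to u$ for $\delta_{1,p}^C$ with $u\in\Wp$, then $\iota\circ u_\ell\to\iota\circ u$ and $\abs{D(\iota\circ u_\ell)}_{g^*_M\otimes g_\nu}\to\abs{D(\iota\circ u)}_{g^*_M\otimes g_\nu}$ in $\Lr$; uniqueness of limits in $L^p$ forces $\iota\circ u=\iota(y_0)$, hence $u\equiv y_0$, $Du=0$ and $\abs{D(\iota\circ u)}_{g^*_M\otimes g_\nu}=0$, contradicting the convergence to $w\not\equiv0$. (Alternatively, one may invoke the topological equivalence of $\delta_{1,p}^C$ with the intrinsic Chiron distance and proposition~\ref{corChironEquiv}.) Hence $(\Wp,\delta_{1,p}^C)$ is not complete.

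I do not expect a genuine obstacle. The only point needing a little care is the identity $\abs{Du_\ell}_{\gF}=s_0\abs{D\zeta_\ell}_{g^*_M}$ and its $L^p$ limit: one must track how the non-isometric chart $\psi$ and the isometric, constant-speed geodesic $\gamma$ act on the two factors of $D(\gamma\circ\zeta_\ell)$, and observe that the only $\ell$-dependent ingredient of $D\zeta_\ell$ is $v_\ell'\circ\pi_1\circ\psi$, of modulus $1$ almost everywhere, so that its contribution to $\abs{D\zeta_\ell}_{g^*_M}$ is $\ell$-independent and $\abs{Du_\ell}_{\gF}$ converges in $\Lr$ to the nonzero function $w$. (We have tacitly assumed $\dim M\ge1$, which is what makes the construction possible.)
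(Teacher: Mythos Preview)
Your proof is correct and follows essentially the same approach as the paper: an oscillating sequence \(u_\ell=\gamma\circ\zeta_\ell\) converging to a constant while \(\abs{Du_\ell}_{\gF}\) stays bounded away from zero, so that \((u_\ell)\) is \(\delta_{1,p}^C\)-Cauchy but cannot converge. The paper treats only the model case \(M=(0,1)\), where no cutoff is needed and \(\abs{u_\ell'}\equiv 1\) exactly (making the Cauchy property immediate), and leaves the adaptation to general \(M\) to the reader; your chart-and-cutoff construction is precisely that adaptation, at the cost of having \(\abs{Du_\ell}_{\gF}\) converge to a nonzero limit rather than being constant.
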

\begin{proof}
We give the proof when \(M = (0, 1)\). The reader will adapt the proof to the general case.
Choose \(\gamma \in C^1 ([0,L], N)\) such that for every \(t \in [0, L]\), \(\abs{\gamma' (t)}_{g_N} = 1\) and define, following Chiron \cite{chiron}*{lemma 2}, for every \(\ell \in \N\) the function \(u_\ell : (0, 1) \to N\) for each \(t \in (0, 1)\) by
\begin{equation*}
u_\ell(t) =
\gamma (\operatorname{dist}(t, \mathbb{Z}/n)).
\end{equation*}
For every \(\ell \in \N\), the function \(u_\ell\) is Lipschitz and \( \abs{{u'}_\ell}=1\) almost everywhere in \((0,1)\).
Moreover, the sequence \((u_\ell)_{\ell \in \N}\) converges uniformly to the constant map \(u = \gamma (0)\). 
Since 
\[
  \lim_{\ell \to \infty} \int_0^1 \abs{u_\ell'}^p = 1 \ne 0 = \int_0^1 \abs{u'}^p,
\]
the sequence \((u_\ell)_{\ell \in \N}\) does not converge in \(\dot{W}^{1,p}((0, 1), N)\).
By proposition~\ref{corChironEquiv}, the space \((\dot{W}^{1,p}(M, N), \delta_{1,p}^C)\) is not complete.
\end{proof}

We also always have, in view of the comparison between the Cheeger--Gromoll and Sasaki metrics (lemma~\ref{lemmaComparisonCGS})
\[
  \delta_{1, p}^{CG} (u, v) \le \delta_{1, p}^S (u, v),
\]
that is, the identity is nonexpansive from \((\dot{W}^{1, p} (M, N), \delta_{1, p}^{S})\) to \((\dot{W}^{1, p} (M, N), \delta_{1, p}^{CG})\).
On the other hand, we observe that \(\delta_{1, p}^{S}\) is not uniformly equivalent with \(\delta_{1, p}^{CG}\).

\begin{propo}
\label{not_equivalent_0}
If \(M\) is a nonempty Riemannian manifold \(p \in [1,\infty[\) and \(n\ge 2\), then the identity map 
\[
  i \colon (\dot{W}^{1,p}(M, {\R^n}), \delta_{1,p}^{CG}) \to (\dot{W}^{1,p}(M,\R^n), \delta_{1, p}^{S})
\]
is not uniformly continuous.
\end{propo}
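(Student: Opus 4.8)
The plan is to refute uniform continuity of $i$ by exhibiting sequences $(u_\ell)_{\ell\in\N}$ and $(v_\ell)_{\ell\in\N}$ in $\dot{W}^{1,p}(M,\R^n)$ with $\delta_{1,p}^{CG}(u_\ell,v_\ell)\to 0$ but $\delta_{1,p}^{S}(u_\ell,v_\ell)\ge c_0>0$ for all large $\ell$; this is exactly the negation of uniform continuity. The mechanism, which uses $n\ge 2$, is that one may rotate a linear map into $\R^n$ inside a fixed coordinate $2$-plane without changing its $g^*_M\otimes g_n$-norm, and moving a point of $T^*M\otimes T\R^n$ along such a rotation on the $g^*_M\otimes g_n$-sphere of radius $r$ costs at most $r/\sqrt{1+r^2}\le 1$ for the Cheeger--Gromoll metric (because spherical directions carry the damping factor $(1+g_E(\pi_E(\cdot)))^{-1}$ in the formula for $G^{CG}$) whereas it costs of order $r$ for $G^S$. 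Applied to maps whose derivative has large norm on a set of fixed positive measure, this produces the required discrepancy.

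Concretely, I would fix a chart $\psi\colon V\subseteq M\to\R^m$ with $\overline{B_1}\subset\psi(V)$ ($B_\rho$ the ball of radius $\rho$ in $\R^m$), set $B_0=\psi^{-1}(B_{1/2})$, take $\chi\in C^1_c(V,[0,1])$ with $\chi\equiv 1$ on $B_0$, write $y_1=x_1\circ\psi$, and put, for fixed $R>0$ and $k_\ell\to\infty$,
\[
  u_\ell=\chi\cdot\bigl(\gamma\circ(k_\ell y_1)\bigr)\ \text{on }V,\quad u_\ell=0\ \text{elsewhere},\qquad \gamma(t)=R(\cos t,\sin t,0,\dots,0)\in\R^n,
\]
and $v_\ell=\rho_{\phi_\ell}\circ u_\ell$, where $\rho_{\phi_\ell}$ is the rotation of $\R^n$ by $\phi_\ell=1/k_\ell$ in the $e_1$--$e_2$ plane. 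These are $C^1$ and compactly supported, hence lie in $\dot{W}^{1,p}(M,\R^n)$; moreover $\abs{u_\ell}\le R$ everywhere, $Dv_\ell=\rho_{\phi_\ell}\circ Du_\ell$, the morphism $Du_\ell$ has range in the $e_1$--$e_2$ plane everywhere and is of rank one on $B_0$, and there $\abs{Du_\ell}_{g^*_M\otimes g_n}=k_\ell R\,\abs{dy_1}_{g^*_M}\ge c_1 Rk_\ell$ with $c_1:=\min_{\overline{B_0}}\abs{dy_1}_{g^*_M}>0$.

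For the Cheeger--Gromoll upper bound I would estimate $d^{CG}_{T^*M\otimes TN}(Du_\ell(x),Dv_\ell(x))$ by concatenating two $C^1$ paths: first the fibre path $s\mapsto\rho_{s\phi_\ell}\circ Du_\ell(x)$ over the fixed base point $(x,u_\ell(x))$, whose velocity is $g^*_M\otimes g_n$-orthogonal to the position because the infinitesimal rotation of $\R^n$ is skew-symmetric, so that by the $G^{CG}$ formula its length is $\le\phi_\ell\,\abs{Du_\ell(x)}_{g^*_M\otimes g_n}/\sqrt{1+\abs{Du_\ell(x)}_{g^*_M\otimes g_n}^2}\le\phi_\ell$; then the path carrying the covariantly constant fibre component $\rho_{\phi_\ell}\circ Du_\ell(x)=Dv_\ell(x)$ along the straight segment in $\R^n$ from $u_\ell(x)$ to $v_\ell(x)$, of $G^{CG}$-length $\abs{u_\ell(x)-v_\ell(x)}\le 2R\sin(\phi_\ell/2)$. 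Since $Du_\ell=Dv_\ell=0$ off $\supp\chi$, this gives $\delta_{1,p}^{CG}(u_\ell,v_\ell)\le(1+R)\phi_\ell\,\mu_M(\supp\chi)^{1/p}\to 0$. For the Sasaki lower bound I would use the characterization of proposition~\ref{lemme_sas_equi} together with the flatness of $T\R^n$, for which parallel transport in $T^*M\otimes T\R^n$ acts on the fibre as $e\mapsto e\circ h$ with $h\in O(T_xM)$; discarding the nonnegative length term,
\[
  d^{S}_{T^*M\otimes TN}(Du_\ell(x),Dv_\ell(x))^2 \ge \inf_{h\in O(T_xM)}\abs{Du_\ell(x)-Dv_\ell(x)\circ h}_{g^*_M\otimes g_n}^2 = 4\sin^2(\phi_\ell/2)\,\abs{Du_\ell(x)}_{g^*_M\otimes g_n}^2,
\]
the last equality coming from the rank-one structure on $B_0$ and the fact that $Dv_\ell(x)=\rho_{\phi_\ell}\circ Du_\ell(x)$ with $\rho_{\phi_\ell}$ rotating the $2$-plane containing the range of $Du_\ell(x)$. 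Integrating over $B_0$ yields $\delta_{1,p}^{S}(u_\ell,v_\ell)\ge 2\sin(\phi_\ell/2)\,c_1Rk_\ell\,\mu_M(B_0)^{1/p}$, which stays bounded below by $\tfrac{1}{2}c_1R\,\mu_M(B_0)^{1/p}>0$ for $\ell$ large because $k_\ell\sin(1/(2k_\ell))\to\tfrac{1}{2}$. These two facts give the claim.

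The step that requires genuine care, everything else being bookkeeping with the explicit expressions for $G^S$, $G^{CG}$ and with proposition~\ref{lemme_sas_equi}, is the Cheeger--Gromoll upper bound: one must check that the rotation leg actually stays on a $g^*_M\otimes g_n$-sphere (so that the damping factor $(1+\abs{\cdot}_{g^*_M\otimes g_n}^2)^{-1}$ is activated) and that transporting the fibre component while displacing the base point costs only the $\R^n$-distance between the two values; both use crucially that $T\R^n$ is flat and that the connection on $T^*M\otimes TN$ is metric, and it is precisely here that the hypothesis $n\ge 2$ enters, since for $n=1$ there is no nontrivial rotation of $\R^n$ preserving norms.
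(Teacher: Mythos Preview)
Your proof is correct and complete, but it takes a genuinely different route from the paper's.

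The paper first treats \(M=\R\): it picks any \(u\in C^1_c(\R,\R^n)\setminus\{0\}\), sets \(u_\lambda(t)=u(t/\lambda)\) and \(v_\lambda=-u_\lambda\), and lets the support shrink (\(\lambda\to 0\)). The Cheeger--Gromoll distance between \(u'_\lambda(t)\) and \(-u'_\lambda(t)\) is uniformly bounded (connect the antipodal fibre points by a half great circle; the damping gives cost \(\le\pi\)), so \(\delta^{CG}_{1,p}\lesssim\lambda^{1/p}\to 0\); meanwhile, because \(T^*\R\otimes T\R^n\) with the Sasaki metric is flat Euclidean space, \(d^S\) is computed exactly and \(\delta^S_{1,p}\) stays bounded below (in fact blows up when \(p>1\)). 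The general \(M\) is handled by composing \(u\) with a distance function.

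Your construction keeps the support fixed and instead lets the rotation angle \(\phi_\ell=1/k_\ell\) shrink while the derivative magnitude \(k_\ell R\) grows, so that the product stays of order one. This makes the mechanism---tangential motion on a large-radius fibre sphere costs \(\le\phi_\ell\) for \(G^{CG}\) but \(\sim\phi_\ell\cdot\text{radius}\) for \(G^S\)---fully explicit and quantitative, and handles an arbitrary \(M\) directly in one chart. The price is that your Sasaki lower bound requires the parallel-transport characterisation of proposition~\ref{lemme_sas_equi} together with a holonomy estimate over \(O(T_xM)\), whereas the paper avoids this entirely by first working on the line, where the Sasaki distance is literally Euclidean. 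Both arguments use \(n\ge 2\) at the same place (existence of a nontrivial rotation of \(\R^n\) preserving norms).
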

\begin{proof}
First we consider the case where \(M = \R\). 
We choose a map \(u \in C^1_c (\R, \R^n) \setminus \{0\}\) such that \(\supp u \subset (-1, 1)\), and we define for \(\lambda > 0\) the maps \(u_\lambda : \R \to \R^n\) and \(v_\lambda : \R \to \R^n\) for each \(t \in \R\) by \(u_\lambda (t) = u (t/\lambda)\) and \(v_\lambda (t)= - u_\lambda (t) = -u (t / \lambda)\).
We have for every \(t \in \R\), since \(n \ge 2\) and \(\abs{u_\lambda' (t)} = \abs{v_\lambda' (t)}\), 
\[
  d^{CG} \bigl(u_\lambda' (t), v_\lambda' (t)\bigr)^2
  \le 4\abs{u_\lambda (t)}^2 + \frac{\abs{u_\lambda' (t)}^2\pi^2}{1 + \abs{u_\lambda' (t)}^2} \le 4\abs{u_\lambda (t)}^2 + \pi^2.
\]
Therefore, for every \(\lambda > 0\),
\[
  \delta_{1, p}^{CG} (u_\lambda, v_\lambda) =
  \Bigl(\int_{-\lambda}^{\lambda} d^{CG}_{T^* \R \otimes T\R^n} (u_\lambda', v_\lambda')^p\Bigr)^\frac{1}{p}
  \le \sqrt{4\norm{u}_{L^\infty}^2 +\pi^2} (2\lambda)^\frac{1}{p}.
\]
On the other hand, since \(\R^n\) is flat, we have for each \(\lambda > 0\),
\[
\begin{split}
  \delta_{1, p}^{S} (u_\lambda, v_\lambda) =
  \Bigl(\int_{-\lambda}^{\lambda} d^{S}_{T^* \R \otimes T\R^n} (u_\lambda', v_\lambda')^p \Bigr)^\frac{1}{p}
  &= \Bigl(\int_{-\lambda}^{\lambda} \bigl( \abs{u_\lambda - v_\lambda}^2 + \abs{u_\lambda' - v_\lambda'}^2\bigr)^\frac{p}{2} \Bigr)^\frac{1}{p}  \\
  &=2\Bigl(\int_{-1}^{1} \bigl(\lambda^\frac{2}{p} \abs{u}^2 + \lambda^{-2 \bigl(1 - \frac{1}{p}\bigr)} \abs{u'}^2\bigr)^\frac{p}{2} \Bigr)^\frac{1}{p}.
\end{split}
\]
We have thus 
\begin{align*}
  \lim_{\lambda \to 0} \delta_{1, p}^{CG} (u_\lambda, v_\lambda) &= 0 &
   & \text{ and } &
   \lim_{\lambda \to 0} \delta_{1, p}^{S} (u_\lambda, v_\lambda) &> 0,
\end{align*}
which shows the non uniform continuity.

In the general case, let \(a \in M\) and let \(\rho \in (0, \rho_i(a))\) where \(\rho_i (a)\) is the  injectivity radius of the Riemannian manifold \(M\) at the point \(a\). We define then the maps \(u_\lambda : M \to \R^n\) and \(v_\lambda : M \to \R^n\) for \(\lambda > 0\) and \(x \in M\) by 
\[
  u_\lambda (x) = u \Bigl(\frac{d (x, a) - \rho}{\lambda} \Bigr)
\]
and
\[
  v_\lambda (x) = - u \Bigl(\frac{d (x, a) - \rho}{\lambda} \Bigr);
\]
the non uniform continuity follows as in the case \(M = \R\) treated above.
\end{proof}

Finally we show that the Sasaki distance \(\delta_{1,p}^S\) and the embedding distance \(\delta_{1, p}^\iota\) are not uniformly comparable distances in general.

%EXAMPLE WITH S^n
\begin{propo}\label{not_equivalent}
If \(M\) is a nonempty Riemannian manifold, \(p \in [1,\infty[\) and \(n\ge 2\), then the identity map 
\[
  i \colon (\dot{W}^{1,p}(M, {\mathbb{S}^n}), \delta_{1,p}^S) \to (\dot{W}^{1,p}(M,\mathbb{S}^n), \delta_{1, p}^\iota)
\]
is not uniformly continuous.
\end{propo}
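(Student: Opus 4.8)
The plan is to follow the strategy of the proof of Proposition~\ref{not_equivalent_0}: for each $\lambda>0$ we build two maps $u_\lambda,v_\lambda\in\dot W^{1,p}(M,\mathbb{S}^n)$ that coincide outside a set of volume $O(\lambda)$, that stay inside a fixed geodesic ball around a point $e_0\in\mathbb{S}^n$, and whose gradients concentrate at scale $\lambda$, and then show that $\delta_{1,p}^{S}(u_\lambda,v_\lambda)\to 0$ while $\delta_{1,p}^{\iota}(u_\lambda,v_\lambda)$ stays bounded away from $0$. In the model case $M=\mathbb{R}$, fix a unit vector $\mathbf{w}\in T_{e_0}\mathbb{S}^n$ and a function $\psi\in C^1_c((-1,1))$ with $\psi\not\equiv 0$ and $\norm{\psi}_{L^\infty}\le\pi/3$, and set
\[
  u_\lambda(t)=\exp_{e_0}\bigl(\psi(t/\lambda)\,\mathbf{w}\bigr),\qquad v_\lambda(t)=\exp_{e_0}\bigl(-\psi(t/\lambda)\,\mathbf{w}\bigr);
\]
both maps equal $e_0$ for $\abs{t}\ge\lambda$ and both trace an arc of the great circle through $e_0$ in the direction $\mathbf{w}$. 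For a general manifold $M$ one replaces $t/\lambda$ by $(d_M(x,a)-\rho)/\lambda$ for a point $a\in M$ and $0<\rho<\rho_i(a)$, exactly as in the proof of Proposition~\ref{not_equivalent_0}; then $\{u_\lambda\ne v_\lambda\}$ is the annulus $\{\abs{d_M(\cdot,a)-\rho}<\lambda\}$, of volume $O(\lambda)$, and $u_\lambda$, $v_\lambda$ are $C^1$ maps, constant outside a compact set, hence in $\dot W^{1,p}(M,\mathbb{S}^n)$.

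For the lower bound on $\delta_{1,p}^{\iota}$, I would compute in $\R^{n+1}$, using that by definition $\delta_{1,p}^{\iota}(u_\lambda,v_\lambda)^p=\int_M\bigl(\abs{\iota u_\lambda-\iota v_\lambda}^2+\abs{D(\iota u_\lambda)-D(\iota v_\lambda)}^2\bigr)^{p/2}$. Writing $\psi=\psi(t/\lambda)$, one has $\iota\circ u_\lambda=\cos\psi\,e_0+\sin\psi\,\mathbf{w}$ and $\iota\circ v_\lambda=\cos\psi\,e_0-\sin\psi\,\mathbf{w}$, hence $\abs{\iota u_\lambda-\iota v_\lambda}=2\abs{\sin\psi}$ and $\abs{D(\iota u_\lambda)-D(\iota v_\lambda)}=2\abs{\psi'(t/\lambda)}\,\abs{\cos\psi}/\lambda$, so, after the change of variables $s=t/\lambda$ and using $\cos\psi\ge\tfrac12$,
\[
  \delta_{1,p}^{\iota}(u_\lambda,v_\lambda)^p=\int_{-\lambda}^{\lambda}\Bigl(4\sin^2\psi(t/\lambda)+\frac{4\,\psi'(t/\lambda)^2\cos^2\psi(t/\lambda)}{\lambda^2}\Bigr)^{p/2}dt\ge\lambda^{1-p}\int_{-1}^{1}\abs{\psi'}^p\,ds.
\]
This tends to $+\infty$ when $p>1$ and equals a fixed positive constant when $p=1$; in either case $\liminf_{\lambda\to 0}\delta_{1,p}^{\iota}(u_\lambda,v_\lambda)>0$. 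The general-$M$ computation is identical up to a coarea factor given by the areas of the spheres $\{d_M(\cdot,a)=\rho+\lambda s\}$, which remain bounded away from $0$ as $\lambda\to 0$.

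The crux is the bound $\delta_{1,p}^{S}(u_\lambda,v_\lambda)\to 0$, and this is where the hypothesis $n\ge 2$ enters. Applying the characterization of the Sasaki distance of Proposition~\ref{lemme_sas_equi} to the bundle $T^*M\otimes T\mathbb{S}^n$ over $M\times\mathbb{S}^n$ and restricting to paths $\gamma$ with constant $M$-component, i.e. paths in $\mathbb{S}^n$ from $u_\lambda(x)$ to $v_\lambda(x)$, one obtains
\[
  d^{S}_{T^*M\otimes T\mathbb{S}^n}\bigl(Du_\lambda(x),Dv_\lambda(x)\bigr)^2\le\bigabs{Du_\lambda(x)-(\id\otimes P^\gamma)\bigl(Dv_\lambda(x)\bigr)}_{g^*_M\otimes g_{\mathbb{S}^n}}^2+L_{\mathbb{S}^n}(\gamma)^2,
\]
where $P^\gamma$ is the Levi--Civita parallel transport along $\gamma$. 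Now $Du_\lambda(x)$ and $Dv_\lambda(x)$ factor as $\zeta\otimes\omega$ and $\zeta'\otimes\omega$, with $\omega$ the same covector and $\zeta,\zeta'$ equal scalar multiples (by $\psi'(\cdot)$) of unit tangent vectors --- the velocities of the two great-circle arcs. Join $u_\lambda(x)$ to $e_0$ and $e_0$ to $v_\lambda(x)$ by minimizing geodesics (total length $\le 2\norm{\psi}_{L^\infty}$), and append at $v_\lambda(x)$ a loop of length $\le 4\pi$ --- realized as holonomy around a geodesic circle of a totally geodesic $2$-sphere through $v_\lambda(x)$, which exists precisely because $n\ge 2$ --- whose holonomy rotates the transported unit direction of $\zeta'$ onto the unit direction of $\zeta$. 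Along the resulting path $\gamma$ one has $(\id\otimes P^\gamma)(Dv_\lambda(x))=Du_\lambda(x)$ \emph{exactly}, because parallel transport is an isometry and only the two unit directions need to match --- the large common norm of the derivatives is irrelevant. Hence $d^{S}_{T^*M\otimes T\mathbb{S}^n}(Du_\lambda(x),Dv_\lambda(x))\le 2\norm{\psi}_{L^\infty}+4\pi=:C$ on $\{u_\lambda\ne v_\lambda\}$ and vanishes elsewhere, so $\delta_{1,p}^{S}(u_\lambda,v_\lambda)^p$ is at most $C^p$ times the volume of $\{u_\lambda\ne v_\lambda\}$, which is $O(\lambda)$; in particular $\delta_{1,p}^{S}(u_\lambda,v_\lambda)\to 0$. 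Combined with the previous paragraph this produces, for some $\varepsilon_0>0$, pairs $u_\lambda,v_\lambda$ with $\delta_{1,p}^{S}(u_\lambda,v_\lambda)$ arbitrarily small but $\delta_{1,p}^{\iota}(u_\lambda,v_\lambda)\ge\varepsilon_0$, so $i$ is not uniformly continuous. The only genuinely nonroutine ingredient is the bounded-length parallel transport used in the last step; everything else is the scaling estimate already carried out for Proposition~\ref{not_equivalent_0}.
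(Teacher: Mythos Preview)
Your proof is correct and follows essentially the same strategy as the paper's: both build rescaled pairs \(u_\lambda, v_\lambda\) supported on a set of measure \(O(\lambda)\) and related by an isometry of \(\mathbb{S}^n\) fixing a point (in your case the reflection sending \(\mathbf{w}\mapsto -\mathbf{w}\) at \(e_0\)), so that the derivatives have equal norm and can be matched by parallel transport along a path in \(\mathbb{S}^n\) of length bounded independently of \(\lambda\); this forces \(\delta_{1,p}^{S}(u_\lambda,v_\lambda)\to 0\) while the blow-up of the gradients makes \(\delta_{1,p}^{\iota}(u_\lambda,v_\lambda)\) stay bounded below. Your explicit holonomy-loop construction on a totally geodesic \(2\)-sphere in fact spells out the step the paper states without proof (namely that for \(n\ge 2\) any two tangent vectors of equal length are related by parallel transport along a path of uniformly bounded length).
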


\begin{proof}
We begin by considering the case \(M = \R\).
Choose \(y \in \mathbb{S}^n\) and \(\rho \colon \mathbb{S}^n \to \mathbb{S}^n\) be a non-identical isometry such that \(\rho(y) = y\) and \(u \in C^1 (\R, \mathbb{S}^n)\) such that for every \(x\in \R \setminus [-1,1]\), \(\rho (u(x))= y\) and \(\rho (u (x))\ne u(x)\) in \((-1, 1)\). 

Let \(u_\lambda : \R \to \mathbb{S}^n\) be defined for every \(t \in \R\) by \(u_\lambda (t) = u(t/\lambda)\) and let \(v_\lambda = \rho \circ u_\lambda\).
Since \(\rho\) is an isometry, \(D\rho \colon T\mathbb{S}^n \to T\mathbb{S}^n\) is an isometry on tangent vectors. 
Since \(n \ge 2\), for every \(e \in T\mathbb{S}^n\) there exists a path \(\gamma\) such that \(P^\gamma (e) = D \rho (e)\); the length of \(\gamma\) with respect to the Sasaki metric \(G^S\) can be bounded uniformly by \(2 \pi\).
Therefore, we have, for every \(e \in T\mathbb{S}^n\), by proposition~\ref{lemme_sas_equi}, 
\[
  d_{T\mathbb{S}^n}^S (e, D \rho(e)) \le 2 \pi.
\]
We deduce that 
\begin{align*}
  \delta_{1,p}^S(u_\lambda, v_\lambda) & = \Bigl(\int_{\R} d^S_{T^* \R\otimes T\mathbb{S}^n}(D u_\lambda, D v_\lambda)^p\Bigr)^\frac{1}{p}\le 2 \pi\, \lambda^\frac{1}{p}.
\end{align*}
On the other hand, 
\[
\delta_{1, p}^\iota (u_\lambda, v_\lambda)
= \Bigl(\int_{\R} \lambda \bigl(\abs{u - \rho \circ u}^2 + \lambda^{-2} \abs{D u - D (\rho \circ u)}^2 \bigr)^\frac{p}{2}\Bigr)^\frac{1}{p},
\]
Consequently, \(\lim_{\lambda \to 0} \delta_{1,p}^S(u_\lambda, v_\lambda) =0\)
while \(\liminf_{\lambda \to 0} \delta_{1, p}^\iota (u_\lambda, v_\lambda) = \infty\).

In the general case, we conclude as in the proof of proposition~\ref{not_equivalent_0}.
\end{proof}

\begin{propo}
If \(M\) is a nonempty Riemannian manifold and \(p\in [1, \dim M[\), then the identity embedding map 
\[
  i \colon(\dot{W}^{1,p}(M,\mathbb{S}^1), \delta_{1, p}^\iota) \to (\dot{W}^{1,p}(M, \mathbb{S}^1), \delta_{1,p}^S).
\]
is not uniformly continuous.
\end{propo}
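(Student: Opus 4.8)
The idea is that near a point, $\mathbb{S}^1$–valued maps that oscillate about a pair of antipodal points of $\mathbb{S}^1$ are cheap for $\delta_{1,p}^\iota$ — at two nearly antipodal points of $\mathbb{S}^1\subset\R^2$ the tangent lines are nearly parallel, so the embedded derivatives nearly cancel in opposite pairs — but are expensive for $\delta_{1,p}^S$, because the two base points stay at intrinsic distance $\approx\pi$ and, on a flat chart, the arc–length derivatives genuinely point in opposite directions. The hypothesis $p<\dim M$ is exactly what lets such an oscillation be concentrated on balls whose radius tends to $0$ while the Sasaki energy stays bounded below.

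The plan is to treat $M=\R^k$, $k=\dim M>p$, first. Fix $\chi,\eta\in C^\infty_c(\R^k)$ with $\chi=1$ on $B_1$, $\supp\chi\subset B_2$, $\eta=1$ on $B_{1/2}$, $\supp\eta\subset B_1$, and a nonconstant $1$–periodic $h\in C^\infty(\R)$. For $\lambda\in(0,1)$ put $\nu_\lambda=\lambda^{1+k/p}$ and
\[
 \phi_\lambda(x)=\pi\,\chi(x/\lambda)+\lambda\,\eta(x/\lambda)\,h(x_1/\nu_\lambda),
\]
and then $u_\lambda=\bigl(\cos\tfrac{\phi_\lambda}{2},\sin\tfrac{\phi_\lambda}{2}\bigr)$, $v_\lambda=\bigl(\cos\tfrac{\phi_\lambda}{2},-\sin\tfrac{\phi_\lambda}{2}\bigr)$. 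These maps lie in $\dot W^{1,p}(\R^k,\mathbb{S}^1)$, equal the constant $(1,0)$ outside $B_{2\lambda}$, and are exactly antipodal on $B_{\lambda/2}$, where $\phi_\lambda=\pi+\lambda h(x_1/\nu_\lambda)$. I would then record the pointwise identities $\abs{\iota u_\lambda-\iota v_\lambda}=2\abs{\sin(\phi_\lambda/2)}$ and $\abs{D(\iota u_\lambda)-D(\iota v_\lambda)}_{g^*_M\otimes g_2}=\abs{D\phi_\lambda}\,\abs{\cos(\phi_\lambda/2)}$, while $\abs{Du_\lambda-Dv_\lambda}=\abs{D\phi_\lambda}$ in the arc–length trivialisation of $T\mathbb{S}^1$, so that by proposition~\ref{lemme_sas_equi} and flatness of $\R^k$ one gets $d^S_{T^*M\otimes T\mathbb{S}^1}(Du_\lambda,Dv_\lambda)\ge\abs{D\phi_\lambda}$ pointwise.

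Three estimates then finish the Euclidean case. First, $\delta_{1,p}^S(u_\lambda,v_\lambda)^p\ge\int_{B_{\lambda/2}}\abs{D\phi_\lambda}^p$; since $\abs{D\phi_\lambda}=\lambda^{-k/p}\abs{h'(x_1/\nu_\lambda)}$ there, the substitution $x\mapsto x/\nu_\lambda$ and the positivity of the mean of $\abs{h'}^p$ give a lower bound independent of $\lambda$ (the exponents balance because $\nu_\lambda^k(\lambda/\nu_\lambda)^{k+p}=\lambda^{k+p}\nu_\lambda^{-p}=1$). Secondly, on $B_\lambda$ one has $\abs{\phi_\lambda-\pi}\le\lambda$, hence $\abs{\cos(\phi_\lambda/2)}\le\lambda/2$ and $\abs{D\phi_\lambda}\lesssim\lambda^{-k/p}$, so the $\delta_{1,p}^\iota$–integrand is $\lesssim(4+\lambda^{2-2k/p})^{p/2}\lesssim\lambda^{p-k}$ and contributes $\lesssim\lambda^k\lambda^{p-k}=\lambda^p\to0$; on $B_{2\lambda}\setminus B_\lambda$, $\phi_\lambda=\pi\chi(x/\lambda)$ with $\abs{D\phi_\lambda}\lesssim1/\lambda$, so the contribution is $\lesssim\lambda^k\lambda^{-p}=\lambda^{k-p}\to0$ since $p<k$. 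Thus $\delta_{1,p}^\iota(u_\lambda,v_\lambda)\to0$ while $\delta_{1,p}^S(u_\lambda,v_\lambda)$ stays bounded below, contradicting uniform continuity of $i$.

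For general Riemannian $M$ with $k=\dim M>p$, I would transplant the construction to geodesic normal coordinates on a ball $B^M_{r_0}(a_0)$ below the injectivity radius and extend $u_\lambda,v_\lambda$ by a constant, exactly as in the proof of proposition~\ref{not_equivalent_0}; comparing $g_M$ with the Euclidean metric changes the estimates only by harmless constants. The step I expect to be the main obstacle is the uniform lower bound $\delta_{1,p}^S(u_\lambda,v_\lambda)\ge c>0$ in the curved setting: a competing path in proposition~\ref{lemme_sas_equi} may move in the base $M$ and use the parallel transport in $T^*M$, so $d^S_{T^*M\otimes T\mathbb{S}^1}(Du_\lambda(x),Dv_\lambda(x))$ is no longer controlled from below by $\abs{Du_\lambda(x)-Dv_\lambda(x)}$; one must combine the base–point bound $d^S\ge d_{\mathbb{S}^1}(u_\lambda,v_\lambda)=\pi$ on $B_{\lambda/2}$ with a quantitative estimate on how much such a detour in $M$ can save, which is where the geometry of $M$ genuinely enters and where the argument needs the most care.
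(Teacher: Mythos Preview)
Your construction is essentially the paper's: both build \(\mathbb{S}^1\)-valued pairs \(u_\lambda,v_\lambda\) from an angle function that is constant outside a shrinking ball and oscillates rapidly inside, arranged so that the embedded derivatives nearly cancel (forcing \(\delta_{1,p}^\iota\to 0\)) while the intrinsic ones do not. The differences---your reflection across the horizontal axis versus the paper's across the vertical, your one-directional oscillation \(h(x_1/\nu_\lambda)\) versus the paper's radial sine, your Euclidean-then-transplant scheme versus the paper's direct construction via \(d(\cdot,y)\)---are cosmetic; the scaling \(\nu_\lambda=\lambda^{1+k/p}\) matches the paper's exponent \(1+\dim M/p\), and your estimates for \(\delta_{1,p}^\iota\to 0\) and for \(\int_{B_{\lambda/2}}\abs{D\phi_\lambda}^p\ge c>0\) are correct.

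The obstacle you flag in the curved case is genuine, and the paper does not resolve it either: it simply \emph{asserts} the identity
\[
  \delta_{1,p}^{S}(u_\lambda,v_\lambda)=2\Bigl(\int_{M}\bigl(\cos(\varphi_\lambda)^2+\abs{D\varphi_\lambda}^2\bigr)^{p/2}\Bigr)^{1/p},
\]
which is precisely the flat-\(M\) computation, with parallel transport in \(T^*M\) taken to be trivial. The holonomy shortcut you anticipate is real. Already on \(M=\mathbb{S}^2\), a loop at \(x\) enclosing spherical area \(\pi\) has holonomy \(-\mathrm{id}\) on \(T^*_xM\) and hence sends \(D\varphi_\lambda(x)\) to \(-D\varphi_\lambda(x)\); combining it with the \(\mathbb{S}^1\)-geodesic from \(v_\lambda(x)\) to \(u_\lambda(x)\) gives, via proposition~\ref{lemme_sas_equi}, \(d^S_{T^*M\otimes T\mathbb{S}^1}(Du_\lambda(x),Dv_\lambda(x))\le 2\pi\) uniformly in \(\lambda\). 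Then \(\delta_{1,p}^S(u_\lambda,v_\lambda)\lesssim\mathrm{vol}(B_{2\lambda})^{1/p}\to 0\), and the lower bound collapses for \emph{both} constructions. So your Euclidean argument is complete, your instinct that the transplant is where the argument needs the most care is exactly right, and the paper's own proof shares this gap rather than closing it.
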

\begin{proof}
Fix \(y \in M\) and define
\[
  \varphi_\lambda (x) =
  \begin{cases}
    \frac{\pi}{2} & \text{if \(d (x, y) \ge 2\lambda\)},\\
    \frac{d (x, y) - \lambda}{2\lambda} \pi & \text{if \(\lambda \le d (x, y) < 2\lambda\)},\\
    \lambda \sin \frac{(d (x, y) - \lambda)\pi}{2\lambda^{1 + \frac{\dim M}{p}}} & \text{if \(d (x, y) < \lambda \)}.
  \end{cases}
\]
We define now the maps \(u_\lambda = (\cos (\varphi_\lambda), \sin (\varphi_{\lambda}))\) and \(v_\lambda = ({-\cos (\varphi_\lambda)}, \sin (\varphi_{\lambda}))\). 
We observe that for every \(\lambda > 0\)
\[
  \delta_{1, p}^{S} (u_\lambda, v_\lambda)
  =2\Bigl(\int_{M} \bigl(\cos (\varphi_\lambda)^2 + \abs{D \varphi_\lambda}^2\bigr)^{\frac{p}{2}}\Bigr)^\frac{1}{p}
\]
and thus 
\[
  \lim_{\lambda \to 0} \delta_{1, p}^{S} (u_\lambda, v_\lambda)
  > 0.
\]
On the other hand, since for each \(\lambda > 0\),
\[
\begin{split}
  \delta_{1, p}^{\iota} (u_\lambda, v_\lambda)
  &=2\Bigl(\int_{M} \bigl(\cos (\varphi_\lambda)^2 + \sin (\varphi_\lambda)^2 \abs{D \varphi_\lambda}^2\bigr)^{\frac{p}{2}}\Bigr)^\frac{1}{p}\\
  &\le 2\Bigl(\int_{M} \bigl(\cos (\varphi_\lambda)^2 + \abs{\varphi_\lambda}^2 \abs{D \varphi_\lambda}^2\bigr)^{\frac{p}{2}}\Bigr)^\frac{1}{p}.
\end{split}
\]
we have therefore, since \(p < \dim M\),
\[
  \lim_{\lambda \to 0} \delta^{\iota}_{1, p} (u_\lambda, v_\lambda) = 0.\qedhere
\]
\end{proof}

%-------------------------BIBLIO
\begin{bibdiv}
%\addcontentsline{toc}{chapter}{Bibliographie}
\begin{biblist}

\bib{adachi}{book}{
   author={Adachi, Masahisa},
   title={Embeddings and immersions},
   series={Translations of Mathematical Monographs},
   volume={124},
   translator={Hudson, Kiki},
   publisher={American Mathematical Society},
   place={Providence, RI},
   date={1993},
   pages={x+183},
   isbn={0-8218-4612-4},
}

\bib{ambro}{article}{
   author={Ambrosio, Luigi},
   title={Metric space valued functions of bounded variation},
   journal={Ann. Scuola Norm. Sup. Pisa Cl. Sci. (4)},
   volume={17},
   date={1990},
   number={3},
   pages={439--478},
   %issn={0391-173X},
   %review={\MR{1079985 (92d:26022)}},
}

\bib{ambrosio}{article}{
   author={Ambrosio, L.},
   author={Dal Maso, G.},
   title={A general chain rule for distributional derivatives},
   journal={Proc. Amer. Math. Soc.},
   volume={108},
   date={1990},
   number={3},
   pages={691--702},
   %issn={0002-9939},
%   review={\MR{969514 (90j:26019)}},
%   doi={10.2307/2047789},
}

\bib{aflr}{article}{
   author={Azagra, D.},
   author={Ferrera, J.},
   author={L{\'o}pez-Mesas, F.},
   author={Rangel, Y.},
   title={Smooth approximation of Lipschitz functions on Riemannian
   manifolds},
   journal={J. Math. Anal. Appl.},
   volume={326},
   date={2007},
   number={2},
   pages={1370--1378},
   issn={0022-247X},
}

\bib{balder}{article}{
   author={Balder, Erik J.},
   title={On weak convergence implying strong convergence in \(L_1\)-spaces},
   journal={Bull. Austral. Math. Soc.},
   volume={33},
   date={1986},
   number={3},
   pages={363--368},
   issn={0004-9727},
}

\bib{ball}{article}{
   author={Ball, John M.},
   author={Zarnescu, Arghir},
   title={Orientability and energy minimization in liquid crystal models},
   journal={Arch. Ration. Mech. Anal.},
   volume={202},
   date={2011},
   number={2},
   pages={493--535},
   %issn={0003-9527},
   %review={\MR{2847533}},
   %doi={10.1007/s00205-011-0421-3},
}

\bib{bbggpv}{article}{
   author={B\'enilan, Philippe},
   author={Boccardo, Lucio},
   author={Gallou\"et, Thierry},
   author={Gariepy, Ron},
   author={Pierre, Michel},
   author={Vazquez, Juan Luis},
   title={An \(L^1\) theory of existence and uniqueness of solutions of nonlinear elliptic equations},
   journal={Ann. Scuola Norm. Sup. Pisa Cl. Sci. 4},
   volume={22},
   number={2},
   year={1995},
   pages={241--273},
}

\bib{blw}{article}{
   author={Benyounes, M.},
   author={Loubeau, E.},
   author={Wood, C. M.},
   title={Harmonic sections of Riemannian vector bundles, and metrics of
   Cheeger-Gromoll type},
   journal={Differential Geom. Appl.},
   volume={25},
   date={2007},
   number={3},
   pages={322--334},
   %issn={0926-2245},
   %review={\MR{2330461 (2008e:53118)}},
   %doi={10.1016/j.difgeo.2006.11.010},
}

\bib{bethuel}{article}{
   author={Bethuel, Fabrice},
   title={The approximation problem for Sobolev maps between two manifolds},
   journal={Acta Math.},
   volume={167},
   date={1991},
%   number={3-4},
   pages={153--206},
%   issn={0001-5962},
%    review={\MR{1120602 (92f:58023)}},
%    doi={10.1007/BF02392449},
}

\bib{bogachev}{book}{
   author={Bogachev, V. I.},
   title={Measure theory},
   publisher={Springer},
   place={Berlin},
   date={2007},
   pages={Vol. I: xviii+500 pp., Vol. II: xiv+575},
   %isbn={978-3-540-34513-8},
   %isbn={3-540-34513-2},
   %review={\MR{2267655 (2008g:28002)}},
   %doi={10.1007/978-3-540-34514-5},
}

\bib{brezis_2003}{article}{
   author={Brezis, Haim},
   title={The interplay between analysis and topology in some nonlinear PDE
   problems},
   journal={Bull. Amer. Math. Soc. (N.S.)},
   volume={40},
   date={2003},
   number={2},
   pages={179--201}, %(electronic)
   %issn={0273-0979},
   %review={\MR{1962295 (2004a:35038)}},
   %doi={10.1090/S0273-0979-03-00976-5},
}
\bib{brezis}{book}{
   author={Brezis, Haim},
   title={Functional analysis, Sobolev spaces and partial differential
   equations},
   series={Universitext},
   publisher={Springer},
   place={New York},
   date={2011},
   %pages={xiv+599},
   %isbn={978-0-387-70913-0},
   %review={\MR{2759829 (2012a:35002)}},
}

\bib{breziscoronlieb}{article}{
   author={Brezis, Ha{\"{\i}}m},
   author={Coron, Jean-Michel},
   author={Lieb, Elliott H.},
   title={Harmonic maps with defects},
   journal={Comm. Math. Phys.},
   volume={107},
   date={1986},
   number={4},
   pages={649--705},
   %issn={0010-3616},
   %review={\MR{868739 (88e:58023)}},
}

\bib{cheeger_gromoll}{article}{
   author={Cheeger, Jeff},
   author={Gromoll, Detlef},
   title={On the structure of complete manifolds of nonnegative curvature},
   journal={Ann. of Math. (2)},
   volume={96},
   date={1972},
   pages={413--443},
   issn={0003-486X},
}

\bib{chiron}{article}{
   author={Chiron, David},
   title={On the definitions of Sobolev and BV spaces into singular spaces
   and the trace problem},
   journal={Commun. Contemp. Math.},
   volume={9},
   date={2007},
   number={4},
   pages={473--513},
   issn={0219-1997},
   %review={\MR{2348841 (2008k:46092)}},
   %doi={10.1142/S0219199707002502},
}

\bib{dacorognafonsecamalytrivisa}{article}{
   author={Dacorogna, B.},
   author={Fonseca, I.},
   author={Mal{\'y}, J.},
   author={Trivisa, K.},
   title={Manifold constrained variational problems},
   journal={Calc. Var. Partial Differential Equations},
   volume={9},
   date={1999},
   number={3},
   pages={185--206},
   issn={0944-2669},
}

\bib{derham}{book}{
   author={de Rham, Georges},
   title={Differentiable manifolds},
   series={Grundlehren der Mathematischen Wissenschaften },
   volume={266},
   subtitle={Forms, currents, harmonic forms},
   publisher={Springer},
   place={Berlin},
   date={1984},
   pages={x+167},
   isbn={3-540-13463-8},
%    review={\MR{760450 (85m:58005)}},
%    doi={10.1007/978-3-642-61752-2},
}

\bib{docarmo}{book}{
   author = {do Carmo, Manfredo},
   title = {Riemannian geometry},
   translator = {Flaherty, Francis},
   publisher = {Birkha\"user},
   address = {Boston, Mass.},
   date = {1992},
   series = {Mathematics: Theory and Applications},
}

\bib{eels_lemaire}{article}{
  author = {Eels, J.},
  author = {Lemaire, J.},
  title = {A report on harmonic maps},
  journal = {Bull. London Math. Soc.},
  volume = {10},
  date = {1978},
  pages = {1--68},
}

\bib{evans_gariepy}{book}{
   author = {Evans, Lawrence C.},
   author = {Gariepy, Ronald F.},
   title = {Measure theory and fine properties of functions},
   date = {1992},
   publisher = {CRC Press},
   address = {Boca Raton, Florida},
}

\bib{federer}{book}{
   author = {Federer, Herbert},
   title = {Geometric measure theory},
   date = {1969},
   publisher = {Springer},
   address = {Berlin -- Heidelberg -- New York},
}

\bib{focardi_spadaro}{article}{
   author={Focardi, Matteo},
   author={Spadaro, Emanuele},
   title={An intrinsic approach to manifold constrained variational
   problems},
   journal={Ann. Mat. Pura Appl. (4)},
   volume={192},
   date={2013},
   number={1},
   pages={145--163},
   %issn={0373-3114},
}

\bib{giaquintamodicasoucek1989}{article}{
   author={Giaquinta, Mariano},
   author={Modica, Giuseppe},
   author={Sou{\v{c}}ek, Ji{\v{r}}{\'{\i}}},
   title={Cartesian currents and variational problems for mappings into
   spheres},
   journal={Ann. Scuola Norm. Sup. Pisa Cl. Sci. (4)},
   volume={16},
   date={1989},
   number={3},
   pages={393--485 (1990)},
   issn={0391-173X},
}

\bib{giaquintamodicasoucek1990}{article}{
   author={Giaquinta, Mariano},
   author={Modica, Giuseppe},
   author={Sou{\v{c}}ek, Ji{\v{r}}{\'{\i}}},
   title={Liquid crystals: relaxed energies, dipoles, singular lines and
   singular points},
   journal={Ann. Scuola Norm. Sup. Pisa Cl. Sci. (4)},
   volume={17},
   date={1990},
   number={3},
   pages={415--437},
   issn={0391-173X},
}

\bib{giaquintamodicasoucek}{book}{
  author={Giaquinta, Mariano},
   author={Modica, Giuseppe},
   author={Sou{\v{c}}ek, Ji{\v{r}}{\'{\i}}},
   title={Cartesian currents in the calculus of variations. I},
   part={I},
     subtitle={Cartesian currents},
    series={Ergebnisse der Mathematik und ihrer Grenzgebiete. 3. Folge.},
    volume={37},
    publisher={Springer},
    place={Berlin},
    date={1998},
    pages={xxiv+711},
   isbn={3-540-64009-6},
}

\bib{giaquintamodicasoucek2}{book}{
   author={Giaquinta, Mariano},
   author={Modica, Giuseppe},
   author={Sou{\v{c}}ek, Ji{\v{r}}{\'{\i}}},
   title={Cartesian currents in the calculus of variations},
   part={II},
   series={Ergebnisse der Mathematik und ihrer Grenzgebiete. 3. Folge.},
   volume={38},
   subtitle={Variational integrals},
   publisher={Springer},
   place={Berlin},
   date={1998},
}

\bib{greene_wu}{article}{
   author={Greene, R. E.},
   author={Wu, H.},
   title={Integrals of subharmonic functions on manifolds of nonnegative
   curvature},
   journal={Invent. Math.},
   volume={27},
   date={1974},
   pages={265--298},
   %issn={0020-9910},
   %review={\MR{0382723 (52 \#3605)}},
}
\bib{gud_kappos}{article}{
   author={Gudmundsson, Sigmundur},
   author={Kappos, Elias},
   title={On the geometry of tangent bundles},
   journal={Expo. Math.},
   volume={20},
   date={2002},
   number={1},
   pages={1--41},
%		issn={0723-0869},
%   review={\MR{1888866 (2002m:53069)}},
%   doi={10.1016/S0723-0869(02)80027-5},
}

\bib{hajlasz_tyson}{article}{
   author={Haj{\l}asz, Piotr},
   author={Tyson, Jeremy T.},
   title={Sobolev Peano cubes},
   journal={Michigan Math. J.},
   volume={56},
   date={2008},
   number={3},
   pages={687--702},
%   issn={0026-2285},
%   review={\MR{2490654 (2010c:46081)}},
%   doi={10.1307/mmj/1231770368},
}
\bib{hang}{article}{
   author={Hang, Fengbo},
   author={Lin, Fanghua},
   title={Topology of Sobolev mappings. II},
   journal={Acta Math.},
   volume={191},
   date={2003},
   number={1},
   pages={55--107},
   %issn={0001-5962},
   %review={\MR{2020419 (2005m:58023)}},
   %doi={10.1007/BF02392696},
}
\bib{hardt}{article}{
   author={Hardt, Robert},
   author={Kinderlehrer, David},
   author={Lin, Fang-Hua},
   title={Existence and partial regularity of static liquid crystal
   configurations},
   journal={Comm. Math. Phys.},
   volume={105},
   date={1986},
   number={4},
   pages={547--570},
   %issn={0010-3616},
   %review={\MR{852090 (88a:35207)}},
}

\bib{hirsch}{book}{
   author={Hirsch, Morris W.},
   title={Differential topology},
   series={Graduate Texts in Mathematics}, 
   volume={33},
   publisher={Springer},
   place={New York},
   date={1976},
   pages={x+221},
}

\bib{hormander}{book}{
   author={H{\"o}rmander, Lars},
   title={The analysis of linear partial differential operators},
   part = {I},
   series={Springer Study Edition},
   edition={2},
   subtitle={Distribution theory and Fourier analysis},
   publisher={Springer},
   place={Berlin},
   date={1990},
}

\bib{jost}{article}{
   author={Jost, J{\"u}rgen},
   title={Equilibrium maps between metric spaces},
   journal={Calc. Var. Partial Differential Equations},
   volume={2},
   date={1994},
   number={2},
   pages={173--204},
   %issn={0944-2669},
   %review={\MR{1385525 (98a:58049)}},
   %doi={10.1007/BF01191341},
}

\bib{kappos}{thesis}{
   author = {Kappos, Elias},
   title = {Natural metrics on tangent bundles},
   date = {2001},
   organization = {Lund University},
%    serial = {2001:E10},
   type = {Masters thesis},
}

\bib{katokhasselblatt}{book}{
   author={Katok, Anatole},
   author={Hasselblatt, Boris},
   title={Introduction to the modern theory of dynamical systems},
   series={Encyclopedia of Mathematics and its Applications},
   volume={54},
   note={With a supplementary chapter by Katok and Leonardo Mendoza},
   publisher={Cambridge University Press},
   place={Cambridge},
   date={1995},
   pages={xviii+802},
}

\bib{korevaar}{article}{
   author={Korevaar, Nicholas J.},
   author={Schoen, Richard M.},
   title={Sobolev spaces and harmonic maps for metric space targets},
   journal={Comm. Anal. Geom.},
   volume={1},
   date={1993},
   number={3-4},
   pages={561--659},
   %issn={1019-8385},
   %review={\MR{1266480 (95b:58043)}},
}

\bib{lieb_loss}{book}{
   author={Lieb, Elliott H.},
   author={Loss, Michael},
   title={Analysis},
   series={Graduate Studies in Mathematics},
   volume={14},
   edition={2},
   publisher={American Mathematical Society},
   place={Providence, RI},
   date={2001},
   pages={xxii+346},
   isbn={0-8218-2783-9},
}

\bib{linwang}{book}{
   author={Lin, Fanghua},
   author={Wang, Changyou},
   title={The analysis of harmonic maps and their heat flows},
   publisher={World Scientific Publishing Co. Pte. Ltd., Hackensack, NJ},
   date={2008},
   pages={xii+267},
   isbn={978-981-277-952-6},
   isbn={981-277-952-3},
}

\bib{mucci}{article}{
   author={Mucci, Domenico},
   title={Maps into projective spaces: liquid crystal and conformal
   energies},
   journal={Discrete Contin. Dyn. Syst. Ser. B},
   volume={17},
   date={2012},
   number={2},
   pages={597--635},
   issn={1531-3492},
}

\bib{muller}{article}{
   author={M{\"u}ller, Olaf},
   title={A note on closed isometric embeddings},
   journal={J. Math. Anal. Appl.},
   volume={349},
   date={2009},
   number={1},
   pages={297--298},
   issn={0022-247X},
}
\bib{munkres}{book}{
   author={Munkres, James R.},
   title = {Topology},
   edition = {2},
   publisher={Prenctice Hall},
   address={Upper Saddle River, New Jersey},
   date={2000},
}
\bib{nash1954}{article}{
   author={Nash, John},
   title={\(C^1\) isometric imbeddings},
   journal={Ann. of Math. (2)},
   volume={60},
   date={1954},
   pages={383--396},
   issn={0003-486X},
}
\bib{nash1956}{article}{
   author={Nash, John},
   title={The imbedding problem for Riemannian manifolds},
   journal={Ann. of Math. (2)},
   volume={63},
   date={1956},
   pages={20--63},
   issn={0003-486X},
}

\bib{resh_97}{article}{
   author={Reshetnyak, Yu. G.},
   title={Sobolev spacees of functions with values in a metric space},
   language={Russian}, %, with Russian summary
   journal={Sibirsk. Mat. Zh.},
   volume={38},
   date={1997},
   number={3},
   pages={657--675, iii--iv},
   issn={0037-4474},
   translation={
      journal={Siberian Math. J.},
      volume={38},
      date={1997},
      number={3},
      pages={567--583},
      issn={0037-4466},
   },
}

\bib{sasaki}{article}{
   author={Sasaki, Shigeo},
   title={On the differential geometry of tangent bundles of Riemannian
   manifolds. II},
   journal={T\^ohoku Math. J. (2)},
   volume={14},
   date={1962},
   pages={146--155},
}

\bib{schwartz}{book}{
   author={Schwartz, Laurent},
   title={Analyse},
   part={III},
   subtitle={Calcul int\'egral},
   series={Collection Enseignement des Sciences},
   publisher={Hermann},
   place={Paris},
   date={1998},
}
\bib{urakawa}{book}{
   author={Urakawa, Hajime},
   title={Calculus of variations and harmonic maps},
   series={Translations of Mathematical Monographs},
   volume={132},
   publisher={American Mathematical Society},
   place={Providence, RI},
   date={1993},
   pages={xiv+251},
}

\bib{visintin}{article}{
   author={Visintin, A.},
   title={Strong convergence results related to strict convexity},
   journal={Comm. Partial Differential Equations},
   volume={9},
   date={1984},
   number={5},
   pages={439--466},
   issn={0360-5302},
}

\bib{wendl}{book}{
   author={Wendl, C.},
   title={Lectures Notes on Bundles and Connections},
   date={2008},
}

\bib{whitney}{article}{
   author={Whitney, Hassler},
   title={Differentiable manifolds},
   journal={Ann. of Math. (2)},
   volume={37},
   date={1936},
   number={3},
   pages={645--680},
   issn={0003-486X},
}

\bib{willem_en}{book}{
  author = {Willem, Michel},
  title = {Functional analysis},
  subtitle = {Fundamentals and Applications},
  series={Cornerstones},
  publisher = {Birkh\"auser},
  place = {Basel},
  volume = {XIV},
  pages = {213},
  date={2013},
}

\end{biblist}
\end{bibdiv}
\end{document}